\setlist[enumerate,1]{label = \arabic*)}
\newtheorem{theorem}{Theorem}
\newtheorem{definition}[theorem]{Definition}
\newtheorem{proposition}[theorem]{Proposition}
\newtheorem{lemma}[theorem]{Lemma}
\newtheorem{corollary}[theorem]{Corollary}
\newtheorem{conjecture}[theorem]{Conjecture}
\newtheorem{question}[theorem]{Question}
\newtheorem{example}[theorem]{Example}
\theoremstyle{remark}
\newtheorem{remark}[theorem]{Remark}
\numberwithin{theorem}{section}
\title{Representation Theory of General Linear Supergroups in Characteristic 2}
\author{Serina Hu}
\address{Massachusetts Institute of Technology, Cambridge, MA, United States}
\email{serinahu@mit.edu}
\newcommand{\mcal}{\mathcal}
\newcommand{\mfrak}{\mathfrak}
\newcommand{\mbb}{\mathbb}
\newcommand{\1}{\mathds{1}}
\newcommand{\op}{\text{op}}
\newcommand{\inj}{\hookrightarrow}
\newcommand{\on}{\operatorname}
\DeclareMathOperator{\Ver}{Ver}
\DeclareMathOperator{\End}{End}
\DeclareMathOperator{\Hom}{Hom}
\DeclareMathOperator{\im}{im}
\DeclareMathOperator{\CommAlg}{CommAlg}
\DeclareMathOperator{\CommHopfAlg}{CommHopfAlg}
\DeclareMathOperator{\ind}{ind}
\DeclareMathOperator{\Grp}{Grp}
\DeclareMathOperator{\AffGrp}{AffGrp}
\DeclareMathOperator{\Spec}{Spec}
\DeclareMathOperator{\id}{id}
\DeclareMathOperator{\Sym}{Sym}
\DeclareMathOperator{\GL}{GL}
\DeclareMathOperator{\PGL}{PGL}
\DeclareMathOperator{\Rep}{Rep}
\DeclareMathOperator{\SL}{SL}
\DeclareMathOperator{\sVec}{sVec}
\DeclareMathOperator{\Dist}{Dist}
\DeclareMathOperator{\diag}{diag}
\DeclareMathOperator{\hw}{hw}
\DeclareMathOperator{\Irrep}{Irrep}
\DeclareMathOperator{\Aut}{Aut}
\DeclareMathOperator{\Mat}{Mat}
\DeclareMathOperator{\Ext}{Ext}
\DeclareMathOperator{\Fr}{Fr}
\DeclareMathOperator{\Aff}{Aff}
\DeclareMathOperator{\colim}{colim}
\newcommand{\comma}{{,}}
\newcommand{\breakingplus}{%
  \begingroup\lccode`~=`+
  \lowercase{\endgroup\expandafter\def\expandafter~\expandafter{~\penalty0 }}}
\begin{document}

\maketitle

\begin{abstract}
We develop representation theory of general linear groups in the category
$\Ver_4^+$, the simplest tensor category which is not Frobenius exact. Since $\Ver_4^+$
is a reduction of the category of supervector spaces to characteristic $2$
(by a result of Venkatesh), these groups may be viewed as general linear supergroups
in characteristic $2$. More precisely, every object in $\Ver_4^+$ has the form $m\1+nP$ where $P$ is the indecomposable projective, and $\GL(m\1+nP)$ is the reduction to characteristic $2$ of $\GL(m+n|n)$.
We explicitly describe the irreducible representations
of $\GL(P)$ and then use this description to classify the irreducible representations of $\GL(m\1+nP)$ for general $m,n$.
We also define some subgroups of $\GL(m\1+nP)$ and classify their irreducible representations. Finally,
we conjecture a Steinberg tensor product theorem for $\Ver_4^+$ involving the square of the
Frobenius map.
\end{abstract}

\tableofcontents

\section{Introduction}

One of the most important problems in representation theory is understanding the category of polynomial representations of the general linear group $\GL(n)$, as well as the limit as $n\to \infty$ of these categories  -- the category of polynomial functors (\cite{macdonald_symmetric_1995},\cite{friedlander_cohomology_1997}). While over algebraically closed fields of characteristic zero representations of $GL(n)$ are fully understood through the classical work of Schur and Weyl, in positive characteristic the situation is much more complicated, and even computing the dimensions of irreducible representations is very difficult in general. However, the classification of irreducible representations is characteristic free - they are classified by highest weights $(\lambda_1,...,\lambda_n)$ where $\lambda_1\ge...\ge \lambda_n$, $\lambda_i\in \mathbb Z$ (\cite{jantzen_algebraic_groups_2003}). The story is similar for the general linear supergroups $\GL(m|n)$ and the corresponding categories of superpolynomial functors (\cite{axtell_superpolynomial_2013},\cite{drupieski_cohomology_2016}) - the category of representations of $\GL(m|n)$ is rather complicated already in characteristic zero and much more so in positive characteristic, but the classification of irreducible representations by highest weights is available and uniform in all characteristics (\cite{cheng_wang_superalgebras_2012}). 

Recently there has been significant interest in generalizations of this theory to symmetric tensor categories $\mathcal C$ of moderate growth, i.e., studying the representation category of the affine group scheme $\GL(X)$ for $X\in\mathcal C$ and the corresponding category of polynomial functors obtained by taking an appropriate limit with respect to $X$ (see e.g. \cite{venkatesh_representations_2022},\cite{coulembier_inductive_2024}). It is easy to show that if $F: \mathcal C\to \mathcal D$ is a tensor functor then the representation category of $\GL(X)$ in $\mathcal C$ (compatible with the fundamental group of $\mathcal C$) is canonically equivalent to one for $\GL(F(X))$ in $\mathcal D$. So, in characteristic zero, since by Deligne's theorem (\cite{deligne_tensorielles_2002}) there exists $F: \mathcal C\to \sVec$, there is nothing new. However, in positive characteristic there are many new possibilities. For example, one can consider this problem in the Verlinde category $\Ver_p$. In this case, the classification of irreducible representations of $\GL(X)$ was obtained by Venkatesh \cite{venkatesh_representations_2022}. 
As the tensor categories admitting a tensor functor $F: \mathcal C\to \Ver_p$
are precisely the Frobenius exact categories (\cite{coulembier_frobenius_2022}), this takes care of the classification of irreducible representations of $\GL(X)$ (and thereby of simple polynomial functors) in such categories. 

However, there are many tensor categories that are not Frobenius exact, 
for example the categories $\Ver_{p^n}^+$, $\Ver_{p^n}$  defined in \cite{benson_symmetric_2020} in characteristic 2 and in \cite{benson_new_2021},\cite{coulembier_monoidal_2021} in general (namely, $\Ver_{p^n}$ is the abelian envelope of the quotient of the category of tilting modules for $\SL_2(k)$ by the $n$-th Steinberg module, while $\Ver_{p^n}^+$ is its subcategory generated by $\PGL_2(k)$-modules) when $n\ge 2$. Moreover, it is conjectured \cite{benson_new_2021} that any moderate growth tensor category $\mathcal C$ in characteristic $p$ admits a tensor functor $F: \mathcal C\to \Ver_{p^\infty}:= \colim_n \Ver_{p^n}$, so it is conjecturally sufficient to understand these examples. The goal of this paper is to do so in the simplest of them, $\mathcal C=\Ver_4^+$, and more generally to develop Lie theory in this category. 

Specifically, we develop representation theory of general linear groups in the category
$\Ver_4^+$, the tensor category of modules over the Hopf algebra $k[d]/d^2$ with $d$ primitive and $\text{char}(k)=2$, with commutativity defined by the $R$-matrix $R:=1\otimes 1+d\otimes d$ (\cite{venkatesh_hilbert_2016}).\footnote{Note that this category has implicitly appeared much earlier than the referenced papers on Verlinde categories in the context of homotopy theory, see \cite{etingof_p-adic_2020}, Remark 3.4.} Since $\Ver_4^+$ is a reduction of the category of supervector spaces to characteristic $2$ (\cite{venkatesh_hilbert_2016}, Subsection 2.3), these groups may be viewed as general linear supergroups in characteristic $2$, and more generally Lie theory in $\Ver_4^+$ may be regarded as a nontrivial version of superLie theory in characteristic $2$ (the trivial one being ordinary Lie theory, as there are no signs). More precisely, every object in $\Ver_4^+$ has the form $m\1 +nP$ where $P$ is the indecomposable projective, and $\GL(m\1+nP)$ is the reduction to characteristic $2$ of $\GL(m+n|n)$. 
We explicitly describe the irreducible representations
of $\GL(P)$ and then use this description to classify the irreducible representations of $\GL(m\1+nP)$ for general $m,n$. 
We also define some subgroups of $\GL(m\1+nP)$ and classify their irreducible representations. Finally, 
we conjecture a Steinberg tensor product theorem for $\Ver_4^+$ involving the square of the Frobenius map.

We note that there are other nontrivial versions of superLie theory in characteristic $2$, 
for example the one discussed in \cite{bouarroudj_vectorial_2020} (see Subsection 1.2.3) and \cite{bouarroudj_classification_2023} (see Subsection 2.2). In this version, Lie superalgebras are defined as $\mathbb Z/2 \mathbb{Z}$-graded Lie algebras, with an additional structure (the squaring map), whereas in our setting Lie superalgebras have no $\mathbb Z/2 \mathbb{Z}$-grading, and instead have a derivation $d$ such that $d^2=0$. 

Although the category $\Ver_4^+$ is incompressible and developing Lie theory in it is therefore a new problem, it does have a non-symmetric fiber functor to $\text{Vec}$, which allows one to do explicit computations in $\Ver_4^+$ involving vectors, as long as one remembers to apply the $R$-matrix to compensate for the failure of this functor to be symmetric. This approach will be heavily used in the paper.

The organization of the paper is as follows. In Section 2, we introduce the category $\Ver_4^+$ and give a brief overview of commutative algebra and representation theory in symmetric tensor categories. We also explain how $\Ver_4^+$ is a reduction of $\sVec$ to characteristic $2$.

In Section 3, we describe the affine group schemes we will be considering in $\Ver_4^+$, namely $\GL(m\1+nP)$ and some subgroups.

Finally, in Section 4, we classify the irreducible representations of general linear groups in $\Ver_4^+$. Namely, we first show that simple representations of $\GL(P)$ are indexed by integer degree, with two representations in each degree unless the degree is $2 \pmod{4}$, in which there is only one simple representation. Then we prove that simple representations of $\GL(m\1+nP)$ are labeled by pairs of an irreducible representation of $\GL(m)$ and an irreducible representation of $\GL(nP)$, and the latter are, in turn, labeled by $n$-tuples of irreducible $\GL(P)$-representations $L_1, \dots, L_n$ with $\deg L_1 \ge \cdots \ge \deg L_n$. We also describe certain subgroups of $\GL(nP)$ and classify their representations. We explicitly describe the structure and tensor products of irreducible representations of $\GL(P)$ and $\GL(\1 + P)$. Finally, we conjecture a Steinberg tensor product theorem for $\GL(m + nP)$.

\subsection{Acknowledgments}
I am deeply grateful to my advisor, Pavel Etingof, for both suggesting the problems in this paper and providing a huge amount of insightful advice on how to approach both the proofs and general ways of thinking about these problems. I am also grateful to Arun Kannan for our helpful conversations about the Verlinde categories and the Steinberg tensor product theorem, Andrew Snowden and Karthik Ganapathy for asking many excellent questions about $\Ver_4^+$ during a seminar talk on this paper, some of which are answered here, and Kevin Coulembier for our discussions on $\Ver_4^+$ and the Steinberg tensor product theorem.  I am also grateful to the anonymous reviewers who provided feedback on this article. This work was partially supported by NSF grant DMS-2001318.

\section{Preliminaries}

In this section, we introduce the definitions and conventions used in the rest of the paper.

\subsection{Symmetric Tensor Categories}
We refer to \cite{etingof_tensor_2015} for a full introduction to the theory of tensor categories. 
\begin{definition}
We say a category $\mcal{C}$ is a \emph{symmetric tensor category} over a field $k$ if $\mcal{C}$ is
\begin{enumerate}
    \item $k$-linear (Hom spaces are $k$-vector spaces and composition is bilinear);
    \item symmetric monoidal with tensor product $\otimes$ and braiding $c$;
    \item rigid (objects have duals);
    \item Artinian (objects have finite length);
\end{enumerate}
and
\begin{enumerate}[resume*]
    \item $\otimes$ is bilinear on morphisms;
    \item $\End_{\mcal{C}}(\1) = k$.
\end{enumerate}
\end{definition}
(Symmetric) tensor categories are a natural generalization of representation categories for affine group and supergroup schemes. For a more detailed discussion of symmetric tensor categories, we refer the reader to \cite{etingof_lectures_2021}. 

Throughout this paper, we will make little distinction between a symmetric tensor category and its ind-completion; for example, the Hopf algebras we consider will be ind-algebras rather than algebras. 

We refer to \cite{etingof_frobenius_2020} and \cite{coulembier_frobenius_2022} for a discussion on Frobenius functors in general symmetric tensor categories. In characteristic 2, following \cite{etingof_frobenius_2020}, we can define the Frobenius functor as
\begin{equation*}
    \Fr(X) = \ker (1 - c_{X, X}) / \im (1 - c_{X, X}).
\end{equation*}
If $\Fr$ is exact, we say that $\mcal{C}$ is Frobenius exact. In \cite{coulembier_frobenius_2022}, it is proven that a Frobenius exact symmetric tensor category of moderate growth over a field of characteristic 2 is equivalent to the representation category of some affine group scheme.

\subsection{Commutative algebras and (affine) group schemes in tensor categories}
\begin{definition}
    An \emph{associative algebra} $(A, \mu, \eta)$ in a symmetric tensor category $\mcal{C}$ is an object $A \in \ind \mcal{C}$ equipped with a multiplication map $\mu: A \otimes A \to A$ and a unit map $\eta: \1 \to A$ satisfying the standard associativity and unit axioms. A \emph{commutative algebra} is an associative algebra that also satisfies the commutativity axioms for $\mu$. \footnote{See \cite{coulembier_commutative_2023} for an overview of commutative algebra in STCs and \cite{coulembier_algebraic_2023} for an overview of algebraic geometry in symmetric tensor categories.} We denote the category of commutative algebras in $\mcal{C}$ by $\CommAlg \mcal{C}$.

    A \emph{coalgebra} $(H, \Delta, \epsilon)$ in $\mcal{C}$ is likewise an ind-object with homomorphisms $\Delta: H \to H \otimes H$, $\epsilon: H \to \1$ satisfying the standard coassociativity and counit axioms. 

    A Hopf algebra is an ind-object $H$ that is a bialgebra with antipode $S: H \to H$ satisfying the usual axioms for a Hopf algebra.
\end{definition}

In particular, a (unital) subalgebra of $A$ is a subobject $B$ with $\mu(B \otimes B) = B$ and $B$ containing the image of $\eta$, while a right ideal in $A$ is a subobject $I$ such that $\mu(I \otimes A) = I$; we can likewise define left and two-sided ideals. A Hopf subalgebra $B$ of $H$ is a subalgebra of $H$ with $\Delta(B) \subset B \otimes B$, and a Hopf ideal $I$ of $H$ is an ideal with $\Delta(I) \subset H \otimes I + I \otimes H$, $S(I) = I$.

Affine group schemes can be approached via two perspectives, the Grothendieck functor of points and coordinate rings. Throughout this paper, we will use both to study their representations.

\begin{definition}
An \emph{affine group scheme} $G$ in $\mcal{C}$ is a representable functor $\CommAlg \mcal{C} \to \Grp$ (the Grothendieck functor of points).
\end{definition}
In more detail, the coordinate ring $\mcal{O}(G)$ is the (ind-)algebra representing $G$, i.e.
\begin{equation*}
    G(A) = \Hom(\mcal{O}(G), A).
\end{equation*} The morphisms $\mu_G: G \times G \to G$ (multiplication), $i_G: G \to G$ (inversion), and $\eta_G: \1 \to G$ (unit) give $\mcal{O}(G)$ the structure of a Hopf algebra. The comultiplication, counit, and antipode are 
\begin{align*}
\Delta_G &= \mu_G^*: \mcal{O}(G) \to \mcal{O}(G) \otimes \mcal{O}(G), \\
\varepsilon_G &= \eta_G^* : \mcal{O}(G) \to k, \\
\sigma_G &= i_G^*: \mcal{O}(G) \to \mcal{O}(G).
\end{align*}
Conversely, to every commutative Hopf algebra $A$ we can associate an affine group scheme $\Spec A$ given by $(\Spec A)(B) = \Hom(A, B)$. These functors are quasi-inverses and \begin{equation*}\AffGrp \mcal{C} \cong (\CommHopfAlg \mcal{C})^{\op},\end{equation*} where $\AffGrp \mcal{C}$ is the category of affine group schemes in $\mcal{C}$.

\begin{remark}
    Alternatively, an affine group scheme in $\mcal{C}$ is a group object in the category of affine schemes in $\mcal{C}$, i.e. the category whose objects are representable functors $\CommAlg \mcal{C} \to \text{Set}$ and whose morphisms are natural transformations between such functors. 
\end{remark}

\begin{definition}
    An affine group scheme is \emph{finite} if $\mcal{O}(G)$ is an object, not just an ind-object. An affine group scheme is of \emph{finite type} if $\mcal{O}(G)$ is finitely generated.
\end{definition}

\begin{definition}
A \emph{morphism of affine group schemes} $G \to H$ is a natural transformation $f: G \to H$; that is, $f(A): G(A) \to H(A)$ is a group homomorphism for all $A \in \CommAlg \mcal{C}$ commuting with algebra morphisms. Equivalently, it is a morphism of Hopf algebras \newline $\mcal{O}(H) \to \mcal{O}(G)$.
\end{definition}

\begin{example}
    Let $X \in \mcal{C}$ be an object. The general linear group $\GL(X)$ is the affine group scheme such that
    \begin{equation*}
        \GL(X)(A) = \End_A(X \otimes A)^\times.
    \end{equation*}
\end{example}

\begin{definition}
    A \emph{representation of an affine group scheme} $G$ is an object $M \in \mcal{C}$ with a homomorphism $G \to GL(M)$; that is, a group homomorphism $G(A) \to \End(M \otimes A)^\times$ for every $A \in \CommAlg \mcal{C}$ commuting with algebra morphisms. Alternately, it is an $\mcal{O}(G)$-comodule.
\end{definition}

We can take direct sums, tensor products, and duals of representations. Additionally, $G$ has left and right regular representations on $\mcal{O}(G)$: the left regular coaction of $\mcal{O}(G)$ on itself is $(\sigma_G \otimes \id) \circ \Delta_G$, and the right regular coaction is $\Delta_G$. However, note that $\mcal{O}(G)$ is an ind-object. For the rest of the paper, when we refer to an irreducible representation, we mean a finite length one, i.e. not just an ind-object.
\subsubsection{The fundamental group of a symmetric tensor category}

Recall that every symmetric tensor category $\mcal{C}$ has a canonical affine group scheme associated to it, its fundamental group $\pi_1(\mcal{C})$. As a functor of points, $\pi_1(\mcal{C})$ is defined by $\pi_1(\mcal{C})(A) = \Aut^\otimes(A \otimes -)$ (that is, tensor automorphisms of the functor $A \otimes -$). The fundamental group was introduced in \cite{deligne_categories_1990}. $\pi_1(\mcal{C})$ has a natural action on every object $X \in \mcal{C}$ since $\pi_1(\mcal{C})(A)$ acts on $A \otimes X$.

Any symmetric tensor functor $F: \mcal{C} \to \mcal{D}$ induces a map $\epsilon: \pi_1(\mcal{D}) \to F(\pi_1(\mcal{C}))$, and Tannakian reconstruction states that $\mcal{C} \cong \Rep_{\mcal{D}}(F(\pi_1(\mcal{C})), \epsilon)$: that is, the category of representations of $F(\pi_1(\mcal{C}))$ that, when restricted to $\pi_1(\mcal{D})$ via $\epsilon$, give the natural action of $\pi_1(\mcal{D})$. 

In this paper, we are only concerned with representations of affine group schemes that are compatible with the action of the fundamental group of the ambient tensor category.

\subsubsection{Distribution Algebras}

Let $G$ be an affine group scheme of finite type. The Hopf algebra $\mcal{O}(G)$ has the augmentation ideal $I = \ker \epsilon_G$, the kernel of the counit $\epsilon_G$. Since $G$ is of finite type, $\mcal{O}(G)/I^n$ is finite-dimensional. Then the distribution algebra of $G$ is the ind-object
\begin{equation*}
    \Dist(G) = \bigcup_{n = 0}^\infty (\mcal{O}(G)/I^n)^*.
\end{equation*}
When $G$ is finite, $\Dist(G)$ is called the group algebra of $G$ and is also denoted $kG$.

\begin{proposition}
    $\Dist(G)$ is a cocommutative Hopf algebra. Moreover, $(I/I^2)^* \subset \Dist(G)$ is a Lie subalgebra.
\end{proposition}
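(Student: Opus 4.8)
The plan is to transport the Hopf structure of $\mcal{O}(G)$ onto $\Dist(G)$ by dualizing along the tower of finite length quotients $\mcal{O}(G)/I^n$. Each $\mcal{O}(G)/I^n$ is a genuine object of $\mcal{C}$ (not merely an ind-object), hence dualizable, so $(-)^*$ restricted to these quotients and their tensor powers is a contravariant symmetric monoidal anti-equivalence; in particular the canonical map $(\mcal{O}(G)/I^m \otimes \mcal{O}(G)/I^n)^* \to (\mcal{O}(G)/I^m)^* \otimes (\mcal{O}(G)/I^n)^*$ is an isomorphism. The one piece of commutative algebra in $\mcal{C}$ that makes everything fit is the containment $\Delta_G(I^{m+n}) \subseteq I^m \otimes \mcal{O}(G) + \mcal{O}(G)\otimes I^n$: from $(\varepsilon_G\otimes\varepsilon_G)\circ\Delta_G = \varepsilon_G$ one obtains $\Delta_G(I) \subseteq I\otimes\mcal{O}(G)+\mcal{O}(G)\otimes I$, hence $\Delta_G(I^{m+n}) \subseteq \sum_{i+j=m+n} I^i\otimes I^j$, and in each summand either $i\ge m$ or $j\ge n$.

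First I would build the multiplication $\mu_{\Dist}\colon \Dist(G)\otimes\Dist(G)\to\Dist(G)$: by the containment above $\Delta_G$ descends to a map $\mcal{O}(G)/I^{m+n}\to\mcal{O}(G)/I^m\otimes\mcal{O}(G)/I^n$, and dualizing gives compatible maps $(\mcal{O}(G)/I^m)^*\otimes(\mcal{O}(G)/I^n)^*\to(\mcal{O}(G)/I^{m+n})^*$ whose colimit is $\mu_{\Dist}$; the unit is $\varepsilon_G^*\colon \1\cong(\mcal{O}(G)/I)^*\inj\Dist(G)$. Dually, since $I$ is an ideal, $\mu_G$ descends to $\mcal{O}(G)/I^n\otimes\mcal{O}(G)/I^n\to\mcal{O}(G)/I^n$ for each $n$; dualizing and taking the colimit yields the comultiplication $\Delta_{\Dist}\colon\Dist(G)\to\Dist(G)\otimes\Dist(G)$, with counit $\eta_G^*\colon\Dist(G)\to\1$. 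Since $\varepsilon_G\circ\sigma_G=\varepsilon_G$ and $\sigma_G$ is an algebra homomorphism, $\sigma_G(I)\subseteq I$, so $\sigma_G$ descends to the tower and its dual is the antipode. Every Hopf axiom for $\Dist(G)$ is then obtained by applying $(-)^*$ to the corresponding diagram for $\mcal{O}(G)$ restricted to the tower: associativity of $\mu_G$ gives coassociativity of $\Delta_{\Dist}$ and coassociativity of $\Delta_G$ gives associativity of $\mu_{\Dist}$, the bialgebra compatibility is self-dual, the antipode identity transfers, and -- crucially -- the commutativity $\mu_G\circ c_{\mcal{O}(G),\mcal{O}(G)}=\mu_G$ dualizes, using compatibility of $(-)^*$ with the braiding in a symmetric category, to $c_{\Dist(G),\Dist(G)}\circ\Delta_{\Dist}=\Delta_{\Dist}$, i.e. cocommutativity.

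For the second assertion, I would first realize $(I/I^2)^*$ inside $\Dist(G)$: the sequence $0\to I/I^2\to\mcal{O}(G)/I^2\xrightarrow{\bar\varepsilon_G}\1\to 0$ is split by $\bar\eta_G$, so dualizing exhibits $(I/I^2)^*$ as the kernel of $\bar\eta_G^*$ inside $(\mcal{O}(G)/I^2)^*\subseteq\Dist(G)$ -- the distributions vanishing on the identity. Next I would check that this subobject is precisely the object of primitive elements of the bialgebra $\Dist(G)$, i.e. the equalizer of $\Delta_{\Dist}$ and $x\mapsto x\otimes 1+1\otimes x$: a distribution $\mu$ is primitive iff $\mu(ab)=\mu(a)\varepsilon_G(b)+\varepsilon_G(a)\mu(b)$ for all local sections $a,b$ of $\mcal{O}(G)$ (the morphism-level restatement of $\Delta_{\Dist}(\mu)=\mu\otimes 1+1\otimes\mu$), which forces $\mu$ to vanish on $I^2$ and on the image of $\eta_G$, hence to lie in $(I/I^2)^*$, and conversely every element of $(I/I^2)^*$ satisfies this identity. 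Finally I would invoke the standard fact that in a cocommutative bialgebra in a symmetric tensor category the object of primitives is closed under the commutator bracket $\mu_{\Dist}-\mu_{\Dist}\circ c$ and hence acquires the structure of a Lie algebra in $\mcal{C}$; applied to $\Dist(G)$ this says exactly that $(I/I^2)^*=\Lie(G)$ is a Lie subalgebra.

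The main obstacle is not a single deep step but the bookkeeping that makes the dualization rigorous in the (ind-)categorical setting: one must verify that each structure map of $\mcal{O}(G)$ genuinely descends to the tower of finite length quotients -- the place where the containment $\Delta_G(I^{m+n})\subseteq I^m\otimes\mcal{O}(G)+\mcal{O}(G)\otimes I^n$ does the work and must be established carefully -- that the induced maps are compatible with the transition maps so that the colimits exist and inherit the axioms, and that $(-)^*$ on genuine objects is monoidal and braiding-compatible so that \emph{all} of the Hopf axioms, together with cocommutativity, transfer rather than just some of them. Once the Hopf structure is in place, identifying $(I/I^2)^*$ with the primitives and concluding is routine.
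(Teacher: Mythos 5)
Your proposal is correct and follows essentially the same route as the paper: the paper defers the Hopf-algebra structure on $\Dist(G)$ to a citation (Venkatesh, Lemma 4.31), whose content is exactly the dualization along the tower $\mcal{O}(G)/I^n$ that you spell out, and it identifies $(I/I^2)^*$ with the maximal primitive subobject (primitivity $=$ vanishing on $1$ and on $I^2$) just as you do. The only cosmetic point is a notational clash in your use of $\eta_G$ for both the group unit and the algebra unit of $\mcal{O}(G)$, which does not affect the argument.
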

\begin{proof}
    The proof that $\Dist(G)$ is a cocommutative Hopf algebra is identical to the proof in \cite{venkatesh_harish-chandra_2022} Lemma 4.31. $(I/I^2)^*$ is a Lie subalgebra of $\Dist(G)$ because it is the maximal primitive subobject of $\Dist(G)$, as primitivity is equivalent to vanishing on $1$ and on $I^2$.
\end{proof}
\begin{definition}
    The Lie algebra $\mfrak{g}$ of $G$ is $(I/I^2)^*$.
\end{definition}

\begin{definition}
    Let $H \subset G$ be an inclusion of affine group schemes. Then for an $H$-module $V$, the induced $G$-module is $$\ind_H^G V = \Dist(G) \otimes_{\Dist(H)} V.$$
\end{definition}

\subsection{The category $\Ver_4^+$}

Let $k$ be a field of characteristic 2, not necessarily algebraically closed. The categories $\Ver_{2^n}(k), \Ver_{2^n}^+(k)$ were introduced in \cite{benson_symmetric_2020}. For $n = 2$, $\Ver_4^+(k)$ was known before as the reduction to characteristic $2$ of $\sVec$ (\cite{venkatesh_hilbert_2016}). Also, $\Ver_4^+(k)$ has a concrete description. Consider the Hopf algebra $k[d]/d^2$ where $d$ is primitive; then $\Ver_4^+(k)$ is the monoidal category of finite-dimensional $k[d]/d^2$-modules with symmetric braiding given by $1 \otimes 1 + d \otimes d$, i.e.
\begin{equation*}
    c(v \otimes w) = w \otimes v + dw \otimes dv.
\end{equation*}
From now on, we abbreviate $dx$ to $x'$ for brevity.

The category $\Ver_4^+(k)$ has one simple object $\1$, the trivial representation, with indecomposable projective cover $P=k[d]/d^2$ the regular representation. These are the only indecomposable representations, so every object in $\Ver_4^+(k)$ is of the form $m\1 + nP$. For brevity, we will often denote $m\1$ by $m$. We have $P^* \cong P$. $\Ver_4^+(k)$ is not Frobenius exact since $\Fr(P) = 0$.

Notice that $\Ver_4^+(k)$ admits a forgetful functor to $\text{Vec}_k$, although this functor does not respect the braiding. Nevertheless, this means we can treat objects in $\Ver_4^+$ concretely as vector spaces with additional structure (an automorphism $d$ such that $d^2 = 0$), and any relations involving the braiding can be written in terms of elements of these vector spaces and the $d$-action using the $R$-matrix. Hence, we can speak of elements and bases of objects in $\Ver_4^+(k)$.

For example, in $\Ver_4^+(k)$, a commutative algebra $A$ corresponds to an ordinary, possibly non-commutative algebra over $k$ equipped with a derivation $d$ with $d^2 = 0$. Since $d$ is primitive, 
\begin{equation*}
    (ab)' = a'b + ab'
\end{equation*}
and the commutativity of $A$ in $\Ver_4^+(k)$ is equivalent to
\begin{equation*}
    ab = ba + a'b'.
\end{equation*}
In particular, we obtain that
\begin{equation*}
    (a^n)' = n a^{n-1}a',
\end{equation*}
\begin{equation*}
    (a')^2 = 0,
\end{equation*}
and that $\ker d$ is in the center of $A$.


For the rest of the paper, $k$ will denote an algebraically closed field of characteristic 2 unless otherwise specified, and we will refer to $\mcal{C} = \Ver_4^+(k)$, dropping $k$ from the notation. 

\subsubsection{$\Ver_4^+$ as reduction of $\sVec$ to characteristic 2}

In this section, let $K := \mbb{Q}_2(\sqrt{2})$, where $\mbb{Q}_2$ is the field of $2$-adic numbers. We recall how $\Ver_4^+(\mbb{F}_2)$ can be constructed as a non-semisimple reduction of $\sVec(K)$ over $K$. This construction was described in \cite{venkatesh_hilbert_2016}, Section 2.3. Let $H := K[\mbb{Z}/2\mbb{Z}]$ be the group algebra of $\mbb{Z}/2\mbb{Z}$ as a Hopf algebra, i.e. it is generated by $1, g$ such that $g^2 = 1$ and $\Delta(g) = g \otimes g$.
Recall that in characteristic not 2, the category of supervector spaces is the $H$-modules $\Rep(H)$ with associated $R$-matrix
\begin{equation*}
    \frac{1}{2}(1 \otimes 1 + 1 \otimes g + g \otimes 1 - g \otimes g).
\end{equation*}
Setting $b = 1 - g$, we can rewrite
\begin{equation*}
    R = 1 \otimes 1 - \frac{1}{2} b \otimes b
\end{equation*}
and setting $d = \frac{1}{\sqrt{2}}b$, we have
\begin{equation*}
    R = 1 \otimes 1 - d \otimes d.
\end{equation*}
Additionally, $d^2 = \sqrt{2}d$ and
\begin{equation*}
    \Delta(d) = 1 \otimes d + d \otimes 1 + \sqrt{2}(d \otimes d).
\end{equation*}

Therefore, $d$ generates an order in $H$ over $\mcal{O} := \mathbb Z_2[\sqrt{2}]$. Reducing this order modulo the maximal ideal in $\mcal{O}$, which is the ideal generated by $\sqrt{2}$, we get the Hopf algebra $\mathbb{F}_2[d]/d^2$ over $\mathbb{F}_2$  with $a$ primitive and braiding $(1 \otimes 1 + d \otimes d) \circ \sigma$. Thus we obtain the symmetric tensor category $\Ver_4^+(\mbb{F}_2)$. For other fields $K$, $\Ver_4^+(K)$ is obtained via extension of scalars as
\begin{equation*}
    \Ver_4^+(K) := \Ver_4^+(\mathbb{F}_2) \otimes_{\mbb{F}_2} K.
\end{equation*}

\begin{proposition}
    Modulo the maximal ideal of $\mcal{O}$, the supervector space $K^{(1, 0)}$ with one-dimensional even part reduces to $\1 \in \Ver_4^+(\mbb{F}_2)$, whereas the supervector space $K^{(1, 1)}$ with one-dimensional even part and one-dimensional odd part reduces to $P \in \Ver_4^+(\mbb{F}_2)$.
\end{proposition}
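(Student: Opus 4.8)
The plan is to trace through the reduction construction described above and track what happens to the two indecomposable supervector spaces $k^{(1,0)}$ and $k^{(1,1)}$. Recall that $\sVec(k) = \Rep(k[\mathbb{Z}/2\mathbb{Z}])$, and the reduction passes to modules over the order generated by $d = \tfrac{1}{\sqrt 2}(1-g)$ over $\mathcal{O} = \mathbb{Z}_2[\sqrt 2]$, then reduces mod the maximal ideal $(\sqrt 2)$. So concretely I would fix an $\mathcal{O}$-lattice inside each supervector space that is stable under the action of the order $\mathcal{O}[d] \subset k[\mathbb{Z}/2\mathbb{Z}]$, reduce it modulo $(\sqrt 2)$, and identify the resulting $\mathbb{F}_2[d]/d^2$-module as either $\1$ or $P$.

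First I would handle $k^{(1,0)}$: here the group element $g$ acts as $+1$ (purely even), so $b = 1-g$ acts as $0$, hence $d$ acts as $0$. The obvious lattice is $\mathcal{O}\cdot v$ for a basis vector $v$; reducing mod $(\sqrt 2)$ gives a one-dimensional $\mathbb{F}_2$-space on which $d$ acts by zero, which is exactly $\1 \in \Ver_4^+(\mathbb{F}_2)$. Next, for $k^{(1,1)}$ with even basis vector $v_0$ and odd basis vector $v_1$: $g$ acts by $+1$ on $v_0$ and $-1$ on $v_1$, so $b = 1-g$ sends $v_0 \mapsto 0$ and $v_1 \mapsto 2v_1$, whence $d = \tfrac{1}{\sqrt 2} b$ sends $v_1 \mapsto \sqrt 2\, v_1$ and $v_0 \mapsto 0$. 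This is not yet the right integral structure — the key subtlety is choosing the lattice so that $d$ becomes the nilpotent generator of $P = \mathbb{F}_2[d]/d^2$ rather than something that dies. The right choice is the lattice $L = \mathcal{O} v_1 + \mathcal{O}\cdot (d v_1) = \mathcal{O} v_1 + \mathcal{O}\sqrt 2\, v_1$; equivalently, rescale so that on the lattice with basis $\{e_0 := v_1,\ e_1 := d v_1\}$ one has $d e_0 = e_1$ and $d e_1 = d^2 v_1 = \sqrt 2 \, d v_1 = \sqrt 2\, e_1 \equiv 0 \pmod{\sqrt 2}$. Reducing mod $(\sqrt 2)$ then yields a two-dimensional $\mathbb{F}_2$-module with $d e_0 = e_1$, $d e_1 = 0$, i.e. the free rank-one $\mathbb{F}_2[d]/d^2$-module, which is $P$.

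The remaining point is to check that these reductions are compatible with the symmetric monoidal structure, i.e. that the braidings match: on the $\mathcal{O}$-form the $R$-matrix is $1\otimes 1 - d\otimes d$, and mod $(\sqrt 2)$ (where $-1 = 1$) this becomes $1\otimes 1 + d\otimes d$, which is precisely the braiding defining $\Ver_4^+(\mathbb{F}_2)$; so the identifications $k^{(1,0)}\rightsquigarrow \1$ and $k^{(1,1)}\rightsquigarrow P$ respect tensor structure. One should also note that $P$ is indeed indecomposable as an $\mathbb{F}_2[d]/d^2$-module (it is the regular representation, with simple socle), so there is no ambiguity in the identification.

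I expect the main obstacle to be purely bookkeeping: pinning down the correct $\mathcal{O}$-lattice in $k^{(1,1)}$ so that the reduction is the free module $P$ and not, say, $\1 \oplus \1$ — a naive lattice such as $\mathcal{O} v_0 + \mathcal{O} v_1$ would reduce to the module where $d$ acts by zero, giving $\1 + \1$. The content of the proposition is really that the construction in \cite{venkatesh_hilbert_2016} implicitly makes the nontrivial lattice choice, and the proof amounts to making that choice explicit and verifying it is $\mathcal{O}[d]$-stable with the claimed reduction.
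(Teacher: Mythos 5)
The first half of your argument (the purely even line $k^{(1,0)}$ reducing to $\1$) is correct and matches the paper. The treatment of $k^{(1,1)}$, however, contains a genuine error at exactly the step you flag as the crux. You propose the lattice $L=\mathcal{O}v_1+\mathcal{O}\,(dv_1)=\mathcal{O}v_1+\mathcal{O}\sqrt{2}\,v_1$ with ``basis'' $e_0=v_1$, $e_1=dv_1$. But $e_1=\sqrt{2}\,e_0$ and $\sqrt{2}\in\mathcal{O}$, so $L=\mathcal{O}v_1$ is a rank-one lattice that does not even span $k^{(1,1)}$ over $k$ — it misses the even line entirely. Its reduction $L/\sqrt{2}L$ is one-dimensional over $\mathbb{F}_2$, and the image of $e_1=\sqrt{2}e_0$ there is zero, so what you actually get is $\1$, not $P$. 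The underlying problem is that $d$ acts on the odd line by the scalar $\sqrt{2}$, so $v_1$ and $dv_1$ are proportional and can never form a basis; no lattice contained in the odd line can reduce to the two-dimensional object $P$.

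The correct choice — the one the paper makes — is an inhomogeneous basis mixing the even and odd vectors: $p=v_0+v_1$ and $q=\sqrt{2}\,v_1$. These are linearly independent over $k$ (one has a nonzero even component, the other does not), so $\mathcal{O}p+\mathcal{O}q$ is a full rank-two lattice, and $dp=dv_0+dv_1=0+\sqrt{2}v_1=q$ while $dq=\sqrt{2}q$. Reducing modulo $(\sqrt{2})$ gives a two-dimensional $\mathbb{F}_2[d]/d^2$-module with $d\bar p=\bar q$ and $d\bar q=0$, i.e.\ the regular representation $P$. It is worth noting that you correctly diagnose the failure of the naive homogeneous lattice $\mathcal{O}v_0+\mathcal{O}v_1$ (which reduces to $\1\oplus\1$) and correctly identify that the whole content of the proposition is the lattice choice — but the lattice you then select is not a valid one. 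The fix is to make the generator of the lattice inhomogeneous, not merely to adjoin $dv_1$ to $v_1$.
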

\begin{proof}
    If $v$ is even, $gv = v$, and if $w$ is odd, $gw = -w$. Since $d = \frac{1 -g}{\sqrt{2}}$, we have $dv = 0$ and $dw = \sqrt{2}w$. Therefore, for $V = k^{(1, 0)} = \langle v \rangle$, its reduction modulo $\sqrt{2}$ is $\1$.

    For $V = k^{(1, 1)} = \langle v, w \rangle$, consider the (inhomogeneous) basis $p = v + w$ and $q = \sqrt{2}w$. Then $dp = q$ and $dq = \sqrt{2}q$. Reducing modulo $\sqrt{2}$, we get $dp = q, dq = 0$, giving $P$.
\end{proof}
\begin{corollary}
    The supervector space $K^{m + n|n}$ reduces to $m + nP \in \Ver_4^+(\mbb{F}_2)$.
\end{corollary}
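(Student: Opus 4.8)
The plan is to reduce the statement to the preceding Proposition by decomposing $k^{m+n|n}$ as a direct sum of the two building blocks $k^{(1,0)}$ and $k^{(1,1)}$, and then invoking additivity of the reduction procedure.

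First I would observe that, as a super vector space over $k$, there is a $\mbb{Z}/2\mbb{Z}$-graded isomorphism
\begin{equation*}
    k^{m+n|n} \cong (k^{(1,0)})^{\oplus m} \oplus (k^{(1,1)})^{\oplus n},
\end{equation*}
since both sides have $(m+n)$-dimensional even part and $n$-dimensional odd part. Concretely, fix a homogeneous basis $v_1, \dots, v_{m+n}$ of the even part and $w_1, \dots, w_n$ of the odd part; the even vectors $v_{n+1}, \dots, v_{n+m}$ span $m$ copies of $k^{(1,0)}$, and the pairs $(v_i, w_i)$ for $i = 1, \dots, n$ span $n$ copies of $k^{(1,1)}$.

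Next, recall that the reduction is carried out by choosing an $\mcal{O}$-order (a lattice stable under $d$ and the Hopf structure, with $\mcal{O} = \mbb{Z}_2[\sqrt 2]$) and reducing it modulo the maximal ideal $(\sqrt 2)$. I would choose the lattice in $k^{m+n|n}$ to be the direct sum of the lattices used in the Proposition for the individual summands — namely $\mcal{O} v_j$ for each copy of $k^{(1,0)}$, and $\mcal{O} p_i \oplus \mcal{O} q_i$ with $p_i = v_i + w_i$, $q_i = \sqrt 2\, w_i$ for each copy of $k^{(1,1)}$. With this choice the reduction of the total lattice is by construction the direct sum of the reductions of the summands, so applying the Proposition to each summand yields that $k^{m+n|n}$ reduces to $m\1 + nP = m + nP$ in $\Ver_4^+(\mbb{F}_2)$.

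There is essentially no obstacle here: the only point needing care is that the chosen integral structure be compatible with the direct sum decomposition, and this is automatic once the lattice is taken to be the direct sum of the building-block lattices. The content is just the additivity of the reduction construction, which was already implicit in the setup of the previous subsection.
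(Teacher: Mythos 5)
Your proposal is correct and is exactly the argument the paper intends: the corollary follows from the preceding proposition by decomposing $k^{m+n|n}$ into $m$ copies of $k^{(1,0)}$ and $n$ copies of $k^{(1,1)}$ and reducing the direct-sum lattice termwise. The paper leaves this implicit (no proof is given for the corollary), and your write-up supplies the same routine additivity argument with the appropriate care about the choice of $\mcal{O}$-lattice.
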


\subsubsection{The fundamental group of $\Ver_4^+$}

\begin{proposition}
    The fundamental group $\pi 
 = \underline{Aut}^{\otimes}(\on{Id})$ of $\Ver_4^+$ has group algebra \begin{equation*}k\pi = k[d]/d^2,\end{equation*} the commutative and cocommutative Hopf algebra in $\Ver_4^+$ which is contained in $\on{Vec}$. As a functor of points, it sends $A$ to elements that square to 0 in $\ker d \subset A$ under addition.
\end{proposition}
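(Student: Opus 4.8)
The plan is to compute $\pi_1(\mathcal C)$ directly from its definition as a functor of points, $\pi_1(A) = \Aut^\otimes(A\otimes -)$, and then identify the resulting affine group scheme with $\Spec$ of a Hopf algebra. First I would recall that a tensor automorphism of the functor $A\otimes -\colon \mathcal C \to \on{Mod}_A(\mathcal C)$ is, by Tannakian formalism, determined by its value on a single generating object; since $P$ is the projective generator of $\mathcal C$ and every object is a summand of a sum of copies of $P$, a tensor automorphism is the same as an $A$-linear automorphism $\phi_P$ of $P\otimes A$ that (i) commutes with all morphisms in $\mathcal C$ out of and into $P$ (naturality) and (ii) is compatible with the tensor structure, i.e. $\phi_{P\otimes P} = \phi_P \otimes \phi_P$ under the identification coming from the decomposition of $P\otimes P$. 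Naturality with respect to the maps $\1 \to P$ and $P \to \1$ (the unit and counit of the self-duality, or equivalently the socle inclusion and head projection of $P=k[d]/d^2$) will force $\phi_P$ to act as the identity on the socle and to induce the identity on the head, so $\phi_P = 1 + d\cdot(\text{something})$; I would then extract from this the claim that the only freedom is an element $t \in \ker d \subset A$ (landing in the appropriate graded piece) with $\phi_P(x) = x + t\,x'$, and multiplicativity under $\otimes$ will impose $t^2 = 0$ and additivity of the group law $t_1 \ast t_2 = t_1 + t_2$.

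The second step is to package this: I would show the functor $A \mapsto \{\, t \in \ker d \subset A : t^2 = 0 \,\}$ with addition is representable, and that its representing Hopf algebra is $k[d]/d^2$ with $d$ primitive — precisely the description of $k\pi$ asserted. Here one checks that a map $t\colon \1 \to A$ (an element of $A$ in the categorical sense) factoring through $\ker d$ and squaring to zero is the same as a Hopf algebra map $k[d]/d^2 \to A$, where the target's $d$ is sent to $t$; the condition $t \in \ker d$ says $t$ is a morphism in $\mathcal C$ from $\1$ (which is how $d$ sits inside $k[d]/d^2$ viewed *in* $\on{Vec}\subset\Ver_4^+$, since $\1$ is the unit), $t^2=0$ is the relation $d^2=0$, and primitivity of $d$ matches the additive group law found in step one. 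This identifies the functor of points, hence the group scheme, hence $k\pi = \Dist(\pi) = (k[d]/d^2)$; cocommutativity is automatic for a distribution algebra (as noted in the earlier proposition), and commutativity here is visible since $k[d]/d^2$ sits in $\on{Vec}$, where the braiding is trivial on it ($d$ has $d'=d\cdot d$... — more precisely $d$ as an element of the unit object $\1$ has $d(d) = 0$, so $c(d\otimes d) = d\otimes d$).

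The main obstacle I anticipate is step one — pinning down *exactly* which $A$-linear endomorphisms of $P\otimes A$ are natural and tensor-compatible, and showing nothing more than the one-parameter family $x \mapsto x + tx'$ survives. This requires knowing the morphism spaces $\Hom(\1,P)$, $\Hom(P,\1)$, $\Hom(P,P)$ in $\mathcal C$ and how $P\otimes P$ decomposes (it is $2P$, with explicit idempotents), and then running the constraint that $\phi$ respects this decomposition and the braiding $c_{P,P}$. The computation that tensor-compatibility forces $t^2 = 0$ (rather than allowing arbitrary $t\in\ker d$) is the subtle point: it should come from comparing $\phi_{P\otimes P}$ computed as $\phi_P \otimes \phi_P$ against what naturality alone would allow, using the formula $c(v\otimes w) = w\otimes v + w'\otimes v'$ and the relation $(t)'=0$, $t$ central. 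Once that is done, the identification with $k[d]/d^2$ and the "functor of points" description in the statement both follow formally.
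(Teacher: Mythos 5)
Your plan follows essentially the same route as the paper: determine the tensor automorphism on $P$, use naturality (via the socle/head maps, where the paper instead uses that $\eta_{P\otimes P}$ fixes $\im d$ in $P\otimes P\cong 2P$ --- a cosmetic difference) to reduce to $x\mapsto x+tx'$ with $t\in\ker d$, and then extract $t^2=0$ from $\phi_{P\otimes P}=\phi_P\otimes\phi_P$, which is exactly the computation the paper carries out. The remaining identification of the representing Hopf algebra with $k[d]/d^2$ (self-dual, so coordinate ring and group algebra coincide) is handled as briskly in the paper as in your sketch, so the proposal is correct and matches the paper's proof.
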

\begin{proof}
    Recall that $\pi(A) = \underline{\Aut}^{\otimes}(- \otimes A)$ and let $\eta \in \pi(A)$ be one such tensor automorphism. Then $\eta_{\1} \in \Aut(A)$ is multiplication by an invertible element in $A$, and $\eta_{\1 \otimes \1} = \eta_{\1} \otimes \eta_{\1}$ implies that $\eta_{\1}$ is the identity map. This also determines $\eta_{m\1}$, since $\eta$ is additive.

    It then suffices to determine $\eta_{P} \in \GL(P)(A)$ and ensure $\eta_{P \otimes P}$ respects isomorphisms $P \otimes P \cong 2P$. Writing $\eta_{P}$ as a matrix $\begin{pmatrix} a & a' \\ b & a + b' \end{pmatrix}$ with entries in $A$, we know that $\eta_{P}$ must be trivial on $\im d$, so $a' = 0, a + b' = 1$. Additionally, since $\eta$ is additive, $\eta_{nP}$ fixes $n\1 \subset nP$. For $x \in nP$ not in $\ker d$, $\eta(x \otimes 1) = x \otimes (1 + b') + x' \otimes b$.
    
    Suppose we have an isomorphism $P \otimes P \cong 2P$ sending $1 \otimes 1 \mapsto x$, $d \otimes 1 \mapsto y$, so $1 \otimes d + d \otimes 1 \mapsto x'$ and $d \otimes d \mapsto y'$. Since $\eta_{P \otimes P}$ must fix elements in $\im d$, in particular $1 \otimes d + d \otimes 1$, we must have $1 + b' = 1$, so $b' = 0$. Also, we want
    \begin{equation*}
        \eta_{P \otimes P}(1 \otimes 1) = (1 \otimes 1) \otimes (1 + b') + (d \otimes 1 + 1 \otimes d) \otimes b
    \end{equation*}
    and we can check this requires $b^2 = 0$. Then $$\eta_{P \otimes P}(d \otimes 1) = (d \otimes 1) \otimes (1 + b') + (d \otimes d) \otimes b,$$ and likewise for $1 \otimes d$. Therefore, $\eta_P = \begin{pmatrix} 1 & 0 \\ b & 1 \end{pmatrix}$ where $b^2 = 0$, $b' = 0$, so $\eta$ is parametrized by elements in $\ker d$ that square to $0$.
    
    Therefore, $k\pi = k[d]/d^2$.
\end{proof}

\subsection{Highest Weights} 
\label{highest_weights}

From this section onwards, we assume our symmetric tensor category $\mcal{C}$ satisfies hypothesis 1.3.1 in \cite{coulembier_algebraic_2023}, which allows us to develop scheme theory in $\mcal{C}$. In particular, $\Ver_4^+$ satisfies this hypothesis, see \cite{coulembier_algebraic_2023} Section 8.

Let $G$ be an affine group scheme of finite type in such a symmetric tensor category $\mcal{C}$ with $\phi: \pi_1 (\mcal{C}) \to G$. Let $f: \mbb{G}_m \to G$ be a homomorphism which lands in the centralizer of $\phi(\pi_1(\mcal{C}))$. Let $Z(f)$ be the centralizer of the image of $f$ in $G$. Then for every irreducible representation $L$ of $G$ compatible with $\phi$, we have the representation $\hw_f(L)$ of $Z(f)$ compatible with $\phi$ on the eigenobject of $\mbb{G}_m$ on $L$ (acting via $f$) with largest eigenvalue (which is an integer). We call $\hw_f(L)$ the highest weight of $L$ with respect to $f$.

\begin{theorem}\label{highest_weight_thm}
    \begin{enumerate}
        \item $\hw_f(L)$ is irreducible.
        \item The assignment $L\mapsto \hw_f(L)$ is an injective map $\hw_f: \Irrep(G,\phi) \to \Irrep(Z(f),\phi)$.
        \item If $X,Y$ belong to the image $I(f)$ of $\hw_f$ then every composition factor of $X\otimes Y$ also belongs to $I(f)$.
    \end{enumerate}
\end{theorem}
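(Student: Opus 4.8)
The plan is to exploit the decomposition of a representation into $\mathbb{G}_m$-eigenobjects with respect to $f$. Since $f$ lands in the centralizer of $\phi(\pi_1(\mcal{C}))$, each eigenobject is itself a $Z(f)$-subrepresentation compatible with $\phi$, and since $\mathbb{G}_m$ is commutative its action on any representation $M$ of $G$ gives a $\mathbb{Z}$-grading $M = \bigoplus_{i} M_i$ by weight, with $G$-compatibility translating into the statement that $Z(f)$ preserves each $M_i$. The key structural input is that the group generated by $G$ and the $\mathbb{G}_m$ acting via $f$ admits a "parabolic" type subgroup $P(f)$ with Levi $Z(f)$, so that induction/restriction between $G$ and $Z(f)$ behaves like the classical highest weight theory; concretely, one builds $P(f)$ as the subgroup whose $A$-points are elements $g \in G(A)$ such that $\lim_{t \to 0} f(t) g f(t)^{-1}$ exists, a closed subgroup scheme with $Z(f)$ as a quotient (and a section).

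For part (1), I would take $L$ irreducible and let $d$ be its top weight, $\hw_f(L) = L_d$. First I would show $L_d$ generates $L$ as a $G$-representation: the subrepresentation it generates is $\mathbb{G}_m$-stable, hence graded, and contains the top graded piece; if it were proper, the quotient would be a nonzero representation with top weight strictly less than $d$, but one checks using the $P(f)$-structure that any nonzero quotient of $L$ must retain the top weight space, contradiction — alternatively, argue directly that $L_d \neq 0$ is the image of $L$ under a suitable idempotent and irreducibility of $L$ forces $L_d$ to be irreducible over $Z(f)$ because any proper nonzero $Z(f)$-subobject $N \subsetneq L_d$ would generate a proper $G$-subobject of $L$ (its $G$-span lies in weights $\le d$ and meets weight $d$ exactly in $N$). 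This last implication is really the crux and is where the $P(f)$/Levi picture does the work: one needs that the weight-$d$ part of the $G$-subrepresentation generated by $N$ is exactly $N$, which follows because $P(f)$ acts on $N$ through its Levi quotient $Z(f)$ and the unipotent radical strictly raises... (in char-free language: strictly lowers nothing, so cannot produce new weight-$d$ vectors).

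For part (2), injectivity, I would reconstruct $L$ from $\hw_f(L)$: form the induced representation $\ind_{P(f)}^{G}$ of $\hw_f(L)$ inflated along $Z(f) \leftarrow P(f)$, and show $L$ is its unique irreducible quotient (a "Weyl module" style argument), or more economically observe that $L$ is the unique irreducible $G$-representation compatible with $\phi$ whose top weight space is $\hw_f(L)$; if $L, L'$ had the same highest weight, then $L \otimes (L')^*$ would contain the trivial representation in its top-weight-$0$ component in a way forcing a nonzero $G$-map $L \to L'$, hence an isomorphism. For part (3), if $X = \hw_f(L)$ and $Y = \hw_f(M)$, then $\hw_f(L \otimes M) = X \otimes Y$ since the top weight of a tensor product is the sum of top weights and its eigenobject is the tensor product of the top eigenobjects; therefore every composition factor $W$ of $X \otimes Y$ appears in $\hw_f$ of some composition factor of $L \otimes M$ — but one must check $W$ is literally $\hw_f$ of that factor, not just a subquotient of it, which again uses that $\hw_f$ of a composition series is a filtration of $X \otimes Y$ by $Z(f)$-subobjects whose graded pieces are exactly the $\hw_f(L_j)$ (this uses part (1) applied to each $L_j$, plus exactness of the top-weight functor on the subcategory of representations with top weight $\le \deg X + \deg Y$).

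The main obstacle I expect is establishing the $P(f)$-with-Levi-$Z(f)$ structure rigorously in the tensor-categorical setting — in particular that $\lim_{t\to 0} f(t) g f(t)^{-1}$ defines a representable closed subgroup scheme and that $P(f) \to Z(f)$ is split surjective — since the usual proofs over a field invoke pointwise arguments that must be replaced by functor-of-points / coordinate-ring computations valid over every $A \in \CommAlg\,\mcal{C}$. Once that is in place, parts (1)–(3) follow the classical highest-weight playbook almost verbatim, with the one subtlety (flagged above) that "eigenobject with largest eigenvalue" must be shown to be an exact functor on the relevant truncated subcategory so that composition series are respected.
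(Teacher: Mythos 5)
Your proposal is correct in substance and follows the same classical highest-weight playbook as the paper: decompose into $\mathbb G_m$-eigenobjects, show a proper $Z(f)$-subobject of the top eigenobject generates a proper $G$-subobject (part 1), recover $L$ as the unique irreducible quotient of a universal highest-weight module (part 2), and observe that the top eigenobject of a tensor product is the tensor product of the top eigenobjects (part 3). The one genuine difference is how the triangular structure is obtained. You propose the dynamic method, defining $P(f)$ by the existence of $\lim_{t\to 0}f(t)gf(t)^{-1}$, and you correctly flag that establishing representability of this limit subgroup in the tensor-categorical setting is the main obstacle. The paper sidesteps this entirely: it chooses a faithful representation $X$, embeds $G\hookrightarrow\GL(X)$, decomposes $X$ into $\mathbb G_m$-eigenobjects, and takes the explicit block-triangular subgroups $N_-,Z,N_+$ of $\GL(X)$ (matrix entries in $X_i\otimes X_j^*$); the multiplication map $N_-\times Z\times N_+\to G$ is an open immersion, which yields a PBW factorization $\Dist(G)\cong\Dist(N_-)\otimes\Dist(Z)\otimes\Dist(N_+)$, and all three parts are then run at the level of distribution algebras (the Verma module is $\Dist(G)\otimes_{\Dist(B_+)}M$). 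This buys concreteness and avoids the functor-of-points representability argument you would need; your route, if completed, would be more intrinsic (independent of the choice of $X$) but requires exactly the foundational work you identify. Your secondary argument for part 2 via $L\otimes(L')^*$ containing the trivial representation is sketchier than the Verma-module argument you also offer, and I would rely on the latter.
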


\begin{proof}
    Let $X$ be a faithful $G$-representation, giving a map $G \hookrightarrow \GL(X)$. Then $f$ induces a $\mathbb G_m$-action on $X$, so we can decompose $X$ into $\mathbb G_m$-eigenobjects as $X = \bigoplus_{i = 1}^n X_i$ so that $t \in \mathbb G_m$ acts as $\diag(t^{m_1}, \cdots, t^{m_n})$. Therefore $\text{GL}(X)$ can be thought of as the group of invertible $n \times n$ matrices where the $ij$th matrix entry is in $X_i \otimes X_j^*$. 

    Let $Z$ be the subgroup of such diagonal matrices (i.e. the only nonzero entries are in $X_i \otimes X_i^*$), $N_+$ be the subgroup of upper triangular matrices with 1s on the diagonal, and $N_-$ be the subgroup of lower triangular matrices with 1s on the diagonal. Then $Z = Z(f)$, and the multiplication map $N_- \times Z \times N_+ \to G$ is an open immersion of schemes. Hence this is an isomorphism on a formal neighborhood of the identity and we get a ``PBW factorization'' for $\Dist(G)$ as 
    \begin{equation} \label{eq:pbw}
        \Dist(G) \cong \Dist(N_-) \otimes \Dist(Z) \otimes \Dist(N_+)
    \end{equation}
    as objects: the multiplication map from the right-hand side to the left-hand side is an isomorphism.

    $\Dist(G)$ also has a weight decomposition with respect to the adjoint action of $\im f$. In this grading, $\Dist(Z)$ has weight 0. The augmentation ideal of $\Dist(N_+)$ is a direct sum of strictly positive weights, and likewise the augmentation ideal of $\Dist(N_-)$ is a direct sum of strictly negative weights. 

    Then statement 1 follows from this PBW factorization \eqref{eq:pbw}: if $M \subset \hw_f(L)$ is a proper $\Dist(Z)$ submodule, then the $G$-module generated by $M$ will be a proper submodule of $L$, since $N_+$ acts trivially on $M$ while $Z$ fixes $M$ and $N_-$ lowers the $\mathbb G_m$-eigenvalue, i.e. it acts trivially on the quotient by lower weights.
    

   Set $B_+ = Z \times N_+$; then the multiplication map is an isomorphism of ind-objects 
   \begin{equation*}
       \Dist(B_+) \cong \Dist(Z) \otimes \Dist(N_+).
   \end{equation*}
   Given a highest weight $M$, we can therefore consider the Verma module $\Dist(G) \otimes_{\Dist(B_+)} M$. Then the map $L \mapsto \hw_f(L)$ is injective because this Verma module has the usual universal property, and has a unique maximal submodule, hence unique irreducible quotient when $M$ is an irreducible $Z$-representation. This proves statement 2.
    
    For statement 3, suppose that $L(X)$ and $L(Y)$ are the simple $G$-representations with highest weights $X, Y$ respectively. Then $L(X) \otimes L(Y)$ has highest weight $X \otimes Y$, and by the same argument in the proof of statement 1, the composition factors of $L(X) \otimes L(Y)$ will have highest weights the respective composition factors of $X \otimes Y$.
\end{proof}

\begin{corollary}
    Because irreducible $B_+$-modules are in bijection with irreducible $Z$-modules, we can also interpret $L \mapsto \hw_f(L)$ as an injective map $\hw_f: \Irrep(G, \phi) \to \Irrep(B_+, \phi)$.
\end{corollary}

\begin{proposition}
An irreducible $B_+$-module $V$ lies in the image of $\hw_f$ iff $\Hom_{B_+}(V, \mcal{O}(G))$ is nonzero.
\end{proposition}
\begin{proof}
    Suppose there exists some nonzero $h \in \Hom_{B_+}(V, \mcal{O}(G))$. Let $M$ be the $G$-module generated by $h$ in $\mcal{O}(G)$. It will have highest weight $V$, since the $G$-action can only lower the $\mbb{G}_m$-eigenvalue by the same argument as in the proof of Theorem \ref{highest_weight_thm}. Because $M$ is a subobject of $\mcal{O}(G)$, which is an ind-object, $M$ will be an honest object in $\mcal{C}$. Therefore, $V$ lies in the image of $\hw_f$.

    Conversely, suppose that $V$ lies in the image of $\hw_f$, so there exists some irreducible $G$-module $M$ with highest weight $V$. Then the matrix coefficients of $M_V \subset M$, the subobject of $M$ with weight $V$, will provide the $B_+$-invariant map $V \to \mcal{O}(G)$.
\end{proof}

\section{Examples} \label{examples}

In this section, we give examples of commutative algebras and affine group schemes in $\Ver_4^+$ that we will consider in the rest of the paper.

\subsection{Symmetric algebras}

Every object $X$ of a symmetric tensor category is canonically an abelian affine group scheme. Its underlying Lie algebra is the abelian Lie algebra with underlying object $X$, and its coordinate ring is $\Sym(X^*)$.

For example, if $X = P \cong P^*$, the algebra $\Sym P$ is the polynomial algebra $k[x, x']$ (that is, as an ordinary algebra it is $k[x,y]/y^2$ and the $d$-action is $y = x'$). $\Sym^n P$ has basis $x^n, x^{n-1}x'$, so
\begin{equation*}
    \Sym^n P \cong \begin{cases}
        P & n \text{ odd} \\
        \1 \oplus \1 & n \text{ even}.
    \end{cases}
\end{equation*}

\subsection{$\mathbb G_a, \mathbb G_m$, and friends}

Classically, the additive group $\mathbb G_a$ has coordinate algebra $k[T]$ and is the functor sending an algebra $A$ to the group $(A, +)$. Likewise, the multiplicative group $\mathbb G_m$ has coordinate algebra $k[T, T^{-1}]$ and sends $A$ to $A^\times$. Note that in $\Ver_4^+$, the functor represented by $k[T]$ with $dT = 0$ sends $A \mapsto (\ker d \in A, +)$, while the functor $A \mapsto (A, +)$ is represented by $k[T, T'] = \Sym P$, so these are different groups.

\begin{definition}
Let $\mathbb G_a$ refer to the affine group scheme taking $A \mapsto (\ker d \in A, +)$. Let $\mathbb G'_a$ refer to the affine group scheme taking $A \mapsto (A, +)$. 
\end{definition}
The coordinate ring of $\mathbb G_a$ is $k[T]$. The coordinate ring of $\mathbb G'_a$ is $k[T, T'] = \Sym P$, $T$ and $T'$ primitive, and its Lie algebra is abelian with underlying object $P$, so it is the abelian group scheme associated with $P$.
\begin{definition}
Let $\mathbb G_m$ refer to the affine group scheme with $\mathbb G_m(A) = \ker d |_{A^\times}$, i.e. invertible elements in the kernel of $d$. Let $\mathbb G'_m$ refer to the affine group scheme taking $A$ to $A^\times$.
\end{definition}
The coordinate ring of $\mathbb G_m$ is $k[T, T^{-1}]$, $T$ grouplike. The coordinate ring of $\mathbb G'_m$ is $k[T, T^{-1}, T']$, $T$ grouplike. Its Lie algebra's underlying object is again $P$, but it has a nonzero bracket, $[x, x] = x'$. 
\begin{remark}
    Note that $A$ is a commutative algebra in $\mcal{C}$, but not in the usual sense in general, so $A^\times$ is a non-abelian abstract group. Hence the group scheme $\mathbb G'_m$ is non-abelian. 
\end{remark}

\begin{definition}
Let $M_1$ denote the affine group scheme taking $A$ to $A$ with group operation $a * b = a + b + a'b$.
\end{definition}
$M_1$ has coordinate ring $k[X, X']$, $\Delta(X) = X \otimes 1 + 1 \otimes X + X' \otimes X$, $S(X) = X + XX'$. Therefore, $1 + X'$ is grouplike. The Lie algebra of this group has underlying object $P$, with nonzero brackets $[x, x'] = [x', x] = x'$.

Since $\Ver_4^+$ is a reduction of $\sVec$ to characteristic 2, let us compare this to the corresponding groups in characteristic $0$ in $\sVec$. For the rest of the subsection, we let $k = \mbb{Q}_2(\sqrt{2})$.
The following two propositions are well-known statements about low-dimensional Lie superalgebras and supergroups.



\begin{proposition}
Let $\mfrak{g} = L$ as an abelian Lie algebra. Consider the supergroup $G_a^{1|1}$ with $\mathcal{O}(G) = k[X, \Xi]$, $X$, $\Xi$ both primitive, $X$ even and $\Xi$ odd, which has Lie algebra $\mfrak{g}$ and sends a supercommutative algebra $A$ to $(A, +)$. This supergroup degenerates to $\mathbb G'_a$ in $\Ver_4^+$.
\end{proposition}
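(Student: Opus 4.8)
The phrase ``degenerates to'' should be read in the sense of Subsection 2.3.1: one exhibits a Hopf $\mathcal{O}$-order inside the $k$-Hopf algebra $\mathcal{O}(G_a^{1|1})$ (with $\mathcal{O} = \mathbb{Z}_2[\sqrt 2]$, $k = \mathbb{Q}_2(\sqrt 2)$) whose reduction modulo $\sqrt 2$ is $\mathcal{O}(\mathbb{G}'_a)$ in $\Ver_4^+(\mathbb{F}_2)$, and then extends scalars. The plan is to mirror, on coordinate rings, the way $k^{1|1}$ reduces to $P$. Recall that $k^{1|1} = \langle v, w\rangle$ ($v$ even, $w$ odd) reduces to $P$ \emph{not} via the naive lattice $\mathcal{O}v + \mathcal{O}w$ but via $\mathcal{O}(v+w) + \mathcal{O}\,d(v+w)$, i.e. using the inhomogeneous vector $p = v+w$ and $q = dp = \sqrt 2\, w$. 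Since $G_a^{1|1}$ is the abelian group scheme attached to the object $k^{1|1}$, with $\mathcal{O}(G_a^{1|1}) = \Sym((k^{1|1})^*) = k[X,\Xi]$ and $X,\Xi$ primitive, while $\mathbb{G}'_a$ is the abelian group scheme attached to $P$, with $\mathcal{O}(\mathbb{G}'_a) = \Sym(P^*) = \Sym P = k[T,T']$ and $T,T'$ primitive, it suffices to carry out the coordinate-ring version of this reduction.

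Concretely, I would set $T := X + \Xi$ and $T' := dT = \sqrt 2\,\Xi$ (using $dX = 0$, $d\Xi = \sqrt 2\,\Xi$, since $d = (1-g)/\sqrt 2$ acts by parity), and let $R := \mathcal{O}[T,T'] \subset k[X,\Xi]$. The first point is that $R$ is a full-rank $\mathcal{O}$-lattice, free on $\{T^n, T^nT' : n\ge 0\}$: expanding $T^n = X^n + nX^{n-1}\Xi$ (as $\Xi^2 = 0$) and $T^nT' = \sqrt 2\,X^n\Xi$ shows these are $\mathcal{O}$-linearly independent and span $k[X,\Xi]$ over $k$, and the multiplication rule $T^mT'\cdot T^nT' = T^{m+n}(T')^2 = 0$ (as $(\sqrt 2\,\Xi)^2 = 0$) identifies $R \cong \mathcal{O}[T]\otimes_{\mathcal{O}}\Lambda_{\mathcal{O}}[T']$. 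The second point is that $R$ is a Hopf order: since $X,\Xi$ are primitive, $\Delta T = T\otimes 1 + 1\otimes T$ and $\Delta T' = \sqrt 2(\Xi\otimes 1 + 1\otimes\Xi) = T'\otimes 1 + 1\otimes T'$, while $S(T) = -T$, $S(T') = -T'$ and $\epsilon$ kills both, so $\Delta$, $S$, $\epsilon$ all preserve $R$.

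It remains to compute $\bar R := R\otimes_{\mathcal{O}}\mathbb{F}_2$. From the above it is $\mathbb{F}_2[\bar T,\bar T']/((\bar T')^2)$, and since $dT = T'$ and $dT' = \sqrt 2\,T'$ we get $d\bar T = \bar T'$, $d\bar T' = 0$ after reduction --- precisely the algebra $\Sym P$ in $\Ver_4^+(\mathbb{F}_2)$. It is honestly commutative, hence commutative in $\Ver_4^+$, because $\im d = \bar T'\,\mathbb{F}_2[\bar T]$ squares to zero so that $a'b' = 0$ for all $a,b$; and $\bar T,\bar T'$ remain primitive with $\epsilon$ vanishing. Thus $\bar R = \mathcal{O}(\mathbb{G}'_a)$, and base-changing along $\Ver_4^+(K) = \Ver_4^+(\mathbb{F}_2)\otimes_{\mathbb{F}_2}K$ gives the statement. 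The one point requiring care is checking that $R$ is the \emph{correct} (saturated, full-rank) order, so that $\bar R$ is the genuine reduction rather than a proper quotient of it --- this is exactly the assertion that $T = X+\Xi$, $T' = dT$ is the coordinate-ring shadow of the substitution $p = v+w$, $q = dp$ that produces $P$; once this is pinned down the rest is routine bookkeeping. One can also argue conceptually: the reduction procedure respects duality and symmetric powers, hence carries the abelian group scheme attached to $k^{1|1}$ to the one attached to $P$, i.e. $G_a^{1|1}$ to $\mathbb{G}'_a$, and the explicit order above is then merely a witness for this.
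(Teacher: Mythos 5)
Your proposal is correct and follows essentially the same route as the paper: the paper also sets $p = X + \Xi$, $q = dp = \sqrt{2}\,\Xi$, observes both are primitive with $dp = q$, $dq = \sqrt{2}\,q$, and reduces the resulting order modulo $\sqrt{2}$ to obtain $\mathcal{O}(\mathbb{G}'_a) = k[T,T']$. Your additional checks (that the order is a full-rank free $\mathcal{O}$-lattice and genuinely a Hopf order) are sound elaborations of steps the paper leaves implicit.
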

\begin{proof}
    Let $k = \mbb{Q}_2[\sqrt{2}]$ and $k[\mathbb Z / 2 \mathbb Z] = \langle 1, 
g \rangle$.
    Since $X$ is even and $\Xi$ is odd, $gX = X$, $g\Xi = -\Xi$. Moreover, $d = \frac{1 - g}{\sqrt{2}}$, so $dX = 0$ and $d\Xi = \sqrt{2}\Xi$. Consider $p = X + \Xi$ and $q = \sqrt{2}\Xi$, which also generate $\mathcal{O}(G)$. Then $dp = q, dq = \sqrt{2}q$, and $p, q$ are both primitive.
    
    Reducing modulo $\sqrt{2}$, we get the commutative Hopf algebra $k[p, q]$ with $dp = q, dq = 0$, and $p, q$ primitive, which is the coordinate ring of $\mathbb G'_a$.
\end{proof}
\begin{proposition}
    Consider the affine supergroup $\Aff(0, 1)$ of affine transformations of $k^{0|1}$. Its coordinate ring is the commutative Hopf superalgebra over $k$ generated by $Y$ (even, invertible, grouplike) and $Z$ odd with $\Delta(Z) = Z \otimes 1 + Y \otimes Z$. Then the lattice generated by $X := Z + \frac{Y - 1}{\sqrt{2}}$ and $\Xi = \sqrt{2}Z$ degenerates to $M_1$.
\end{proposition}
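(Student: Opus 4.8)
The plan is to exhibit $\mcal{O}(M_1)$ as the reduction modulo $\sqrt{2}$ of an $\mcal{O}$-Hopf order inside $\mcal{O}(\Aff(0,1))$, in direct analogy with the proof for $\mbb{G}'_a$ above. Throughout, $\mcal{O} = \mbb{Z}_2[\sqrt{2}]$ is a complete discrete valuation ring with uniformizer $\sqrt{2}$ and residue field $\mbb{F}_2$, $k = \mbb{Q}_2(\sqrt{2})$, the group $k\mbb{Z}/2\mbb{Z} = \langle 1, g\rangle$ acts, and $d = \frac{1-g}{\sqrt{2}}$. Recall $\mcal{O}(\Aff(0,1)) = k[Y^{\pm 1}, Z]$ with $Z^2 = 0$ (odd elements of a supercommutative algebra square to zero), $Y$ grouplike and even, $Z$ odd, $\Delta(Z) = Z\otimes 1 + Y\otimes Z$, $\epsilon(Y) = 1$, $\epsilon(Z) = 0$. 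First I would record that, since $Y$ is even and $Z$ is odd, $dY = 0$ and $dZ = \sqrt{2}Z$, so with $X = Z + \frac{Y-1}{\sqrt{2}}$ and $\Xi = \sqrt{2}Z$ we get
\begin{equation*}
dX = \Xi, \qquad d\Xi = \sqrt{2}\,\Xi, \qquad \Xi^2 = 0, \qquad Y = 1 + \sqrt{2}X - \Xi.
\end{equation*}
Let $R \subset \mcal{O}(\Aff(0,1))$ be the $\mcal{O}$-subalgebra generated by $X$, $\Xi$ and $Y^{-1}$; note $Y = 1+\sqrt2 X-\Xi \in R$, that $R\otimes_{\mcal{O}} k = \mcal{O}(\Aff(0,1))$, and that $R$ is $\mcal{O}$-flat (being an $\mcal{O}$-submodule of a $k$-vector space over a DVR), so reduction modulo $\sqrt{2}$ behaves well.

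Next I would check directly that $R$ is a Hopf $\mcal{O}$-order. Using $\Delta(Y) = Y\otimes Y$, $\Delta(Z) = Z\otimes 1 + Y\otimes Z$ and the identity $Y\otimes Y - 1\otimes 1 = (Y-1)\otimes Y + 1\otimes(Y-1)$, one computes
\begin{align*}
\Delta(\Xi) &= \Xi\otimes 1 + 1\otimes\Xi + \sqrt{2}\,(X\otimes\Xi) - \Xi\otimes\Xi,\\
\Delta(X) &= X\otimes 1 + 1\otimes X + \sqrt{2}\,(X\otimes X) - \Xi\otimes X,
\end{align*}
with $\epsilon(X) = \epsilon(\Xi) = 0$; all terms lie in $R\otimes_{\mcal{O}} R$. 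For the antipode, $S(Y) = Y^{-1}$ and $S(Z) = -Y^{-1}Z$, whence a short manipulation (using $Y^{-1}(1+\sqrt{2}X) = 1 + Y^{-1}\Xi$) gives $S(X) = -XY^{-1}$ and $S(\Xi) = -\Xi Y^{-1}$, both in $R$. Commutativity of $R$ in $\Ver_4^+$ is inherited from supercommutativity of $\mcal{O}(\Aff(0,1))$ under Venkatesh's reduction, so $R$ is a commutative Hopf $\mcal{O}$-order; it is the $\mcal{O}$-Hopf order ``generated by $X$ and $\Xi$'' in the sense of the proposition.

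To finish, I would reduce modulo $\sqrt{2}$. We have $\overline{Y} = 1 - \overline{\Xi} = 1 + \overline{\Xi}$, and since $\overline{\Xi}^2 = 0$ this satisfies $(1+\overline{\Xi})^2 = 1$, so $\overline{Y}$ is already a unit in $\mbb{F}_2[X,\Xi]/(\Xi^2)$ and adjoining $Y^{-1}$ becomes vacuous after reduction; hence $R\otimes_{\mcal{O}}\mbb{F}_2 = \mbb{F}_2[X,\Xi]/(\Xi^2)$ with $dX = \Xi$ and $d\Xi = 0$. Setting $X' := \Xi$, this is exactly the coordinate object $k[X,X']$ of $M_1$, and the displayed formulas reduce to $\Delta(X) = X\otimes 1 + 1\otimes X + X'\otimes X$ and $S(X) = -X(1+X') = X + XX'$, matching the definition of $M_1$ (and $d$-equivariance of $\Delta$ and $S$ then forces $\Delta(X') = X'\otimes 1 + 1\otimes X' + X'\otimes X'$ and $S(X') = X'$). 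Base-changing along $\mbb{F}_2 \to K$ via $\Ver_4^+(K) = \Ver_4^+(\mbb{F}_2)\otimes_{\mbb{F}_2} K$ gives the statement over an arbitrary $K$.

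The point requiring the most care is the choice of $\mcal{O}$-order: one must adjoin $Y^{-1}$ to close under the antipode even though it is not a polynomial in $X$ and $\Xi$, verify that $\Delta$ and $S$ still land inside the resulting order, and confirm that modulo $\sqrt{2}$ the localization at $Y$ collapses, so that the reduction is precisely the polynomial-type Hopf algebra $\mcal{O}(M_1)$ and neither larger nor degenerate. The only genuinely non-routine identity is $S(X) = -XY^{-1}$; the rest is bookkeeping with the explicit comultiplication and the $d$-action.
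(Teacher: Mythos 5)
Your proof is correct and follows essentially the same route as the paper: compute the $d$-action and the comultiplication on $X$ and $\Xi$, then reduce the resulting $\mcal{O}$-lattice modulo $\sqrt{2}$ to recover $\mcal{O}(M_1)$. The extra care you take with the antipode and with adjoining $Y^{-1}$ (and checking the localization collapses after reduction) is a welcome refinement of details the paper leaves implicit, not a different argument.
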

\begin{proof}
    We can check that $dY = 0$ and $dZ = \sqrt{2}Z$, so $dX = \Xi$ and $d\Xi = \sqrt{2}\Xi$. We compute that
    \begin{align*}
        \Delta(X) &= Z\otimes 1+Y\otimes Z+(Y-1)\otimes \frac{1}{\sqrt{2}}+1\otimes \frac{Y-1}{\sqrt{2}}+(Y-1)\otimes \frac{Y-1}{\sqrt{2}} \\
        &= X\otimes 1+1\otimes X+(Y-1)\otimes Z+(Y-1)\otimes \frac{Y-1}{\sqrt{2}} \\
        &= X\otimes 1+1\otimes X+X\otimes \Xi+\sqrt{2}X\otimes X-\Xi\otimes X-X\otimes \Xi \\
        &= X\otimes 1+1\otimes X+\sqrt{2}X\otimes X-\Xi\otimes X
    \end{align*}
    and
    \begin{align*}
        \Delta(\Xi) &= \sqrt{2}Z\otimes 1+\sqrt{2}Y\otimes Z \\
        &=\Xi \otimes 1+1\otimes \Xi+\sqrt{2}X\otimes \Xi-\Xi\otimes X'.
    \end{align*}
    Therefore, on reducing the lattice generated by $X, \Xi$ modulo $\sqrt{2}$, we get $M_1$. 
\end{proof}
\begin{remark}
    The Lie algebra structure on $k^{1|1}$ with $x$ even and $\xi$ odd and nonzero bracket $[x, \xi] = \xi$ reduces to the Lie algebra of $M_1$. Consider $v = x + \xi$ and $w = \sqrt{2}\xi$. Then $dv = w$ and $dw = \sqrt{2}w$. Moreover, $[v, v] = 0$, $[v, w] = w$, and $[w, w] = 0$. Reducing modulo $\sqrt{2}$, we have $dv = w$, $dw = 0$, and $[v, w] = w$ is the only nonzero bracket, which is $\mathfrak{aff}(0, 1)$, the Lie algebra of the group $\Aff(0, 1)$ of affine transformations of $k^{(0, 1)}$.
\end{remark}


\begin{proposition}
    The $(1|1)$-dimensional supersymmetry group $S(1|1)$ taking a supercommutative algebra $A$ to $A^\times$ in characteristic $0$ degenerates to $\mathbb G'_m$ in characteristic 2.
\end{proposition}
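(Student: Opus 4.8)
The plan is to follow the same scheme as the preceding propositions: exhibit an explicit $\mathcal O := \mathbb Z_2[\sqrt 2]$-lattice inside the coordinate Hopf superalgebra of $S(1|1)$ over $k = \mathbb Q_2(\sqrt 2)$, stable under comultiplication, counit, antipode and the operator $d = \tfrac{1-g}{\sqrt 2}$, and then identify its reduction modulo $\sqrt 2$ with $\mathcal O(\mathbb G_m')$.

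First I would write down $\mathcal O(S(1|1))$. Writing an element of $A^\times$ (for supercommutative $A$) as $x = x_0 + x_1$ with $x_0 \in A_0^\times$, $x_1 \in A_1$, and letting $Y,\Theta$ be the coordinate functions $x\mapsto x_0$ and $x\mapsto x_1$, one gets $\mathcal O(S(1|1)) = k[Y, Y^{-1}, \Theta]/(\Theta^2)$ with $Y$ even and $\Theta$ odd, and expanding $x\mapsto xy$ componentwise gives
\begin{align*}
\Delta(Y) &= Y\otimes Y + \Theta\otimes\Theta, & \Delta(\Theta) &= Y\otimes\Theta + \Theta\otimes Y, \\
S(Y) &= Y^{-1}, & S(\Theta) &= -Y^{-2}\Theta,
\end{align*}
with $\epsilon(Y) = 1$, $\epsilon(\Theta) = 0$. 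The key observation is that the tautological function $T := Y + \Theta\colon A^\times\hookrightarrow A$ is grouplike, $\Delta(T) = T\otimes T$ — this just records that the group law on $A^\times$ is multiplication in $A$ — and that $\{T^{\pm 1}, \Theta\}$ generates the same algebra as $\{Y^{\pm 1}, \Theta\}$.

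Then I would pass to the integral picture. The $\mathbb Z/2\mathbb Z$-generator $g$ acts by the parity grading ($gY = Y$, $g\Theta = -\Theta$), so $d$ acts by $dY = 0$, $d\Theta = \sqrt 2\,\Theta$, hence $dT = \sqrt 2\,\Theta$. Setting $q := \sqrt 2\,\Theta = dT$ gives $dq = \sqrt 2\, q$, and applying $d$ to $\Delta(T) = T\otimes T$ (using that $d$ is not primitive over $\mathcal O$, since $g = 1 - \sqrt 2\, d$ is grouplike) yields $\Delta(q) = q\otimes T + T\otimes q + (\text{a term divisible by }\sqrt 2)$; also $S(q) = \sqrt 2\, S(\Theta) = -Y^{-2}q$. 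Now take the lattice $\Lambda := \mathcal O[T, T^{-1}, q]/(q^2)$, a free $\mathcal O$-module on $\{T^i q^\varepsilon : i\in\mathbb Z,\ \varepsilon\in\{0,1\}\}$. The formulas show $\Lambda$ is stable under $\Delta$, $\epsilon$ and $d$; for the antipode one checks $Y^{-2} = T^{-2} + \sqrt 2\, T^{-3}q \in\Lambda$, so $S(q) = -Y^{-2}q = -T^{-2}q\in\Lambda$ and $S(T) = T^{-1}\in\Lambda$, making $\Lambda$ a Hopf order. Reducing modulo $(\sqrt 2)$ — whereupon $d^2 = \sqrt 2\, d\equiv 0$ and $d$ becomes primitive — gives $\Lambda/\sqrt 2\Lambda = \mathbb F_2[\bar T, \bar T^{-1}, \bar q]/(\bar q^2)$ with $\bar T$ grouplike, $\Delta(\bar q) = \bar q\otimes\bar T + \bar T\otimes\bar q$, $d\bar T = \bar q$, $d\bar q = 0$; after base change to $k$ this is precisely $\mathcal O(\mathbb G_m') = k[T, T^{-1}, T']$ with $T' = dT$, so $S(1|1)$ degenerates to $\mathbb G_m'$.

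The one place calling for care — and where a wrong guess spoils the argument — is the choice of lattice: the naive choice $\mathcal O[T^{\pm 1}, \Theta]$ reduces to an object of $\on{Vec}\subset\Ver_4^+$ (it has $d\bar\Theta = 0$), whereas the lattice generated by $q = dT$ is what produces the $P$-structure ($d$ carrying one generator to the other and annihilating it) in the reduction; and one must verify, despite $Y^{\pm 1}\notin\Lambda$, that this lattice is still closed under the antipode. The rest, including the suppressed Koszul signs and the exact $\sqrt 2$-coefficients, is routine. As a sanity check — compare the degeneration of $\mathfrak{aff}(0,1)$ to $\Lie(M_1)$ — the abelian Lie superalgebra $\mathfrak s(1|1) = k^{1|1}$ should degenerate to the Lie algebra of $\mathbb G_m'$, whose nonzero bracket $[x,x] = x'$ is a purely characteristic-$2$ feature invisible over $\mathbb Q_2(\sqrt 2)$.
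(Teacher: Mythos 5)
Your proposal is correct and follows essentially the same route as the paper: it passes to the generators $T = Y + \Theta$ (the paper's $p = X+Y$) and $q = \sqrt{2}\,\Theta$, observes $T$ is grouplike and $\Delta(q) \equiv q\otimes T + T\otimes q$ modulo $\sqrt{2}$, and reduces the $\mathcal{O}$-lattice they generate to obtain $\mathcal{O}(\mathbb G'_m)$. The only differences are cosmetic embellishments (deriving $\Delta(q)$ by applying $d$ to $\Delta(T)$ rather than computing it directly, and explicitly verifying antipode stability of the lattice, e.g. $S(q) = -T^{-2}q$), which the paper leaves implicit.
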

\begin{proof}
Let $k = \mbb{Q}_2[\sqrt{2}]$ and $k[\mathbb Z / 2 \mathbb Z] = \langle 1, 
g \rangle$. Let $A$ be a supercommutative algebra. Every invertible element of $A$ can be written uniquely as $a + \alpha$, $a$ even and invertible, $\alpha$ odd (such an element has inverse $a - \frac{\alpha}{a^2}$). Thus, $\mcal{O}(G) = k[X, Y, X^{-1}]$, where $X$ is even, $Y$ is odd, and 
\begin{align*}
    \Delta(X) = X \otimes X + Y \otimes Y \\
    \Delta(Y) = Y \otimes X + X \otimes Y.
\end{align*}

    Since $X$ is even and $Y$ is odd, $gX = X, gY = -Y$. Moreover, $d = \frac{1 - g}{\sqrt{2}}$, so $dX = 0$ and $dY = \sqrt{2}Y$. Let $p = X + Y$ and $q = \sqrt{2}Y$, so
    \begin{align*}
        \Delta(p) &= p \otimes p, \\
        \Delta(q) &= q \otimes p + p \otimes q - \sqrt{2}q \otimes q,
    \end{align*}
    while $dp = q, dq = \sqrt{2}q$.

    Then reducing the lattice generated by $p, q$ modulo $\sqrt{2}$, we get $k[p, q]$ with $dp = q$, $dq = 0$, $\Delta(p) = p \otimes p$, $\Delta(q) = q \otimes p + p \otimes q$. This is $\mcal{O}(\mathbb G'_m)$.
\end{proof}

\begin{remark}
    The Lie superalgebra of this group has only one nonzero bracket, $[\mu_Y, \mu_Y]$, where $\mu_Y$ is the dual element to $Y$. This Lie superalgebra is $\mfrak{s}(1|1)$, the Lie algebra of $S(1|1)$.
\end{remark}

\subsection{General linear groups and subgroups}
For $X \in \Ver_4^+$, the general linear group $\GL(X)$ is the group sending $A$ to  $\End_A(X \otimes A)^\times$. Suppose $X \cong m + nP$. Then $\GL(m+nP)$ can be thought of as the group of invertible (block) matrices of the form
\begin{equation*}
    \begin{pmatrix}
        m \1 \otimes (m \1)^* & m \1 \otimes (nP)^* \\
        nP \otimes (m \1)^* & nP \otimes (nP)^*
    \end{pmatrix}.
\end{equation*}

Concretely, say $m\1$ has basis $y_1, \dots, y_m$ and $nP$ has basis $x_1, \dots, x_n, x'_1, \dots, x'_n$. Then $A$-automorphisms $(m+nP) \otimes A \to (m+nP) \otimes A$ are determined by the images of $y_i \otimes 1$ and $x_i \otimes 1$, since $d(x_i \otimes 1) = x'_i \otimes 1$. 
\begin{proposition}
As matrices, the elements of $\GL(m+nP)(A)$ can be written as invertible $(m+2n) \times (m+2n)$ matrices of the form
\begin{equation*}
    \begin{pmatrix}
        F & C & C' \\
        B' & D & D' \\
        B & E & D+E'
    \end{pmatrix}
\end{equation*}
where $F \in \End(m\1)(A)$ (so $F' = 0$), $D, E \in \Mat_{n \times n}(A)$, $C \in \Mat_{m \times n}(A)$, $B \in \Mat_{n \times m}(A)$, and the group operation is multiplication of these matrices in the ordinary sense.
\end{proposition}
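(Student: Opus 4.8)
The plan is to compute directly what an $A$-linear endomorphism of $(m+nP)\otimes A$ must look like once we track the action of the derivation $d$. Recall that $P = k[d]/d^2$, so $d$ acts on $nP\otimes A$ and on $m\1\otimes A$ (trivially on the latter), and $A$-linearity together with the fact that $d$ is a morphism in $\mcal{C}$ forces any endomorphism $\varphi$ to commute with $d$ in the appropriate sense: $\varphi(dv) = d\varphi(v)$ since $d\otimes \id_A$ is the structure morphism. Concretely, I would fix the basis $y_1,\dots,y_m$ of $m\1$ and $x_1,\dots,x_n$ of the "generators" of $nP$, with $x_i' = dx_i$ forming the rest of a $k$-basis of $nP$; then $\varphi$ is completely determined by $\varphi(y_i\otimes 1)$ and $\varphi(x_i\otimes 1)$, because $\varphi(x_i'\otimes 1) = d(\varphi(x_i\otimes 1))$ is forced. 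This is exactly the sentence already in the excerpt, so the first step is just to make that observation precise.

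Next I would expand $\varphi(y_i\otimes 1)$ and $\varphi(x_i\otimes 1)$ in the basis $\{y_j\otimes 1, x_j\otimes 1, x_j'\otimes 1\}$ with coefficients in $A$, introducing names for the coefficient matrices: writing $\varphi(y_i\otimes 1)$ in terms of the $y_j$ gives a block $F$ with entries in $\ker d\subset A$ (since $y_i$ and $y_j$ are both $d$-closed and $\varphi$ commutes with $d$, forcing $F' = 0$), writing the $x_j$- and $x_j'$-components of $\varphi(y_i\otimes 1)$ gives blocks I would call $B'$ and $B$ — the point being that applying $d$ to $\varphi(y_i\otimes 1)$ must give $\varphi(d(y_i\otimes 1)) = 0$, which relates the $x_j$-coefficient of $d\varphi(y_i\otimes 1)$ to the $x_j'$-coefficient of $\varphi(y_i\otimes 1)$, i.e. literally forces the top-row coefficient appearing above $B$ to be $B'$ applied via $d$, with the extra Leibniz term $x_j'\otimes (\text{coeff})'$ producing the shift. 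Doing the same for $\varphi(x_i\otimes 1)$ yields the $3\times 3$ block pattern: the $y$-row coefficients $C, C'$ (again with $C'$ determined as $dC$), and the $x$/$x'$ rows $D, D'$ on the $x_j\otimes 1$ image and $D+E', \dots$ etc. on the $x_j'\otimes 1$ image, where the shape is dictated by $d(x_j\otimes c) = x_j'\otimes c + x_j\otimes c'$ combined with $\varphi(x_i'\otimes 1) = d\varphi(x_i\otimes 1)$. I would carry out this bookkeeping once carefully for a single basis vector and then state the general block form, checking that the stated matrix
\begin{equation*}
\begin{pmatrix} F & C & C' \\ B' & D & D' \\ B & E & D+E' \end{pmatrix}
\end{equation*}
is exactly what the constraints produce.

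Finally I would verify the converse and the group structure: every matrix of the displayed form, with $F' = 0$, does define an $A$-linear endomorphism commuting with $d$ (this is the same computation run backwards, using $(ab)' = a'b + ab'$ and $(a')^2 = 0$ from the commutative-algebra dictionary for $\Ver_4^+$), and composition of such endomorphisms corresponds to ordinary matrix multiplication of the $(m+2n)\times(m+2n)$ matrices — here one must check that the product of two matrices of this special shape again has the shape, which is where the identities $(a')^2 = 0$ and $ab = ba + a'b'$ get used to collapse the off-pattern terms. Then $\GL(m+nP)(A) = \End_A((m+nP)\otimes A)^\times$ is the group of invertible such matrices, as claimed.

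The main obstacle I anticipate is purely organizational rather than deep: getting the Leibniz-rule shifts exactly right so that the $(3,2)$ and $(3,3)$ blocks come out as $E$ and $D + E'$ (and the $(1,2)/(1,3)$ blocks as $C, C'$, etc.) with the correct primes and signs — in characteristic $2$ there are no signs, but one must be scrupulous that applying $d$ to a coefficient produces the $x_j\otimes (\text{coeff})'$ term in the right slot. A secondary subtlety is confirming that $A$-linearity of a morphism in $\Ver_4^+$ really does only impose commutation with $d$ and nothing more (e.g. no constraint from the braiding), which follows because the $A$-module structure map is a morphism in $\mcal{C}$ and morphisms in $\mcal{C}$ are precisely $k[d]/d^2$-module maps — worth a sentence but not an obstruction.
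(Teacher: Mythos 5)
Your proposal is correct and is essentially the argument the paper leaves implicit: the paper's only stated justification is the preceding observation that an $A$-automorphism is determined by the images of $y_i\otimes 1$ and $x_i\otimes 1$ because $d(x_i\otimes 1)=x_i'\otimes 1$, and your computation (expanding those images, imposing $d\varphi(y_i\otimes 1)=0$ and $\varphi(x_i'\otimes 1)=d\varphi(x_i\otimes 1)$ via the Leibniz rule, then checking closure under composition using $(a')^2=0$) is exactly the bookkeeping that observation calls for. The Leibniz shifts do come out as claimed, so there is no gap.
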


\begin{corollary}
    Such a block matrix is invertible iff $F$ and $D$ are invertible.
\end{corollary}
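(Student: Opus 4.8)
The plan is to reduce the invertibility of the big $(m+2n)\times(m+2n)$ matrix to the invertibility of its ``block-diagonal part'' $F$ and $D$, exploiting the fact that the entries involving primed variables are nilpotent-type corrections. First I would observe that, writing $N$ for the augmentation ideal of $\mathcal O(\GL(m+nP))$ (or working over an arbitrary test algebra $A$ and using that $x'$-type entries lie in $\operatorname{im} d$, which together with $(a')^2=0$ and the commutativity relation $ab=ba+a'b'$ forces strong nilpotence), the matrix
\begin{equation*}
M=\begin{pmatrix} F & C & C' \\ B' & D & D' \\ B & E & D+E' \end{pmatrix}
\end{equation*}
differs from the block matrix $\operatorname{diag}(F,D,D)$ (suitably interpreted, with the $B,C,E$ blocks present) by terms that are either off the relevant diagonal blocks or involve a factor from $\operatorname{im} d$. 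So I would first handle the ``classical'' part: forgetting the primed corrections, $M$ reduces to an ordinary block matrix whose invertibility is governed by $F$ and $D$ (the bottom-right $2n\times 2n$ block, modulo primed entries, is itself block-triangular with $D$ on the diagonal), giving the ``only if'' direction and the bulk of ``if''.

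The key steps, in order: (1) Show $F$ invertible is necessary: restrict $M$ to the action on $m\1\otimes A$ modulo $nP\otimes A$, i.e. pass to the quotient where the bottom two block-rows/columns are killed; invertibility of $M$ forces invertibility of the induced map $F$ on $m\1\otimes A$ (here one uses $F'=0$, so $F$ genuinely lives in $\End(m\1)(A)=\Mat_m(A_{\ker d})$, classical). (2) Show $D$ invertible is necessary: similarly, pass to $\Fr$ or more simply to the associated graded / quotient by $\operatorname{im} d$ — the bottom-right block becomes $\operatorname{diag}(D,D)$ up to strictly block-triangular terms, so $M$ invertible forces $D$ invertible over $A/\langle\operatorname{im} d\rangle$, and then a lifting argument (nilpotence of primed entries) upgrades this to $D$ invertible over $A$. (3) Conversely, assume $F,D$ invertible. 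First use invertibility of $F$ to clear the blocks $B,B'$ (left multiplication by a lower-triangular matrix with identity diagonal blocks and entries built from $F^{-1}$) and the blocks $C,C'$ (right multiplication similarly), reducing to a matrix of the form $\operatorname{diag}(F,\ \ast)$ where $\ast$ is a $2n\times2n$ matrix with corner blocks $D, D', E, D+E'$ modified only by primed-entry corrections. Then observe this $2n\times2n$ block is, after reordering, ``upper/lower triangular with $D$ on both diagonal blocks up to a correction in $\operatorname{im} d$'': explicitly it has the shape $\begin{pmatrix} D & D' \\ E & D+E'\end{pmatrix}$, and since $D$ is invertible one can row/column reduce (the Schur complement of $D$ is $(D+E') - E D^{-1} D' = D + E' - ED^{-1}D'$, and $E'$, $D'$ lie in $\operatorname{im} d$ so this Schur complement equals $D$ plus a nilpotent correction, hence is invertible). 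Assembling the elementary factors exhibits $M^{-1}$ explicitly, proving the ``if'' direction.

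The main obstacle I anticipate is step (2)/(3): making precise and rigorous the claim that the primed entries $B',C',D',E'$ are ``nilpotent corrections'' that can be systematically cleared or ignored, and that invertibility modulo these corrections lifts to genuine invertibility. This requires being careful about the noncommutativity of $A$ (recall $ab=ba+a'b'$, so $A^\times$ is a genuinely nonabelian group, per the remark before the statement), about working with the functor of points uniformly in $A$ rather than a single ring, and about the fact that ``$\operatorname{im} d$ is not an ideal in the naive sense'' — one must use that $\ker d$ is central and that products of two elements of $\operatorname{im} d$ vanish. Concretely I would formalize this by filtering $\End_A((m+nP)\otimes A)$ by powers of the two-sided ideal generated by $\operatorname{im} d$ (which is nilpotent of the relevant order since $(a')^2=0$), checking that a matrix is invertible iff its image in the associated graded is, and then reading off the graded condition as ``$F$ and $D$ invertible.'' Once that framework is set up, each individual computation (the explicit elementary factors, the Schur complement) is routine.
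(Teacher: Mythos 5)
Your proposal is correct and follows essentially the same route as the paper's (much terser) argument: the primed entries are nilpotent corrections that do not affect invertibility, and after discarding them the matrix is block-triangular with $F$, $D$, $D$ on the diagonal. The only refinement worth making is that for a general test algebra $A$ the full ideal generated by $\im d$ need not be nilpotent, but the ideal generated by the finitely many primed entries actually appearing in the matrix is (they are central, square-zero, and commute), which is all you need.
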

\begin{proof}
Because this matrix must be invertible, and invertibility is not affected by nilpotents, invertibility of this block matrix is equivalent to invertibility of
\begin{equation*}
    \begin{pmatrix}
        F & C & 0 \\
        0 & D & 0 \\
        B & E & D
    \end{pmatrix}
\end{equation*}
which is invertible iff $FD^2$ is invertible. Hence, both $F$ and $D$ must be invertible, and this is sufficient.
\end{proof}
\begin{corollary}\label{glm+np_lu}
Let $B_{m, n}$ be the parabolic subgroup sending $A$ to the group of block upper triangular matrices 
\begin{equation}
\label{b_mn_format}
    \begin{pmatrix}
        F & C & C' \\
        0 & D & D' \\
        0 & E & D+E'
    \end{pmatrix}
\end{equation}
with $F, D, E$ as above with entries in $A$. Identify $(\mathbb G'_a)^{mn}$ with $\Mat_{m \times n}(A)$.
Then the multiplication map $(\mathbb G'_a)^{mn} \times B_{m,n} \to \GL(m + nP)$ defined by 
\begin{equation*}
    B, (F, C, D, E) \mapsto \begin{pmatrix}
        I_m & 0 & 0 \\
        B' & I_n & 0 \\
        B & 0 & I_n
    \end{pmatrix}
    \begin{pmatrix}
        F & C & C' \\
        0 & D & D' \\
        0 & E & D+E'
    \end{pmatrix}
\end{equation*}
is an isomorphism of schemes.
\end{corollary}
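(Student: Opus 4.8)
The plan is to promote the multiplication map to an isomorphism of schemes by writing down an explicit inverse morphism. First I would carry out the block multiplication
\begin{equation*}
\begin{pmatrix} I_m & 0 & 0 \\ B' & I_n & 0 \\ B & 0 & I_n \end{pmatrix}
\begin{pmatrix} F & C & C' \\ 0 & D & D' \\ 0 & E & D+E' \end{pmatrix}
=
\begin{pmatrix} F & C & C' \\ B'F & B'C+D & B'C'+D' \\ BF & BC+E & BC'+D+E' \end{pmatrix},
\end{equation*}
and check that the right-hand side is again of the shape described in the structure Proposition above for a general element of $\GL(m+nP)(A)$; in particular the three blocks in the last column automatically come out as the $d$-images of the corresponding blocks, using $d^2=0$ and the Leibniz rule. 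This already shows the map in question is a morphism of schemes, since it is matrix multiplication applied to the element of $B_{m,n}(A)$ of the form \eqref{b_mn_format} and to the image of $B\in\Mat_{n\times m}(A)$ under $B\mapsto\left(\begin{smallmatrix} I_m & 0 & 0 \\ B' & I_n & 0 \\ B & 0 & I_n \end{smallmatrix}\right)$, which is a valid element of $\GL(m+nP)(A)$ because its diagonal blocks are invertible.

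Next I would invert it. Writing an arbitrary element of $\GL(m+nP)(A)$ as $\left(\begin{smallmatrix} F_0 & C_0 & C_0' \\ B_0' & D_0 & D_0' \\ B_0 & E_0 & D_0+E_0' \end{smallmatrix}\right)$ with $F_0,D_0$ invertible (as forced by the invertibility criterion, the Corollary above), and matching this block by block with the product just computed, pins the preimage down uniquely:
\begin{equation*}
F=F_0,\qquad C=C_0,\qquad B=B_0F_0^{-1},\qquad D=D_0+B_0'F_0^{-1}C_0,\qquad E=E_0+B_0F_0^{-1}C_0.
\end{equation*}
For existence I would then check that, with these choices, the remaining blocks agree. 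The key identity is $d(F_0^{-1})=0$: differentiating $F_0F_0^{-1}=I_m$ and using that $F_0$ has entries in $\ker d$ gives $F_0\,d(F_0^{-1})=0$, hence $d(F_0^{-1})=0$; consequently $B'=dB=B_0'F_0^{-1}$ and $B'F_0=B_0'$, matching the $(2,1)$ block, and the $(2,3)$ and $(3,3)$ blocks then match after a short computation in which the correction terms cancel in pairs (this is where characteristic $2$ and $d^2=0$ are used).

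The one step that is not pure bookkeeping is checking that the $D$ produced above is invertible, so that the preimage genuinely lands in $B_{m,n}$. Here I would use that the entries of $B_0'$ lie in $\im d\subseteq\ker d$, hence are central and square-zero, so the whole ideal $J$ of $A$ generated by $\im d$ is nil: any element of $J$ involves only finitely many generators $d a$, which are central with $(da)^2=0$, and is therefore nilpotent. Thus $DD_0^{-1}=I_n+N$ with $N$ a matrix whose entries lie in $J$, so $N$ is nilpotent and $DD_0^{-1}$, hence $D$, is invertible. Finally, all the formulas above are natural in $A$ and polynomial in the coordinate ring of $\GL(m+nP)$ (which contains the entries of $F_0^{-1}$ and $D_0^{-1}$), so $g\mapsto (B,(F,C,D,E))$ is a morphism of schemes, inverse to the multiplication map by construction. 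I expect the tedious part to be the block-by-block verification that the remaining entries agree, and the only genuinely conceptual input to be the nilpotence of the ideal generated by $\im d$.
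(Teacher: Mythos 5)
Your proposal is correct and follows essentially the same route as the paper: the paper's proof simply exhibits the same explicit factorization, with second factor having blocks $F$, $C$, $B'F^{-1}C+D$, $BF^{-1}C+E$, matching your inverse formulas (recall $+=-$ in characteristic $2$). You supply some details the paper leaves implicit — notably $d(F^{-1})=0$ and the nilpotence of the ideal generated by $\im d$ guaranteeing invertibility of the new $D$ — both of which are correct and consistent with the invertibility criterion established just before.
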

\begin{proof}
    Because $F$ and $D$ are invertible, each block matrix of the form in \eqref{b_mn_format} can be written uniquely as a product
\begin{equation*}
    \begin{pmatrix}
        I_m & 0 & 0 \\
        B'F^{-1} & I_n & 0 \\
        BF^{-1} & 0 & I_n
    \end{pmatrix}
    \begin{pmatrix}
        F & C & C' \\
        0 & B'F^{-1}C + D & (B'F^{-1}C + D)' \\
        0 & BF^{-1}C + E & B'F^{-1}C + D + (BF^{-1}C + E)'
    \end{pmatrix}.
\end{equation*}
\end{proof}

\begin{proposition}
    Let $M_n$ be the affine group scheme where $M_n(A)$ is the group of $n \times n$ matrices with entries in $A$ with group operation $X * Y = X + Y + X'Y$. Let $H_n$ be the affine group scheme where $H_n(A)$ is the group of $n \times n$ invertible matrices with entries in $A$ under matrix multiplication. The multiplication map $M_n \times H_n \to \GL(nP)$ defined by
    \begin{equation*}
        C, D \mapsto \begin{pmatrix}
        I_n & 0 \\
        C & I_n + C'
    \end{pmatrix}
    \begin{pmatrix}
        D & D' \\
        0 & D
    \end{pmatrix}
    \end{equation*}
    is an isomorphism of schemes.
\end{proposition}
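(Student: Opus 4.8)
The plan is to compare functors of points and invoke Yoneda, as in the proof of Corollary~\ref{glm+np_lu}. By the Proposition above with $m = 0$, a point of $\GL(nP)(A)$ is a block matrix
\[
g = \begin{pmatrix} D & D' \\ E & D + E' \end{pmatrix}, \qquad D, E \in \Mat_{n\times n}(A),
\]
and such a $g$ is invertible if and only if $D$ is invertible. The displayed formula is uniform in $A$, so it defines a natural transformation $\mu\colon M_n \times H_n \to \GL(nP)$ of representable functors and hence a morphism of affine schemes; it will be an isomorphism once I show that each $\mu_A$ is a bijection whose inverse is again natural in $A$.

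First I would multiply the two block matrices out. Using the entrywise Leibniz rule $(CD)' = C'D + CD'$ for matrices over the commutative algebra $A$ of $\mcal{C}$, one gets
\[
\mu_A(C, D) = \begin{pmatrix} D & D' \\ CD & CD' + (I_n + C')D \end{pmatrix} = \begin{pmatrix} D & D' \\ CD & D + (CD)' \end{pmatrix},
\]
which is of the allowed form with top-left block the invertible matrix $D$, so $\mu_A$ indeed lands in $\GL(nP)(A)$. (The same computation shows that $C \mapsto \bigl(\begin{smallmatrix} I_n & 0 \\ C & I_n + C'\end{smallmatrix}\bigr)$ and $D \mapsto \bigl(\begin{smallmatrix} D & D' \\ 0 & D\end{smallmatrix}\bigr)$ are group embeddings $M_n, H_n \hookrightarrow \GL(nP)$, which is useful context, although the statement only asks for an isomorphism of schemes.)

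Then, given $g$ as above, the equation $\mu_A(C_0, D_0) = g$ forces $D_0 = D$ from the top-left block and $C_0 D = E$ from the bottom-left block; since $D$ is invertible, right multiplication by $D$ is a bijection of $\Mat_{n\times n}(A)$, so $C_0 = E D^{-1}$ is the unique solution, and the bottom-right blocks then agree automatically, as $D + (C_0 D)' = D + E'$. Hence $\mu_A$ is a bijection with inverse $g \mapsto (E D^{-1}, D)$, which is visibly natural in $A$ since the block $D$ of an invertible element is invertible. By Yoneda, $\mu$ is an isomorphism of affine schemes.

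There is no serious obstacle here: once set up, the argument is a short computation. The only thing to be careful about is the bookkeeping forced by the fact that $A$ is commutative in $\Ver_4^+$ but not in the ordinary sense, so that $CD \neq DC$ in general and one must consistently invert on the right ($C_0 = E D^{-1}$), together with the single identity $(CD)' = C'D + CD'$ that makes the bottom-right block fall into place.
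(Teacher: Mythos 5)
Your proof is correct and follows essentially the same route as the paper: the paper likewise observes that $D$ is invertible and exhibits the unique factorization with $C = ED^{-1}$, merely stating the product identity that you verify in detail. Your extra care about right-inversion (since $A$ is only commutative in the $\Ver_4^+$ sense) and about naturality in $A$ is sound but adds nothing beyond the paper's argument.
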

\begin{proof}
A matrix $\begin{pmatrix}
        D & D' \\
        E & D + E'
    \end{pmatrix}$ in $\GL(nP)(A)$ has $D$ invertible. Hence, we can write it uniquely as a product
\begin{equation*}
    \begin{pmatrix}
        D & D' \\
        E & D + E'
    \end{pmatrix}
    =
    \begin{pmatrix}
        I_n & 0 \\
        ED^{-1} & I_n + (ED^{-1})'
    \end{pmatrix}
    \begin{pmatrix}
        D & D' \\
        0 & D
    \end{pmatrix}.
\end{equation*}
\end{proof}

\begin{remark}
The Lie algebra of $\GL(m + nP)$ is
\begin{equation*}
\mfrak{gl}(m+nP) = \underline{\End}(m + nP) = (m+nP) \otimes (m+nP)^*,
\end{equation*}
where $\underline{\End}$ is an internal Hom. Hence $\mfrak{gl}(m+nP)$ is an $(m + 2n)^2$-dimensional vector space, and $d$ acts via $1 \otimes d + d \otimes 1$. The bracket operation is given by $[\phi_1, \phi_2] = \phi_1\phi_2 - \phi_2\phi_1 + \phi_2'\phi_1'$.
\end{remark}

\section{Representations of general linear groups in $\Ver_4^+$}

In this section, we describe the irreducible representations of the affine group schemes defined in Section \ref{examples}. For $\GL(P)$ and $\GL(1+P)$ we also explicitly describe the tensor products of the irreducible representations.

\subsection{Representations of $\mathbb G'_a$ and $\mathbb G'_m$}

\begin{proposition}
    $\mathbb G'_a$ has a unique irreducible representation – the trivial representation.
\end{proposition}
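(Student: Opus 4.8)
The plan is to work directly with the comodule description of representations and to prove the stronger statement that every nonzero finite-length representation $M$ of $\mathbb{G}'_a$ contains the trivial representation $\1$ as a subrepresentation. Since $\1$ is the unique simple object of $\Ver_4^+$, this forces an irreducible $M$ to be $\1$, and $\1$ is of course irreducible. Conceptually this is just the assertion that the unipotent group scheme $\mathbb{G}'_a$, whose coordinate ring $\mcal{O}(\mathbb{G}'_a) = \Sym P = \bigoplus_{n \ge 0} \Sym^n P$ is conilpotent, has no nontrivial irreducible representations; what follows is the concrete version.

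First I would set up the grading bookkeeping. Because $\Sym P$ is generated in degree $1$ by the primitive elements $x, x'$, the comultiplication is graded, with components $\Delta_{a,b}\colon \Sym^{a+b}P \to \Sym^a P \otimes \Sym^b P$. Given a coaction $\rho\colon M \to M \otimes \Sym P$, write $\rho = \sum_n \rho_n$ with $\rho_n\colon M \to M \otimes \Sym^n P$. The counit axiom gives $\rho_0 = \id_M$, and coassociativity unwinds, component by component, to the identities $(\rho_a \otimes \id_{\Sym^b P}) \circ \rho_b = (\id_M \otimes \Delta_{a,b}) \circ \rho_{a+b}$ for all $a, b \ge 0$.

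The only finiteness input is next: since $M$ is a finite-length object of the finite category $\Ver_4^+$, it is compact, so $\rho$ into the ind-object $M \otimes \Sym P = \colim_N M \otimes \Sym^{\le N} P$ factors through a finite stage, i.e. $\rho_n = 0$ for $n$ large. If the top nonzero degree is $0$, then $\rho = \id_M \otimes 1$ and $M$ is trivial. Otherwise let $N \ge 1$ be largest with $\rho_N \ne 0$. Setting $b = N$ and $a \ge 1$ in the coassociativity identity makes the right-hand side $(\id \otimes \Delta_{a,N}) \circ \rho_{a+N} = 0$, hence $(\rho_a \otimes \id_{\Sym^N P}) \circ \rho_N = 0$ for every $a \ge 1$. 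Passing $\rho_N$ across the duality isomorphism $\Hom(M, M \otimes \Sym^N P) \cong \Hom(M \otimes (\Sym^N P)^*, M)$ to a morphism $\psi$, this says exactly that $\rho_a \circ \psi = 0$ for all $a \ge 1$, i.e. $\im \psi \subseteq \bigcap_{a \ge 1} \ker \rho_a$. But $\bigcap_{a \ge 1} \ker \rho_a$ is precisely the equalizer defining the invariants $M^{\mathbb{G}'_a}$ (the subobject on which $\rho$ and $m \mapsto m \otimes 1$ agree). Dualizing back, $\rho_N$ factors through $M^{\mathbb{G}'_a} \otimes \Sym^N P \hookrightarrow M \otimes \Sym^N P$; since $\Sym^N P \ne 0$ and $\rho_N \ne 0$, we get $M^{\mathbb{G}'_a} \ne 0$, and any nonzero object of $\Ver_4^+$ carrying the trivial $\mathbb{G}'_a$-action contains $\1$, so $\1 \hookrightarrow M$ as a subrepresentation.

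I expect the one genuinely delicate point to be that penultimate step: formulating ``the top-degree component of the coaction takes values in the invariants'' cleanly inside a tensor category — manipulating $\rho_N$ and its adjoint $\psi$ via duals rather than writing $\rho_N(m) = \sum_j m_j \otimes u_j$ and arguing with coefficients — together with the identification of $\bigcap_{a \ge 1} \ker \rho_a$ with the equalizer $M^{\mathbb{G}'_a}$. Everything else (the grading of $\Delta$, the unwinding of coassociativity, and the compactness of $M$) is routine, and the same degree-truncation argument will reappear for the other unipotent group schemes in the paper.
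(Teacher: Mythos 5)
Your argument is correct, but it is a genuinely different proof from the one in the paper. The paper restricts to the subgroup $\mathbb G_a \subset \mathbb G'_a$ and uses the standard fact that every irreducible of $\mathbb G'_a$ embeds into $\ind_{\mathbb G_a}^{\mathbb G'_a}\1 = \mcal{O}(\mathbb G'_a)^{\mathbb G_a}$; it then computes this induced module explicitly to be $k[T']$, a two-dimensional self-extension of $\1$, so the only irreducible subquotient is trivial. You instead run the intrinsic ``connected graded coalgebra $\Rightarrow$ unipotent'' argument: truncate the coaction by finite length, and use the graded component of coassociativity in top degree $N$ to show $(\rho_a\otimes\id)\circ\rho_N=0$ for $a\ge 1$, hence $\im\rho_N\subseteq M^{\mathbb G'_a}\otimes\Sym^N P$ (your dualization step can even be skipped, since $\ker(\rho_a\otimes\id_V)=\ker(\rho_a)\otimes V$ by exactness of $-\otimes V$ for rigid $V$), so the invariants are nonzero and contain a copy of the unique simple $\1$. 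The trade-offs: your proof is more self-contained --- it needs neither the classification of $\mathbb G_a$-irreducibles nor the embedding-into-induced-modules principle --- and it applies verbatim to any affine group scheme in $\Ver_4^+$ whose coordinate ring is a connected graded Hopf algebra, in particular to the abelian group scheme attached to any object $X$ (note it does \emph{not} immediately cover $M_1$, whose coproduct $\Delta(X)=X\otimes 1+1\otimes X+X'\otimes X$ is only filtered, not graded). The paper's computation, on the other hand, yields the extra information that $\ind_{\mathbb G_a}^{\mathbb G'_a}\1$ is a nonsplit self-extension of $\1$, i.e.\ a nontrivial class in $\Ext^1(\1,\1)$, which your argument does not see.
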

\begin{proof}
Recall that the coordinate ring of $\mathbb G'_a$ is $\mathcal{O}(\mathbb G'_a) = k[T, T']$. $\mathbb G_a$ is a subgroup of $\mathbb G'_a$, so every irreducible representation of $\mathbb G'_a$ lies inside $\ind_{\mathbb G_a}^{\mathbb G'_a} \1 = \mathcal{O}(\mathbb G'_a)^{\mathbb G_a}$, where $\mathbb G_a$ acts via the right regular representation and $\mathbb G'_a$ acts via the left regular representation. An element $a \in \mathbb G_a(A)$ acts on $\mathcal{O}(\mathbb G'_a) \otimes A$ by $T \otimes 1 \mapsto T \otimes 1 + 1 \otimes a$. So $\mathcal{O}(\mathbb G'_a)^{\mathbb G_a} = k[T']$, which is 2-dimensional because $(T')^2 = 0$. The $\mathbb G'_a$-action of $a \in A$ is $T' \otimes 1 \mapsto T' \otimes 1 + 1 \otimes a'$. Hence $k[T']$ is an extension of the trivial representation (generated by $1$) by itself.
\end{proof}

\begin{proposition}
    The irreducible representations $L_n$ of $\mathbb G'_m$ are labeled by integer $n$. Moreover, $L_n$ has the following structure:
    \begin{itemize}
    \item if $n \equiv 0 \pmod{4}$, $L_n$ is one-dimensional defined by $a \mapsto a^n$;
    \item if $n \equiv 1 \pmod{4}$, $L_n$ is a representation on $P$ defined by $a \mapsto \begin{pmatrix} a^n & a^{n - 1}a' \\ 0 & a^n \end{pmatrix}$ with respect to the basis $x, x'$;
    \item if $n \equiv 2 \pmod{4}$, $L_n$ is one-dimensional defined by $a \mapsto a^n + a^{n - 1}a'$;
    \item if $n \equiv 3 \pmod{4}$, $L_n$ is a representation on $P$ defined by $a \mapsto \begin{pmatrix} a^n + a^{n -1}a' & a^{n - 1}a' \\ 0 & a^n + a^{n-1}a' \end{pmatrix}$ with respect to the basis $x, x'$;
\end{itemize}
for $a \in \mathbb G'_m(A) = A^\times$.
\end{proposition}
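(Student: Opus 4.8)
The plan is to classify the finite-length $\mathcal O(\mathbb G'_m)$-comodules directly, using the subgroup $\mathbb G_m\hookrightarrow\mathbb G'_m$ to put the coaction into a normal form. Recall $\mathcal O(\mathbb G'_m)=k[T,T^{-1},T']$ with $T$ grouplike and $T'=dT$, so $\Delta(T)=T\otimes T$, $\Delta(T')=T'\otimes T+T\otimes T'$, while commutativity in $\Ver_4^+$ forces $(T')^2=0$ and $T'T=TT'$; thus $\mathcal O(\mathbb G'_m)=k[T^{\pm1}]\oplus k[T^{\pm1}]\,T'$ as a vector space, and $\mathcal O(\mathbb G'_m)\twoheadrightarrow\mathcal O(\mathbb G_m)=k[T^{\pm1}]$, $T'\mapsto0$, realizes $\mathbb G_m$ as a subgroup. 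Restricting a representation $L$ to $\mathbb G_m$ therefore equips it with a $\mathbb Z$-grading $L=\bigoplus_n L[n]$ by subobjects of $\Ver_4^+$, with only finitely many summands nonzero since $L$ has finite length.

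The first genuine computation is the coproduct of powers of $T$. Because the multiplication on $\mathcal O(\mathbb G'_m)^{\otimes2}$ is \emph{braided}, $T$ being grouplike does \emph{not} make $T^n$ grouplike; instead an induction starting from $(T\otimes T)^2=T^2\otimes T^2+TT'\otimes TT'$ gives $\Delta(T^n)=T^n\otimes T^n+\binom n2\,T^{n-1}T'\otimes T^{n-1}T'$ for all $n\in\mathbb Z$, together with $\Delta(T^nT')=T^nT'\otimes T^{n+1}+T^{n+1}\otimes T^nT'$. For $l\in L[n]$, restriction to $\mathbb G_m$ forces $\rho(l)=l\otimes T^n+w$ with $w\in L\otimes k[T^{\pm1}]T'$; substituting into the coassociativity axiom and matching coefficients in $\mathcal O(\mathbb G'_m)^{\otimes2}$ kills all but one term of $w$ and shows it equals $\phi(l)\otimes T^{n-1}T'$ for a linear endomorphism $\phi$ of $L[n]$. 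In particular each $L[n]$ is a subcomodule, so an irreducible representation is concentrated in a single weight $n$; assume $L=L[n]$ henceforth.

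Two relations on $\phi$ remain. Comparing the coefficients of $T^{n-1}T'\otimes T^{n-1}T'$ in coassociativity gives $\phi^2=\binom n2\cdot\mathrm{id}_L$, and the requirement that $\rho$ be a morphism in $\Ver_4^+$ (using $d(T^n)=nT^{n-1}T'$) gives $d\phi+\phi d=n\cdot\mathrm{id}_L$ in characteristic $2$. Conversely every pair $(L,\phi)$ obeying these two identities is a representation of $\mathbb G'_m$ of weight $n$, whose subrepresentations are exactly the subobjects stable under $d$ and $\phi$. Since $\binom n2\bmod2$ equals $0$ for $n\equiv0,1\pmod4$ and $1$ for $n\equiv2,3\pmod4$, the analysis splits into four cases.

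In the cases $n\equiv0$ and $n\equiv2\pmod4$, $\phi$ commutes with $d$, so $\phi$ (resp.\ $\nu:=\phi+\mathrm{id}$) is a square-zero element of $\End_{\Ver_4^+}(L)$; if it were nonzero its kernel would be a proper nonzero stable subobject, so irreducibility forces $L=\1$ with $\phi=0$ (resp.\ $\phi=\mathrm{id}$), yielding the characters $a\mapsto a^n$ and $a\mapsto a^n+a^{n-1}a'$. In the case $n\equiv1\pmod4$ we have $d^2=\phi^2=0$ and $d\phi+\phi d=\mathrm{id}$, so $d\phi$ and $\phi d$ are orthogonal idempotents summing to the identity and $d,\phi$ generate a subalgebra of $\End_k(L)$ isomorphic to $\Mat_2(k)$; hence $L$ is a sum of copies of the standard module and is irreducible exactly when $L\cong k^2$, which as an object of $\Ver_4^+$ with $d$ the off-diagonal nilpotent is $P$, and in the basis $x,x'$ of $P$ with $dx=x'$ one gets $\phi x=0$, $\phi x'=x$, hence $\rho(x)=x\otimes T^n$, $\rho(x')=x'\otimes T^n+x\otimes T^{n-1}T'$, i.e.\ the stated matrix. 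The case $n\equiv3\pmod4$ reduces to the previous one by replacing $\phi$ with $\phi+\mathrm{id}$, which satisfies $(\phi+\mathrm{id})^2=0$ and $d(\phi+\mathrm{id})+(\phi+\mathrm{id})d=\mathrm{id}$; again the unique irreducible lives on $P$ and unwinding gives the stated matrix. Distinct $L_n$ carry distinct weights, hence are non-isomorphic, and since every irreducible sits in exactly one weight this is the complete list. The one delicate point is the braided coproduct formula for $T^n$ in the second paragraph — the coefficient $\binom n2$ it produces is precisely what creates the mod-$4$ dichotomy — so securing that formula and the ensuing coefficient bookkeeping is the crux, after which the four-case split is routine.
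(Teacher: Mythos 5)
Your proof is correct, and it takes a somewhat different route from the paper's. The paper argues via Frobenius reciprocity: every irreducible of $\mathbb G'_m$ embeds in $\ind_{\mathbb G_m}^{\mathbb G'_m}V_n=(V_n\otimes\mathcal O(\mathbb G'_m))^{\mathbb G_m}$, which is the two-dimensional span of $v\otimes X^n$ and $v\otimes X^{n-1}X'$, and then reads off the composition factors of this small induced module ("we can check"). You instead restrict down rather than inducing up: you use the $\mathbb Z$-grading from $\mathbb G_m$ to put an arbitrary coaction into the normal form $\rho(l)=l\otimes T^n+\phi(l)\otimes T^{n-1}T'$ and reduce the classification to linear algebra for the pair $(d,\phi)$ subject to $\phi^2=\binom n2$ and $d\phi+\phi d=n$. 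Both arguments hinge on the same two facts — the subgroup $\mathbb G_m\subset\mathbb G'_m$ and the braided coproduct $\Delta(T^n)=T^n\otimes T^n+\binom n2\,T^{n-1}T'\otimes T^{n-1}T'$ — and your coefficient $\binom n2$ is exactly what the paper's induced-module computation produces implicitly; I verified it against the multiplication rule $(a\otimes b)(c\otimes e)=ac\otimes be+a(dc)\otimes(db)e$ in $\mathcal O^{\otimes2}$, and it is right (including for negative $n$). What your version buys is that the "we can check" steps become transparent: irreducibility in each congruence class follows from the structure of the algebra generated by $d$ and $\phi$ (square-zero commuting operator in the even-$\binom n2$-vanishing cases, a copy of $\Mat_2(k)$ in the odd-weight cases), and completeness of the list is automatic since every irreducible is concentrated in a single weight. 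What the paper's version buys is brevity and consistency with the induction-from-a-subgroup technique it reuses for $M_1$, $\GL(P)$, and $\GL(nP)$. One cosmetic remark: your $\phi$ in weight $n\equiv1\pmod 4$ is only pinned down up to conjugation by $\Aut_{\Ver_4^+}(P)$ (the relations admit a one-parameter family $\phi x=cx+c^2x'$, $\phi x'=x+cx'$), but all choices give isomorphic comodules, so the stated matrix is the right normal form.
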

\begin{remark}
Note that when $n \equiv 1, 3 \pmod{4}$, even though the matrices are upper triangular, $L_n$ is irreducible because it must be closed under the $d$-action. Even though the $\mathbb G'_m$-action fixes $x$, the $d$-action requires that $x'$ also lie in the representation.
\end{remark}
\begin{proof}
The coordinate ring of $\mathbb G'_m$ is $\mathcal{O}(\mathbb G'_m) = k[X, X', X^{-1}]$ with $\Delta(X) = X \otimes X$. For $a \in \mathbb G'_m(A)$, the $\mathbb G'_m$-action on $\mathcal{O}(\mathbb G'_m)$ sends $X$ to $X \otimes a$.

    We have $\mathbb G_m \subset \mathbb G'_m$, so the representations of $\mathbb G'_m$ are found in $\ind_{\mathbb G_m}^{\mathbb G'_m} V = (V \otimes \mathcal{O}(\mathbb G'_m))^{\mathbb G_m}$ where $V$ is a $\mathbb G_m$-representation, $\mathbb G_m$ acts on the left on $V$ and on the right on $\mathcal{O}(\mathbb G'_m)$, and $\mathbb G'_m$ acts on the left on $\mathcal{O}(\mathbb G'_m)$. Let $V_n$ be the $\mathbb{G}_m$-representation of degree $n$. The $\mathbb G_m$-invariants of $V_n \otimes \mathcal{O}(\mathbb G'_m)$ are the degree $n$ polynomials in $X, X^{-1}, X'$. 

    So $(V \otimes \mathcal{O}(\mathbb G'_m))^{\mathbb G_m}$ is spanned by $v \otimes X^n, v \otimes X^{n-1}X'$. If $n$ is even, this is a copy of $\1^2$. We can check that this is an extension of $\1$ by itself and the $\mathbb G'_m$-action on $\1$ is given as above. If $n$ is odd, this is a copy of $P$, and the $\mathbb G'_m$-action is given as above.

\end{proof}

\begin{proposition}
    $M_1$ has two irreducible representations -- the trivial representation and the one defined by $a \mapsto 1 + a'$, which we will denote by $L_\xi$. Both are one-dimensional.
\end{proposition}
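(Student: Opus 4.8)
The plan is to imitate the proof of the preceding proposition (on $\mathbb{G}_m'$): find a subgroup of $M_1$ whose only irreducible representation is trivial, and then compute the representation induced from it, showing it has exactly two composition factors.

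First I would observe that the elements of $M_1(A)$ that lie in $\ker d$ form a subgroup: if $a' = b' = 0$ then $a*b = a+b+a'b = a+b$, so this subgroup is $\mathbb{G}_a$. On coordinate rings it corresponds to the Hopf quotient $\mcal{O}(M_1) = k[X,X'] \twoheadrightarrow k[X] = \mcal{O}(\mathbb{G}_a)$ killing $X'$; one checks $(X')$ is a Hopf ideal using $\Delta(X') = X'\otimes 1 + 1\otimes X' + X'\otimes X'$ and $S(X') = X'$. Since $\mathbb{G}_a$ has only the trivial irreducible representation, Frobenius reciprocity gives that every irreducible representation of $M_1$ embeds into $\ind_{\mathbb{G}_a}^{M_1}\1 = \mcal{O}(M_1)^{\mathbb{G}_a}$, where $\mathbb{G}_a$ acts via the right regular coaction and $M_1$ via the left regular coaction. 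So it suffices to compute this object as an $M_1$-representation.

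Next I would compute the invariants. The right $\mathbb{G}_a$-coaction is obtained from $\Delta$ by pushing the second tensor factor along $k[X,X'] \to k[X]$, so a point $c \in \ker d$ acts by $X \mapsto X + c(1+X')$ and $X' \mapsto X'$. Writing a general element as $p(X) + X'q(X)$ and expanding to first order in $cX'$ (legitimate since $(cX')^2 = 0$, and using $(1+X')^2 = 1$ in characteristic $2$), the image is $p(X+c) + X'\big(cp'(X+c) + q(X+c)\big)$; demanding this equal $p(X)+X'q(X)$ for $c$ ranging over $\ker d$ in every test algebra forces $p$ and $q$ to be scalars. Hence $\mcal{O}(M_1)^{\mathbb{G}_a} = k\cdot 1 \oplus k\cdot X' = k[X']$, a two-dimensional object on which $d$ acts by $0$. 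Under the left regular coaction, $1 \mapsto 1\otimes 1$ and $X' \mapsto X'\otimes 1 + 1\otimes X' + X'\otimes X'$ (using $S(X') = X'$), so $k\cdot 1$ is a trivial subrepresentation, and the quotient by it is one-dimensional with coaction by the grouplike $1+X'$ — that is, the representation $a \mapsto 1+a'$, which is $L_\xi$; moreover the extension is non-split. Therefore the composition factors of $\ind_{\mathbb{G}_a}^{M_1}\1$, and hence all irreducible representations of $M_1$, are exactly $\1$ and $L_\xi$; they are non-isomorphic since the coaction $v \mapsto v\otimes(1+X')$ of $L_\xi$ is not the trivial one.

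The main obstacle is the explicit computation of $\mcal{O}(M_1)^{\mathbb{G}_a}$: one has to extract the correct right $\mathbb{G}_a$-coaction from the somewhat unusual comultiplication of $\mcal{O}(M_1)$, and then run the invariance argument while keeping careful track of the characteristic-$2$ identities and of the fact that the parameter $c$ must be allowed to range over $\ker d$ in an arbitrary commutative algebra $A$, not just over $k$. Once that is done, the reduction to the induced representation and the identification of its two constituents are routine.
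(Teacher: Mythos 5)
Your proof is correct and follows essentially the same route as the paper: restrict to the subgroup $\mathbb{G}_a = \ker d$ inside $M_1$, use Frobenius reciprocity to embed every irreducible into $\ind_{\mathbb{G}_a}^{M_1}\1 = \mcal{O}(M_1)^{\mathbb{G}_a}$, compute the invariants to be $k[X']$, and read off the two composition factors $\1$ and $L_\xi$ from the grouplike $1+X'$. The only (immaterial) difference is which regular coaction you assign to $\mathbb{G}_a$ --- your translation $X \mapsto X + c(1+X')$ versus the paper's $X \mapsto X + a$ --- and your computation of the invariants is carried out in more detail; both give $k[X']$.
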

\begin{proof}
    $\mathcal{O}(M_1) = k[X, X']$ with coproduct $\Delta(X) = 1 \otimes X + X \otimes 1 + X' \otimes X$. Then $1 + X'$ is grouplike and $a \mapsto 1 + a'$ is a 1-dimensional representation of $M_1$.

    To see that this is the only nontrivial irreducible representation, note that $\mathbb G_a \subset M_1$, and thus all the irreducible representations of $M_1$ are contained in $\ind_{G_a}^{M_1} \1 = (\1 \otimes \mcal{O}(M_1))^{\mathbb G_a}$. The $\mathbb G_a$-action on $\mcal{O}(M_1)$ takes $X$ to $X \otimes 1 + 1 \otimes a$ and $X'$ to $X' \otimes 1$, so
    \begin{equation*}
        (\1 \otimes \mcal{O}(M_1))^{\mathbb G_a} = \1 \otimes k[X']
    \end{equation*}
    and from the description of $\mcal{O}(M_1)$ above, we conclude that the trivial representation and the representation taking $a \mapsto 1 + a'$ are the only irreducible representations of $M_1$.
\end{proof}

\subsection{Representations of $\GL(m + nP)$}

In this section, we describe the finite-dimensional irreducible modules for $\GL(m+ nP)$.

\begin{proposition}\label{glm+np_irrep}
    The irreducible representations of $\GL(m+nP)$ are labeled by pairs of an irreducible representation of $\GL(m)$ and an irreducible representation of $\GL(nP)$.
\end{proposition}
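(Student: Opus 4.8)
The plan is to reduce the classification for $\GL(m+nP)$ to the two "pure" cases $\GL(m)$ and $\GL(nP)$ by exhibiting $\GL(m)\times\GL(nP)$ as a Levi subgroup of a parabolic, and then running a highest-weight/induction argument analogous to the one in the proof of the highest weight theorem (the PBW-factorization argument in Section~2). First I would observe that $\GL(m)\times\GL(nP)$ sits inside $\GL(m+nP)$ as the subgroup of block-diagonal matrices (in the $m\,\1\oplus nP$ decomposition), i.e. matrices of the form in the Proposition~(structure of $\GL(m+nP)$) with $C=B=0$. This embedding is compatible with the fundamental group $\pi$, since $k\pi=k[d]/d^2$ acts diagonally on $m\,\1\oplus nP$ anyway. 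The key geometric input is a factorization $N_-\times (\GL(m)\times\GL(nP))\times N_+ \to \GL(m+nP)$, where $N_+$ is the unipotent radical of the parabolic $P_{m,n}$ stabilizing the flag $0\subset m\,\1\subset m\,\1\oplus nP$ — concretely $N_+$ consists of matrices $\begin{pmatrix} I & C & C' \\ 0 & I & 0 \\ 0 & 0 & I\end{pmatrix}$ — and $N_-$ the opposite unipotent, with entries governed by $(\mathbb G'_a)^{mn}$ as in Corollary~\ref{glm+np_lu}. This product map is an open immersion near the identity, giving a PBW decomposition $\Dist(\GL(m+nP))\cong \Dist(N_-)\otimes\Dist(\GL(m)\times\GL(nP))\otimes\Dist(N_+)$ as objects.

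The grading element making this work is a cocharacter $f:\mathbb G_m\to\GL(m+nP)$ acting by $t$ on the $m\,\1$ block and by $t^{-1}$ (or just trivially versus $t$, any scalar separation) on the $nP$ block; its centralizer is exactly $\GL(m)\times\GL(nP)$ (the off-diagonal blocks $m\,\1\otimes(nP)^*$ and $nP\otimes(m\,\1)^*$ carry nonzero $\mathbb G_m$-weight, hence cannot commute with $\im f$). Then $\GL(m)\times\GL(nP) = Z(f)$ in the notation of the highest weight theorem, and the unipotent radicals $N_+,N_-$ are the positive/negative weight parts. Applying the highest weight theorem directly: $\hw_f$ gives an injection $\Irrep(\GL(m+nP),\phi)\hookrightarrow\Irrep(\GL(m)\times\GL(nP),\phi)$. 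For surjectivity I would use the Verma-module construction from that same proof: given an irreducible $M$ of $Z(f)=\GL(m)\times\GL(nP)$, the module $\Dist(\GL(m+nP))\otimes_{\Dist(B_+)}M$ has a unique irreducible quotient with highest weight $M$, so every irreducible of the Levi arises. Finally, I would invoke that irreducible representations of a product $G_1\times G_2$ of affine group schemes (compatible with $\pi$, which here acts only through each factor — through $\GL(m)$ it acts trivially and through $\GL(nP)$ via the canonical action on $nP$) are exactly external tensor products $L_1\boxtimes L_2$ of irreducibles; this is the standard argument that a simple comodule over $\mathcal O(G_1)\otimes\mathcal O(G_2)$ decomposes, which goes through verbatim in a tensor category since $\mathcal O(G_1\times G_2)=\mathcal O(G_1)\otimes\mathcal O(G_2)$.

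The main obstacle I anticipate is verifying the open-immersion/PBW factorization honestly in $\Ver_4^+$: unlike the classical case, the "unipotent" blocks involve both $C$ and $C'=dC$, the group law on $(\mathbb G'_a)^{mn}$ is the genuine additive one on all of $A$ (not $\ker d$), and one must check the multiplication map $N_-\times(\GL(m)\times\GL(nP))\times N_+\to\GL(m+nP)$ is an iso onto a formal neighborhood of $1$ — this is essentially the content of Corollary~\ref{glm+np_lu} combined with the analogous lower-triangular factorization, so I would assemble it from there rather than redo it, but the bookkeeping with $d$-twisted entries is where care is needed. A secondary point to be careful about is that $\GL(nP)$ is itself not reductive-looking (it has the unipotent-type factor $M_n$, cf. the $M_n\times H_n$ decomposition), so one should not expect further simplification at this stage — the Proposition deliberately stops at $\GL(m)$ and $\GL(nP)$, and the finer analysis of $\GL(nP)$ via $H_n$ and tuples of $\GL(P)$-representations is deferred to the subsequent results.
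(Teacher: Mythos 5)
Your reduction to the Levi and the injectivity half are sound and essentially match the paper: the paper likewise uses the Gauss decomposition of Corollary \ref{glm+np_lu} (in the two-factor form $(\mathbb G'_a)^{mn}\times B_{m,n}\to G$) together with the highest weight machinery, and then observes that irreducible $B_{m,n}$-modules are exactly pairs of irreducibles of $\GL(m)$ and $\GL(nP)$. The gap is in your surjectivity argument. The unique irreducible quotient of the Verma module $\Dist(G)\otimes_{\Dist(B_+)}M$ is a priori only an ind-object --- the Verma module itself is $\Dist(N_-)\otimes M$ as an object, which is of infinite length since $\Dist(\mathbb G'_a)$ is infinite-dimensional --- and nothing in your argument shows that this quotient has finite length, nor that it integrates to an $\mcal{O}(G)$-comodule. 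Indeed, the identical argument applied to the Levi $\GL(m)\times\GL(n)\subset\GL(m+n)$ in the classical setting would ``prove'' that every pair of dominant weights $(\lambda,\mu)$ of the two factors is the highest weight of a finite-dimensional irreducible of $\GL(m+n)$, which is false: only pairs satisfying the interlacing condition $\lambda_m\ge\mu_1$ occur. This is precisely why the highest weight theorem in Section 2 asserts only injectivity of $\hw_f$ and not surjectivity.

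What makes surjectivity true here, and what a correct proof must use, is that the factorization of Corollary \ref{glm+np_lu} is a \emph{global} isomorphism of schemes, not merely an open immersion onto a big cell: the lower-triangular block $nP\otimes(m\1)^*$ consists of nilpotent directions that never obstruct invertibility. Hence $\mcal{O}(G)\cong\mcal{O}(M)\otimes\mcal{O}(B)$ as right $B$-modules with $M=(\mathbb G'_a)^{mn}$, so $\Hom_B(V,\mcal{O}(G))\cong V^*\otimes\mcal{O}(M)\neq 0$ for \emph{every} irreducible $B$-module $V$; by Frobenius reciprocity and local finiteness of comodules, the induced module then contains a finite-length simple $G$-module with highest weight $V$. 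This is the route the paper takes. Your proposal cites the corollary only as an ``open immersion near the identity,'' which suffices for the $\Dist$-level PBW factorization (hence for injectivity) but not for the global coinduction computation on which surjectivity rests.
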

\begin{proof}

By Corollary \ref{glm+np_lu}, $G = \GL(m+nP)$ has a Gauss decomposition $(\mathbb G'_a)^{mn} \times B_{m,n}$. For ease of notation, let $B := B_{m, n}$ and $M := (\mathbb G'_a)^{mn}$. The irreducible $B$-modules are then in bijection with pairs of an irreducible $\GL(m)$-module and an irreducible $\GL(nP)$-module.

Via the discussion in Section \ref{highest_weights}, every irreducible $G$-module has a highest weight, which is the corresponding irreducible $B$-module, and this map from $G$-irreps to $B$-irreps is injective. For any irreducible $B$-module $V$, we have $\Hom_B(V, \mathcal{O}(G)) \ne 0$ iff there exists a $G$-module with highest weight $V$. The Gauss decomposition implies that the multiplication map is an isomorphism $\mathcal{O}(M) \otimes \mathcal{O}(B) \cong \mcal{O}(G)$. Therefore, 
\begin{equation*}
    \Hom_B(V, \mathcal{O}(G)) \cong V^* \otimes \mcal{O}(M)
\end{equation*}
and every irreducible $B$-module is the highest weight for some irreducible $G$-module. We thus get a bijection between irreducible $G$-modules and irreducible $B$-modules.
\end{proof}

Thus, to describe irreducible $\GL(m + nP)$-modules, it suffices to describe irreducible $\GL(nP)$ modules. We first find all finite-dimensional irreducible $\GL(P)$-modules (Section \ref{glp_reps}), and use these to describe the irreducible $\GL(nP)$-modules in Section \ref{glnp_reps}. From that discussion, we will obtain the following theorem.

\begin{theorem}
    The irreducible representations of $\GL(m + nP)$ correspond to pairs of representations of $\GL(m)$ and $\GL(nP)$; in other words, integers $\lambda_1 \ge \cdots \ge \lambda_m$ and $L_{\mu_1}, \dots, L_{\mu_n}$ where $L_{\mu_i}$ is an irreducible representation of $\GL(P)$ of degree $\mu_i$ and $\mu_1 \ge \cdots \ge \mu_n$.
\end{theorem}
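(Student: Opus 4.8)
The plan is to deduce the theorem by combining the two classification results established above, so the work is essentially one of assembly and bookkeeping. First I would invoke Proposition~\ref{glm+np_irrep}: the Gauss decomposition $\GL(m+nP)\cong(\mathbb{G}'_a)^{mn}\times B_{m,n}$ from Corollary~\ref{glm+np_lu} gives every irreducible representation of $G=\GL(m+nP)$ (compatible with $\pi_1(\mcal{C})$) a highest weight, namely the irreducible $B_{m,n}$-module it restricts to, and $L\mapsto\hw(L)$ is a bijection onto irreducible $B_{m,n}$-modules. Since the unipotent radical of the parabolic $B_{m,n}$ acts trivially on any irreducible, an irreducible $B_{m,n}$-module is the same as an irreducible module for the Levi quotient $\GL(m)\times\GL(nP)$, i.e.\ a pair $(U,W)$ with $U\in\Irrep(\GL(m))$ and $W\in\Irrep(\GL(nP))$.

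Next I would identify the two factors explicitly. For $\GL(m)=\GL(m\1)$ the fundamental group $\pi_1(\mcal{C})$ acts trivially on $m\1$, so a $\pi_1$-compatible representation is simply an ordinary rational representation of $\GL_m$ over $k$; its irreducibles are classified, uniformly in the characteristic, by dominant weights, that is, tuples $\lambda_1\ge\cdots\ge\lambda_m$ with $\lambda_i\in\mathbb{Z}$ (\cite{jantzen_algebraic_groups_2003}). For $\GL(nP)$ I would quote the corollary immediately preceding the theorem: its irreducibles are labeled by $n$-tuples $(L_1,\dots,L_n)$ of irreducible $\GL(P)$-modules with $\deg L_1\ge\cdots\ge\deg L_n$, where $\deg$ is the weight of the diagonal $\mathbb{G}'_m$-action, and the irreducible $\GL(P)$-modules themselves were classified in Subsection~\ref{glp_reps} (each $L_i$ being pinned down by its degree $\mu_i$ together with at most one further bit of discrete data). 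Setting $\mu_i=\deg L_i$ and tensoring the pair $(U,W)$ back up along $B_{m,n}$, the image of the highest-weight map is exactly the set described in the statement: a dominant integer tuple $\lambda_1\ge\cdots\ge\lambda_m$ together with irreducible $\GL(P)$-modules $L_{\mu_1},\dots,L_{\mu_n}$ with $\mu_1\ge\cdots\ge\mu_n$.

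The genuinely nontrivial content lives in the inputs, not in this assembly: the finite length (rather than merely ind-object) nature of the $\GL(nP)$-module attached to a dominant $n$-tuple, obtained above from $\Dist(\GL(nP))$ being finitely generated over $\Dist(G_*)$ with $G_*\cong\GL(n)\ltimes\mfrak{gl}(n)$, and the explicit list of $\GL(P)$-irreducibles. The only point I expect to require care in writing up the present theorem is the translation between the abstract ``highest weight with respect to a torus $f$'' of the highest-weight theorem and the concrete parabolic picture, together with checking that the $\mathbb{G}_m$ cutting out the Levi $\GL(m)\times\GL(nP)$ and the $\mathbb{G}'_m$-tori inside the $\GL(P)$-factors are mutually compatible, so that the chain $\lambda_1\ge\cdots\ge\lambda_m$ and the chain $\mu_1\ge\cdots\ge\mu_n$ together capture all of the dominance conditions and nothing is double-counted. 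I expect this compatibility bookkeeping, though routine, to be the main (and essentially only) obstacle.
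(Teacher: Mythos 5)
Your proposal is correct and follows the paper's own route: the theorem is obtained exactly by combining Proposition~\ref{glm+np_irrep} (pairs of irreducibles for $\GL(m)$ and $\GL(nP)$ via the Gauss decomposition and highest weights for $B_{m,n}$) with the corollary classifying irreducible $\GL(nP)$-modules by $n$-tuples of $\GL(P)$-irreducibles with nonincreasing degrees, together with the classical highest-weight classification for $\GL(m)$. Your added remarks on the Levi quotient and on each $L_{\mu_i}$ carrying one extra bit of discrete data beyond its degree are consistent with the paper and do not change the argument.
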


\begin{question}
    What are the blocks of $\Rep(\GL(m + nP))$?
\end{question}

\subsubsection{Representations of $\GL(P)$}\label{glp_reps}

We first find all finite-dimensional irreducible $\GL(P)$-modules. 

Recall from Section \ref{examples} that as a functor, $\GL(P)$ takes a commutative algebra $A$ to the group of matrices of the form
\begin{equation*}
    \begin{pmatrix}
        a & a' \\ b & a + b'
    \end{pmatrix},
    a \in A^\times, b \in A.
\end{equation*}

We will denote such an element of $\GL(P)(A)$ by $(a, b)$. 

\begin{proposition}\label{glp_symnp_matrix}
    The $\GL(P)$-action on $\Sym^n (P)$ (induced by the action of $\GL(P)$ on $P$) is given by
    \begin{equation*}
    (a, b) \mapsto \begin{cases}
       \begin{pmatrix}
         a^n & a^{n - 1} a'\\
         a^{n - 1} b + \frac{n - 1}{2} a^{n - 2} a' (b' + a) & a^n + a^{n - 1}
         b'
       \end{pmatrix} & n \text{ odd} \\
       \begin{pmatrix}
         a^n & a^{n - 1} a'\\
         \frac{n}{2} a^{n - 2} a' (b' + a) & a^n + a^{n - 1} b' + a^{n - 2}
         ba'
       \end{pmatrix} & n \text{ even} 
     \end{cases}.
     \end{equation*}
\end{proposition}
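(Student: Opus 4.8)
The plan is to compute the action directly on a chosen basis of $\Sym^n P$ and then verify the claimed matrix formula is consistent with the multiplicative structure. Recall $P$ has basis $x, x'$ with $dx = x'$, $dx' = 0$, and $\Sym P = k[x,x']$ with $(x')^2 = 0$, so $\Sym^n P$ has basis $\{x^n, x^{n-1}x'\}$. The element $(a,b) \in \GL(P)(A)$ acts on $P \otimes A$ by $x \mapsto ax + bx'$ (equivalently $x \otimes 1 \mapsto x \otimes a + x' \otimes b$), and then $x' = dx \mapsto d(ax+bx') = a'x + (a + b')x'$, matching the matrix $\begin{pmatrix} a & a' \\ b & a+b' \end{pmatrix}$. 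Since $\GL(P)$ acts on $\Sym P$ by algebra automorphisms (commuting with $d$), the action on $\Sym^n P$ is determined by $x^n \mapsto (ax + bx')^n$ and $x^{n-1}x' \mapsto d(x^n \mapsto \cdots)$, i.e. the second column is obtained by applying $d$ to the image of $x^n$ — which already forces the $(1,2)$ and $(2,2)$ entries to be the $d$-images of the $(1,1)$ and $(2,1)$ entries. So the real content is computing $(ax + bx')^n$ in the commutative algebra $\Sym P \otimes A$.

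First I would expand $(ax+bx')^n$. The subtlety is that $A$ is commutative in $\Ver_4^+$, not in the ordinary sense, so one must track the correction terms coming from $ab = ba + a'b'$ and from $(xx')$-type products; but since $(x')^2 = 0$, only terms linear in $x'$ survive, giving $(ax+bx')^n = (\text{coeff})\, x^n + (\text{coeff})\, x^{n-1}x'$ with the $x^n$-coefficient a product of $n$ copies of $a$ and the $x^{n-1}x'$-coefficient a sum of $n$ terms, each a product with one $b$ replacing one $a$, suitably reordered past the $x$'s. The key computational lemma I expect to need is a normal-form identity in the algebra $\Sym P \otimes A$: something like $(ax)^n = a^n x^n + \binom{n}{2} a^{n-2} a' \, x^{n-1}x'$ (coming from iterated application of $ab = ba + a'b'$ and the Leibniz-type relations, using $(x)' = x'$), and more generally an expression for $a x \cdot a x \cdots$ with one factor $bx'$ inserted. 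Here the coefficients $\binom{n}{2} = \frac{n(n-1)}{2}$ and $\frac{n-1}{2}$, $\frac{n}{2}$ enter, and one must be careful that these are elements of $\mathbb{F}_2$ extracted from integer binomial coefficients — the parity of $\binom{n}{2}$ depending on $n \bmod 4$ is exactly what produces the even/odd case split and, downstream, the $\bmod 4$ behavior in the $\GL(\1+P)$ and $\GL(P)$ classifications.

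Concretely the steps are: (1) establish the normal-form expansion of $(ax)^n$ and of the "one $b$ inserted" terms in $\Sym P \otimes A$, isolating the $x^n$ and $x^{n-1}x'$ components; (2) sum these to read off the $(1,1)$ and $(2,1)$ entries of the matrix, $a^n$ and (for $n$ odd) $a^{n-1}b + \frac{n-1}{2}a^{n-2}a'(b'+a)$ resp. (for $n$ even) $\frac{n}{2}a^{n-2}a'(b'+a)$; (3) obtain the $(1,2)$ and $(2,2)$ entries by applying $d$ (the derivation on $A$, i.e. $'$) to the first column, using $(a^n)' = n a^{n-1}a'$, $(a')^2 = 0$, and $(a'b)' = a'b'$; (4) sanity-check the result is a group homomorphism by verifying it on a product $(a_1,b_1)(a_2,b_2)$, or at least check it against Proposition on $L_n$-representations of $\GL'_m$ under restriction and against small $n$ (e.g. $n=1$ gives the defining representation, $n=2$ gives $\1 \oplus \1$ with the stated action). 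I expect step (1) — pinning down the normal-form coefficients with the correct $\mathbb{F}_2$-values and correctly handling the non-commutativity of $A$ together with the $x'$-truncation — to be the main obstacle; once that identity is in hand, steps (2)–(4) are bookkeeping, with the derivation computation in (3) essentially automatic from the relations listed in the preliminaries on commutative algebras in $\Ver_4^+$.
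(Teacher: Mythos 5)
Your overall strategy --- expand $(ax+bx')^n$ in $\Sym P\otimes A$, track the single-$x'$ terms, and extract the $\mathbb F_2$-values of $\tfrac{n(n-1)}2$ --- is exactly the paper's computation of the first column, and your identification of the normal-form expansion as the main work is right (though note the paper's recursion gives $(x\otimes a)^n=x^n\otimes a^n+x^{n-1}x'\otimes\tfrac{n(n-1)}2a^{n-1}a'$, with exponent $n-1$, not $n-2$).

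The genuine gap is in how you propose to get the second column. You claim the $(1,2)$ and $(2,2)$ entries are the entry-wise $d$-images (primes) of the $(1,1)$ and $(2,1)$ entries. This is wrong in two ways. First, the correct constraint is that the operator commutes with the \emph{total} differential $D\otimes 1+1\otimes d_A$ on $\Sym^nP\otimes A$, where $D$ is the matrix of $d$ on $\Sym^nP$ (namely $x^n\mapsto nx^{n-1}x'$); writing it out gives $m_{12}=m_{11}'$ but $m_{22}=m_{11}+m_{21}'$ for $n$ odd --- your entry-wise prescription drops the $a^n$ in the $(2,2)$ slot, which comes from the $D\otimes 1$ part. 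Second, and more seriously, for $n$ even one has $d(x^n)=nx^{n-1}x'=0$, so commuting with $d$ imposes \emph{no} constraint relating the two columns: your step (3) would output $(1,2)=(a^n)'=na^{n-1}a'=0$, whereas the correct entry is $a^{n-1}a'\neq 0$, and the $(2,2)$ entry $a^n+a^{n-1}b'+a^{n-2}ba'$ contains the term $a^{n-2}ba'$ which cannot arise by priming anything in the first column. This is precisely why the paper, after computing the image of $x^n$, performs a second independent computation for even $n$: the image of $x^{n-1}x'$ is obtained as the product $(x\otimes a'+x'\otimes(a+b'))\cdot(x\otimes a+x'\otimes b)^{n-1}$, expanded using the same normal-form identity. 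You need to add that computation; without it the even case of the proposition is not proved and your method yields an incorrect matrix.
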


\begin{proof}
Throughout this proof, we use the identification $\Sym^n(P) \otimes A = \Sym^n_A(P \otimes A)$ to explicitly describe the $\GL(P)$-action on $\Sym^n(P)$.

    Say that $P$ has basis $x, x'$. Then $\Sym^n(P)$ has basis $x^n, x^{n - 1}x'$. If $n$ is odd, it suffices to define the action of $(a,b)$ on $x^n$; if $n$ is even, we have to compute the action on both. 

    On $P$, $(a, b)$ takes $x \otimes 1 \mapsto x \otimes a + x' \otimes b$. So
    \begin{equation*}
        x^n \otimes 1 \mapsto (x \otimes a + x' \otimes b)^n = (x \otimes a)^n + \sum_{k = 0}^{n - 1} (x \otimes a)^k \otimes (x' \otimes b) \otimes (x \otimes a)^{n - k - 1}.
    \end{equation*}
    We have
    \begin{equation*}
        (x \otimes a)^k \otimes (x' \otimes b) \otimes (x \otimes a)^{n - k - 1} = x^{n - 1}x' \otimes (ba^{n-1} + kba^{n-2}a'b'),
    \end{equation*}
    so
    \begin{align*}
        \sum_{k= 0}^{n-1} (x \otimes a)^k \otimes (x' \otimes b) \otimes (x \otimes a)^{n - k - 1}
        &= x^{n - 1}x' \otimes \left(nba^{n-1} + \sum_{k =0}^{n-1}ka^{n-2}a'b' \right) \\
        &= x^{n-1}x' \otimes \left(nba^{n-1} + \frac{n(n-1)}{2} a^{n-2}a'b' \right).
    \end{align*}
    Now we can recursively compute $(x \otimes a)^n$: write
    \begin{equation*}
        (x \otimes a)^n = x^n \otimes a^n + x^{n - 1}x' \otimes f_n(a),
    \end{equation*}
    then
    \begin{equation*}
        (x \otimes a)^{n + 1} = x^{n + 1} \otimes a^{n + 1} + x^n x' \otimes n a^n a' + x^n x' \otimes f_n(a)a,
    \end{equation*}so
    \begin{equation*}
        f_{n + 1}(a) = f_n(a)a + na^n a', f_1(a) = 0 \implies f_n(a) = \frac{n(n-1)}{2} a^{n - 1}a'.
    \end{equation*}
    Combining these, we get
    \begin{equation}\label{symnp}
        (x \otimes a + x' \otimes b)^n = x^n \otimes a^n + x^{n-1}x' \otimes \left( nba^{n-1} + \frac{n(n-1)}{2} a^{n-2}a'(a+b')\right).
    \end{equation}
    If $n$ is odd, $\frac{n(n-1)}{2} = \frac{n-1}{2}$. Also, $(a, b)(x^{n-1}x') = d((a,b)x^n)$, so we get
    \begin{equation*}
        (a, b) \mapsto \begin{pmatrix}
         a^n & a^{n - 1} a'\\
         a^{n - 1} b + \frac{n - 1}{2} a^{n - 2} a' (b' + a) & a^n + a^{n - 1}
         b'
       \end{pmatrix}.
    \end{equation*}

    If $n$ is even, we also need to compute the action of $(a, b)$ on $x^{n-1}x'$. Thus we need to simplify the expression
    \begin{equation*}
        (x \otimes a' + x' \otimes (a + b')) \otimes (x \otimes a + x' \otimes b)^{n - 1}.
    \end{equation*}
    Using \eqref{symnp}, we get that this is
    \begin{equation*}
        x^n \otimes a^{n-1}a' + x^{n-1}x' \otimes a^n(1 + a^{-1}b' + a^{-2}ba').
    \end{equation*}
    Therefore, the $\GL(P)$ action is given by
    \begin{equation*}
        (a, b) \mapsto \begin{pmatrix}
         a^n & a^{n - 1} a'\\
         \frac{n}{2} a^{n - 2} a' (b' + a) & a^n + a^{n - 1} b' + a^{n - 2}
         ba'
       \end{pmatrix}.
    \end{equation*}
\end{proof}

\begin{corollary}
    $\Sym^n P$ is an irreducible representation of $\GL(P)$ when $n \not\equiv 0 \pmod{4}$. If $n \equiv 0 \pmod{4}$, $\Sym^n P$ is an extension of the representation $(a, b) \mapsto a^n$ by
    \begin{equation*}
        (a, b) \mapsto a^n + a^{n-1}b' + a^{n-2}ba'.
    \end{equation*}
\end{corollary}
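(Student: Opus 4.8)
The plan is to read both assertions straight off the explicit matrix of Proposition~\ref{glp_symnp_matrix}, using the very short list of subobjects of $\Sym^n P$ in $\Ver_4^+$ and the fact that a subrepresentation is precisely a $\GL(P)$-stable subobject. Write $e_1=x^n$ and $e_2=x^{n-1}x'$ for the standard basis, so the matrix has column $j$ equal to the image of $e_j$. When $n$ is odd, $de_1=n\,x^{n-1}x'=e_2$, so $\Sym^nP$ has underlying object $P$ and $\im d=\langle e_2\rangle$ is its unique proper nonzero subobject; when $n$ is even, $d$ acts by $0$, so $\Sym^nP\cong\1\oplus\1$ and every subspace is a subobject.

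For $n$ odd the only thing to check is that $\langle e_2\rangle$ is not $\GL(P)$-stable: by Proposition~\ref{glp_symnp_matrix} the coaction sends $x^{n-1}x'$ to $e_1\otimes a^{n-1}a'+e_2\otimes(\cdots)$, and $a^{n-1}a'\neq 0$ in $\mathcal{O}(\GL(P))$ (an invertible multiple of $a'$), so $\langle e_2\rangle$ does not survive; hence $\Sym^nP$ is irreducible. For $n$ even there is a priori a whole $\mathbb{P}^1$ of lines to rule out, so I first cut the family down by restricting to the subgroup $M_1\cong\{(1,b)\}\subset\GL(P)$. Setting $a=1$, $a'=0$ in the matrix makes it diagonal, with $e_1\mapsto e_1$ and $e_2\mapsto(1+b')e_2$; thus $\Sym^nP|_{M_1}\cong\1\oplus L_\xi$, and since $\1\not\cong L_\xi$, the only $M_1$-subrepresentations — hence the only candidates for $\GL(P)$-subrepresentations — are $0$, $\langle e_1\rangle$, $\langle e_2\rangle$, and $\Sym^nP$.

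Now $\langle e_2\rangle$ is never $\GL(P)$-stable (again the $e_1$-component $a^{n-1}a'$ of the coaction of $x^{n-1}x'$ is nonzero), while the coaction of $e_1=x^n$ lies in $\langle e_1\rangle\otimes\mathcal{O}(\GL(P))$ if and only if its $e_2$-component $\frac{n}{2}\,a^{n-2}a'(b'+a)$ vanishes, i.e.\ if and only if $\frac{n}{2}$ is even, i.e.\ $n\equiv 0\pmod 4$. Combined with the odd case, this gives irreducibility for every $n\not\equiv 0\pmod 4$. When $n\equiv 0\pmod 4$ the same computation shows $\langle x^n\rangle$ is a subrepresentation on which $(a,b)$ acts by $a^n$, and the quotient, spanned by the image of $x^{n-1}x'$, is one-dimensional with $(a,b)$ acting by the lower-right matrix entry $a^n+a^{n-1}b'+a^{n-2}ba'$; this is exactly the claimed extension $0\to[(a,b)\mapsto a^n]\to\Sym^nP\to[(a,b)\mapsto a^n+a^{n-1}b'+a^{n-2}ba']\to 0$. (It is moreover non-split, since $\langle e_2\rangle$ is not stable, although the statement does not require this.)

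The only genuinely nontrivial point is the even case, where the set of subobjects is a projective line rather than a finite list; restricting to $M_1$, whose representation theory is already in hand (only the two irreducibles $\1$ and $L_\xi$), collapses this to the two coordinate lines, after which everything is a one-line inspection of matrix entries modulo $2$. For $n$ odd one just has to remember that ``subobject in $\Ver_4^+$'' means submodule of $P$, so $\im d$ is the unique candidate and no family needs to be handled at all.
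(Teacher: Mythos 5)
Your proposal is correct and follows essentially the same route as the paper, which states the corollary as an immediate consequence of the explicit matrix in Proposition~\ref{glp_symnp_matrix}: you simply read off the matrix entries, using that for $n$ odd the only candidate subobject is $\im d$ and for $n$ even the restriction to $M_1$ (or a direct check) reduces the candidate lines to the two coordinate lines, and your identification of the sub $(a,b)\mapsto a^n$ and quotient $(a,b)\mapsto a^n+a^{n-1}b'+a^{n-2}ba'$ matches the paper's later non-split sequence $0\to\chi^k\to\Sym^{4k}P\to\xi\chi^k\to 0$. The details you supply (the $M_1$-restriction to collapse the $\mathbb{P}^1$ of lines, and the nonvanishing of $a^{n-1}a'$) are exactly the verifications the paper leaves implicit.
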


\begin{definition}
    Let $T_n = \Sym^n P$ for $n \not\equiv 0 \pmod{4}$. Let $\chi$ be the character $(a, b) \mapsto a^4$ and $\xi$ be the character $(a, b) \mapsto 1 + a^{-1}b' + a^{-2}ba'$. We define $T_n$ for $n < 0$ by setting $n$ to be negative in Proposition $\ref{glp_symnp_matrix}$, or equivalently, $T_n := \chi^{-1} \otimes T_{n + 4}$.
\end{definition}
\begin{corollary}
The duals of $\xi, \chi, T_n$ are $\xi^* = \xi$, $\chi^* = \chi^{-1}$, $T_1^* = \xi \otimes T_{-1}$, $T_2^* = T_{-2}$, and $T_3^* = \xi \otimes T_{-3}$.
\end{corollary}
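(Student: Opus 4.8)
The plan is to prove the corollary by direct computation using the explicit matrices from Proposition \ref{glp_symnp_matrix}, together with the standard fact that for a representation given by a matrix $\rho(g)$, the dual representation is given by $g \mapsto \rho(g^{-1})^T$ (transpose-inverse), and then identifying the result with one of the known irreducibles $T_n$, $\xi \otimes T_n$, up to twist by a power of $\chi$. First I would record the inversion formula in $\GL(P)(A)$: the inverse of $(a,b)$ is $(a^{-1}, \ast)$ for an explicit $\ast$, which one gets by solving $\begin{pmatrix} a & a' \\ b & a+b'\end{pmatrix}\begin{pmatrix} c & c' \\ e & c+e'\end{pmatrix} = I$; since invertibility is unaffected by nilpotents and $a' , b'$ square to zero, this is a short calculation giving $c = a^{-1}$ and $e$ in terms of $a,b$. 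With the inverse in hand, the dual of $T_n = \Sym^n P$ is the representation $(a,b) \mapsto M_n((a,b)^{-1})^T$ where $M_n$ is the matrix in Proposition \ref{glp_symnp_matrix}.

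Next I would handle the four claims in turn. For $\xi^* = \xi$: $\xi$ is one-dimensional, $(a,b) \mapsto 1 + a^{-1}b' + a^{-2}ba'$, so its dual is $(a,b) \mapsto \xi((a,b)^{-1})$; substituting the inverse and simplifying (using $(a')^2 = (b')^2 = 0$ and $ab = ba + a'b'$, i.e. the commutativity relations in $\mathcal C$) should return $1 + a^{-1}b' + a^{-2}ba'$ again. For $\chi^* = \chi^{-1}$: $\chi$ is the grouplike character $(a,b)\mapsto a^4$, and $a^4$ is central (it lies in $\ker d$, indeed $(a^4)' = 0$ in characteristic $2$ times..., more precisely $(a^n)' = n a^{n-1} a'$ vanishes mod $2$ when $n$ is even — for $n=4$ it vanishes), so $\chi$ is a genuine character of the abstract group and its dual is its inverse $(a,b) \mapsto a^{-4}$, which is $\chi^{-1}$ by definition of $T_n$ for negative $n$. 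For $T_2^* = T_{-2}$: $T_2 = \Sym^2 P$ is $2$-dimensional with matrix given by the $n=2$ case; computing $M_2((a,b)^{-1})^T$ and comparing with the $n=-2$ specialization of the same formula (which is the definition of $T_{-2}$) should match on the nose, possibly after conjugating by the permutation swapping the two basis vectors $x^2, x x'$ — I should be careful about whether a basis change is needed and state the isomorphism explicitly. For $T_1^* = \xi \otimes T_{-1}$ and $T_3^* = \xi \otimes T_{-3}$: these are the $P$-valued (two-dimensional, upper-triangular-looking) cases; I compute $M_n((a,b)^{-1})^T$ for $n = 1, 3$, observe it is lower triangular, conjugate by the basis swap to make it upper triangular, and then check it equals the matrix of $T_{-n}$ scaled (entrywise) by the scalar $\xi((a,b))$ — equivalently, factor out $\xi$ and recognize the remainder as $T_{-n}$.

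The main obstacle I expect is bookkeeping in the two-dimensional cases $T_1^*$ and $T_3^*$: transposing an inverse of a $2\times 2$ matrix over the noncommutative ring $A$ (recall multiplication in $A$ is not commutative in the ordinary sense, only $ab = ba + a'b'$) requires care about the order of factors, and one must consistently use $(a')^2 = 0$, $a' a = a a'$ (which follows since $a'$ is in $\ker d$ hence central, wait — actually $a' = da$ need not be central, but $(a')^2=0$ and $a'b' = ab - ba$), and the identity $(a^n)' = n a^{n-1} a'$ to collapse expressions. A secondary subtlety is pinning down the correct $\GL(P)$-equivariant isomorphism realizing each claimed duality — whether it is the identity matrix, the antidiagonal swap of basis vectors, or something with a twist — and verifying that this chosen map actually intertwines the two actions rather than merely matching the characters of the representations. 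I would organize the computation so that the inversion formula and the transpose-inverse recipe are established once, and then each of the five identities is a one-paragraph verification citing Proposition \ref{glp_symnp_matrix}.
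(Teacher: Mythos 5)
Your proposal is correct and takes essentially the same route as the paper, which proves the corollary by directly computing the transpose of the inverse matrix (streamlining the computation by factoring $(a,b)$ into its $M_1\times H_1$ parts before inverting and transposing) and recognizing the result as $\xi(a,b)\otimes T_{-n}(a,b)$. One minor note: your hesitation about $a'$ is unnecessary, since $a'\in\im d\subset\ker d$ (as $d^2=0$) and $\ker d$ is central, so $a'$ commutes with everything, which simplifies the bookkeeping you were worried about.
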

\begin{proof}
    The proof is by direct computation: namely, the transpose of the inverse of $\begin{pmatrix} a & a' \\ b & a + b' \end{pmatrix}$ can be computed by 
    \begin{align*}
        \left(\begin{pmatrix} a & a' \\ b & a + b' \end{pmatrix}^{-1}\right)^T &= \left(\begin{pmatrix}
            a & a' \\ 0 & a
        \end{pmatrix}^{-1} 
        \begin{pmatrix}
            1 & 0 \\ ba^{-1} & 1 + (ba^{-1})'
        \end{pmatrix}^{-1} \right)^T\\
        &= \left(\begin{pmatrix}
            a^{-1} & a^{-2}a' \\ 0 & a^{-1}
        \end{pmatrix}
        \begin{pmatrix}
            1 & 0 \\
            ba^{-1}(1 + (ba^{-1})') & 1 + (ba^{-1})'
        \end{pmatrix}\right)^T \\
        &= \begin{pmatrix}
            1 + (ba^{-1})' & 0 \\
            ba^{-1}(1 + (ba^{-1})') & 1
        \end{pmatrix}
        \begin{pmatrix}
            a^{-1} & a^{-2}a' \\ 0 & a^{-1}
        \end{pmatrix} \\
        &= \xi(a,b) \otimes \begin{pmatrix}
            1 & 0 \\
            ba^{-1} & 1 + (ba^{-1})'
        \end{pmatrix}
        \begin{pmatrix}
            a^{-1} & a^{-2}a' \\ 0 & a^{-1}
        \end{pmatrix} \\
        &= \xi(a, b) \otimes T_{-1}(a,b).
    \end{align*}
\end{proof}

\begin{proposition}
    The distinct irreducible representations of $\GL(P)$ are exactly the $T_n$ for $n \not\equiv 0 \pmod{4}$, $\chi^k$ for $k \in \mathbb{Z}$, $\xi \otimes \chi^k$ for $k \in \mathbb{Z}$, and $\xi \otimes T_n$ for $n \not\equiv 0, 2 \pmod{4}$. We have the following composition series for tensor products, which we can extend to arbitrary relations for $T_i
  \otimes T_j$ using that $T_i = \chi \otimes T_{i - 4}$:
  \begin{align*}
    \xi^2 & = \1 \\
    \chi T_{i - 4} & = T_i \\
    T_1^2 & = [T_2, T_2]\\
    T_1 T_2 & = [\xi T_3, T_3]\\
    T_1 T_3 & = [\xi \chi, \chi, \chi, \xi \chi]\\
    T_2^2 & = [\xi \chi, \chi + \chi, \xi \chi]\\
    T_2 T_3 & = [\chi T_1, \xi \chi T_1]\\
    T_3^2 & = [\chi T_2, \chi T_2] .
  \end{align*}
  Here the head is first and the socle is last. For the remainder of the section, we drop the tensor product sign between representations.
\end{proposition}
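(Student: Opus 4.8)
The plan is to first pin down the classification of irreducibles, then compute all the tensor product decompositions by explicit matrix bookkeeping, and finally read off the composition series.

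\textbf{Step 1: Classification.}
The group $\GL(P)$ surjects onto $\GL(\1) = \mathbb{G}_m'$ (the functor $(a,b)\mapsto a$ on $A^\times$, which as a functor of points is exactly $\mathbb{G}_m'(A)=A^\times$) with kernel the subgroup $M_1$ of matrices $(1,b)$. So every irreducible $\GL(P)$-representation $L$ has a highest weight with respect to the cocharacter $\mathbb{G}_m \to \GL(P)$, $t\mapsto (t,0)$, and by Theorem 2.? this highest weight is an irreducible representation of the centralizer of $\mathbb{G}_m$, which one computes to be $\mathbb{G}_m' $ itself. Hence $\hw$ gives an injection from $\Irrep(\GL(P))$ into $\Irrep(\mathbb{G}_m')=\{L_n : n\in\mathbb Z\}$. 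To see which $L_n$ are actually highest weights, I would exhibit, for each residue class of $n$ mod $4$, an explicit $\GL(P)$-representation whose highest weight is $L_n$: namely $\Sym^n P$ (equivalently $T_n$) for $n\not\equiv 0$, using Proposition~\ref{glp_symnp_matrix} and its corollary, and $\chi^k = (\Sym^4 P$-related$)$ together with $\xi\otimes\chi^k$ and $\xi\otimes T_n$ for the remaining weights. Counting: for $n\equiv 0,2\pmod 4$ the weight space $L_n$ is one-dimensional so there are two irreducibles of that highest $\mathbb{G}_m$-weight only when we can twist by $\xi$; one checks $\xi\otimes\chi^k\not\cong\chi^k$ but $\xi\otimes T_n\cong T_n$ when $n\equiv 2$ (this is the "only one simple representation in degree $2\bmod 4$" phenomenon from the introduction), which forces the list to be exactly as stated. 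The fact that there are no further irreducibles follows because $\mathcal O(\GL(P))\cong \mathcal O(M_1)\otimes \mathcal O(\mathbb{G}_m')$ as in the proof of Proposition~\ref{glm+np_irrep}, so every highest weight in the image of $\hw$ is realized, and $M_1$ has only the two one-dimensional irreducibles $\1,\xi$ by the earlier proposition.

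\textbf{Step 2: The tensor product relations.}
The relation $\xi^2=\1$ is immediate since $\xi$ is a one-dimensional character and $\xi(a,b)^2 = (1+a^{-1}b'+a^{-2}ba')^2 = 1$ because $(a')^2=0=(b')^2$ and cross terms involve products of two primed elements. The relation $\chi T_{i-4}=T_i$ is the definition. For the remaining six relations I would work with the explicit $2\times2$ matrices for $T_1,T_2,T_3$ from Proposition~\ref{glp_symnp_matrix} (with $n=1,2,3$), form the $4\times 4$ Kronecker product $T_i(a,b)\otimes T_j(a,b)$, and find an invariant flag: the subrepresentations are spanned by elements of $P\otimes P$ (a $4$-dimensional object isomorphic to $2P$) that are $\GL(P)$-stable, and since everything must be closed under the $d$-action one gets $2$-step or longer filtrations with subquotients among $\1$-ish ($\chi^k$, $\xi\chi^k$) and $P$-ish ($T_n$, $\xi T_n$) pieces. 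The highest $\mathbb{G}_m$-weight of $T_i\otimes T_j$ is $i+j$, which by Step 1 and statement (3) of the highest-weight theorem pins the head; the tail/socle is then forced by taking duals (using the dual formulas $T_1^*=\xi T_{-1}$, $T_2^*=T_{-2}$, $T_3^*=\xi T_{-3}$, $\chi^*=\chi^{-1}$, $\xi^*=\xi$) and by the requirement that total dimension and total $\mathbb{G}_m$-character match. Concretely: $\dim(T_i\otimes T_j)=4$ with $\mathbb{G}_m$-character $t^{i+j}(1+t^{-1})^2 = t^{i+j}+2t^{i+j-1}+t^{i+j-2}$, so each product is a length-two or length-four extension of pieces whose highest weights are the corresponding composition factors of $L_i\otimes L_j$ (Theorem statement 3), and matching parities of $i+j, i+j-1, i+j-2$ mod $4$ against the classification produces exactly the eight listed answers. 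For instance $T_1\otimes T_1$ has $\mathbb{G}_m$-character $t^2+2t+1$, total dimension $4$; the only way to write this with pieces from the list and head of highest weight $2$ is $[T_2,T_2]$ (note $T_2$ is one-dimensional of weight $2$... wait, $T_2=\Sym^2 P$ is $2$-dimensional) — so $T_2$ is a $2$-dimensional irreducible of highest weight $2$ with character $t^2+t$, and $[T_2,T_2]$ has character $2t^2+2t$; but $T_1\otimes T_1$ has character $t^2+2t+1$... so the correct read is that $\Sym^2 P$ has basis $x^2, xx'$ of $\mathbb{G}_m$-weights $2,1$, hence character $t^2+t$, and I must recheck the bookkeeping — this is exactly the kind of routine but error-prone computation to carry out carefully, cross-checking each relation against dimension, character, duality, and associativity of $\otimes$.

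\textbf{Step 3: Consistency checks.}
Having guessed all eight composition series, I would verify internal consistency: apply $\chi\otimes-$ and $\xi\otimes-$ to each relation and check it reduces to another listed one or an already-derived consequence; check that dualizing a relation (using $(X\otimes Y)^*\cong Y^*\otimes X^*$ and the dual formulas) gives a valid relation; and spot-check associativity on a triple like $T_1\otimes T_1\otimes T_1$ computed two ways. These checks, together with the fact that $T_i\otimes T_j$ is genuinely $4$-dimensional (a subobject of $P^{\otim2}$, no cancellation since $\GL(P)$ acts faithfully on $P$ so the action on $P\otimes P$ has no kernel issues), are enough to nail down the multiplicities.

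\textbf{Main obstacle.}
The conceptual content — the classification in Step 1 — is short. The genuine work is the brute-force determination of the eight composition series in Step 2: one must correctly expand Kronecker products of the $\GL(P)$-matrices, correctly identify $\GL(P)$-stable subobjects that are also $d$-stable (the $d$-stability is what makes "$P$" indecomposable and is easy to mishandle), and correctly disentangle which two-dimensional subquotients are $T_n$ versus $\xi\otimes T_n$ versus $\chi^a\oplus\chi^b$. I expect the hardest individual case to be $T_2\otimes T_2$, where the head is $[\xi\chi, \chi+\chi, \xi\chi]$ — a length-four series with a repeated middle factor $\chi\oplus\chi$ — since distinguishing whether the middle is $\chi\oplus\chi$ or a nonsplit self-extension requires computing the actual matrix entries (the off-diagonal $b$-dependent terms) rather than just characters; the extension is controlled by $\Ext^1_{\GL(P)}(\chi,\chi)$, which I would compute from $H^1$ of the relevant Hopf algebra, and the $M_1$-restriction (where $\Ext^1(\1,\1)$ is visibly nonzero from $k[X']$) is the key input. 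Similarly $T_1\otimes T_3=[\xi\chi,\chi,\chi,\xi\chi]$ needs the same care.
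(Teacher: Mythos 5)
There are two genuine gaps, and they sit precisely where your proposal leans on structure that $\GL(P)$ does not have. In Step 1, the highest-weight mechanism cannot get off the ground: $P$ is indecomposable, so any homomorphism $\mathbb{G}_m \to \GL(P)$ of the kind used in the highest-weight theorem acts on $P$ by a single weight (your $t\mapsto (t,0)$ is the scalar matrix $\operatorname{diag}(t,t)$ since $t\in\ker d$), hence its centralizer is all of $\GL(P)$ and the theorem is vacuous. Moreover the claimed injection $\Irrep(\GL(P))\hookrightarrow\Irrep(\mathbb{G}'_m)$ is false: for odd $n$ the two non-isomorphic irreducibles $T_n$ and $\xi T_n$ restrict to the same $L_n$ on $H_1=\mathbb{G}'_m$ (and the statement you are proving has two simples in most degrees, while $\mathbb{G}'_m$ has one per degree). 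The map $(a,b)\mapsto a$ is also not a group homomorphism ($M_1$ is not normal), and the factorization $\mathcal{O}(\GL(P))\cong\mathcal{O}(M_1)\otimes\mathcal{O}(H_1)$ is only an isomorphism of schemes, not a Gauss decomposition with positive/negative weight parts, so the induction argument of Proposition \ref{glm+np_irrep} does not transfer. The completeness argument the paper actually uses, and which you need in some form, is different: $\Rep(\GL(P))$ is tensor-generated by $P$, so every irreducible is a subquotient of $T_1^{\otimes r}\otimes(T_1^*)^{\otimes s}$; since $T_1^*=\chi^{-1}\xi T_3$, it suffices to decompose the products $T_1^{\otimes r}T_3^{\otimes s}$, and the fact that the resulting list of composition factors is closed under tensoring proves the classification and the eight relations simultaneously.

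In Step 2 the bookkeeping device is also unavailable. Under the only torus at hand (the scalar $\mathbb{G}_m$) both basis vectors $x,x'$ of $P$ have weight $1$, so $T_n$ has character $2t^n$, every composition factor of $T_i\otimes T_j$ has the same degree $i+j$ (this is exactly the $\mathbb{Z}$-grading by blocks), and the character $t^{i+j}(1+t^{-1})^2$ you write down is wrong; degrees and parities therefore cannot separate $\chi$, $\xi\chi$, $T_{i+j}$, $\xi T_{i+j}$ inside a given product, nor can they identify the head. You noticed this inconsistency at $T_1^{\otimes 2}$ but did not repair it. What settles each case in the paper is the explicit work you defer: for $T_1^2$ and $T_1T_2$, the exact sequences $0\to\wedge^2T_1\to T_1^{\otimes 2}\to S^2T_1\to 0$ with $S^2T_1=\wedge^2T_1=T_2$, and $0\to\wedge^3T_1\to T_1\otimes\wedge^2T_1\to S^{(2,1)}T_1\to 0$ with $S^{(2,1)}T_1\cong\xi T_3$; for $T_1T_3$, $T_2^2$, $T_2T_3$, $T_3^2$, writing the $4\times4$ matrices from Proposition \ref{glp_symnp_matrix} and changing basis to a (block) triangular form, which exhibits the filtration, fixes head versus socle, and shows directly that the middle layer of $T_2^{\otimes 2}$ is $\chi\oplus\chi$ (the relevant $2\times2$ block is visibly diagonal, so no $\Ext$ computation is needed). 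Without carrying out those computations, duality, dimension counts and consistency checks do not pin down the answers, so as written the proposal does not yet constitute a proof.
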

\begin{proof}
    The category of representations of $\GL(P)$ is generated by $P$, so it suffices to find all subquotients of $T_1^{ r} T_1^{* s}$. It's easy to see that $\xi^2 = \1$. Since $T_1^* = \xi T_{-1} = \chi^{-1} \xi T_3$, it suffices to look at subquotients of $T_1^{ r} T_3^{ s}$. 
    
    We have an exact sequence
    \begin{equation*}
        0 \to \wedge^2 T_1 \to T_1^{ 2} \to S^2 T_1 \to 0
    \end{equation*}
    and $S^2 T_1 = \wedge^2 T_1$, so $T_1^2 = T_2$.
    Also,
    \begin{equation*}
        0 \to \wedge^3 T_1 \to T_1 \otimes \wedge^2 T_1 \to S^{(2, 1)}T_1 \to 0.
    \end{equation*}
    Note that $\wedge^3 T_1 = S^3 T_1 = T_3$, while we can explicitly write $S^{(2, 1)} T_1$ as a quotient of $T_1 \otimes \wedge^2 T_1$ by the exchange relations. This quotient is the representation
    \begin{equation*}
        (a, b) \mapsto \begin{pmatrix}
       a^3 + a^2 a' + a^2 b' + aba' & a^2 a' + aa' b'\\
       ba^2 + bab' + b^2 a' & a^3 + aba' + ba' b'
     \end{pmatrix}
    \end{equation*}
    which is $\xi T_3$.
    
    The rest we compute explicitly. The representation of $\GL (P)$ on
  $T_1 T_3$ (with basis $e_1, e_2, e_3, e_4$, $d e_1 = e_2 + e_3$, $d
  e_2 = d e_3 = e_4$, and $d e_4 = 0$) is
  \begin{equation*} (a, b) \mapsto \begin{pmatrix}
       a^4 & a^3 a' & a^3 a' & 0\\
       a^3 b + a^2 a' b' & a^4 + a^3 b' & a^2 ba' & a^3 a' + a^2 a' b'\\
       a^3 b + a^2 a' b' & a^2 ba' & a^4 + a^3 b' & a^3 a' + a^2 a' b'\\
       a^3 b' + a^2 b^2 + a^2 ba' + aba' b' & a^3 b + a^2 bb' & a^3 b + a^2
       bb' & a^4
     \end{pmatrix} . \end{equation*}
  Changing basis to $e_2 + e_3, e_4, e_1, e_2$, we get the upper triangular matrix
  \begin{equation*} \begin{pmatrix}
       a^4 + a^3 b' + a^2 ba' & a^3 a' + a^2 a' b' & a^3 b + a^2 a' b' & a^2
       ba'\\
       0 & a^4 & a^2 b^2 + a^2 ba' + a^3 b' + aba' b' & a^3 b + a^2 bb'\\
       0 & 0 & a^4 & a^3 a'\\
       0 & 0 & 0 & a^4 + a^3 b' + a^2 ba'
     \end{pmatrix} \end{equation*}
  implying that $T_1 T_3 = 2 \xi \chi + 2 \chi$ in the Grothendieck ring, as
  desired.
  
  The representation of $\GL (P)$ on $T_2 \otimes T_2$ (with basis $e_1,
  e_2, e_3, e_4$, $d e_i = 0$) is
  \begin{equation*} (a, b) \mapsto \begin{pmatrix}
       a^4 & a^3 a' & a^3 a' & 0\\
       a^2 a' b' + a^3 a' & a^4 + a^3 b' + a^2 ba' & 0 & a^3 a' + a^2 a' b'\\
       a^2 a' b' + a^3 a' & 0 & a^4 + a^3 b' + a^2 ba' & a^3 a' + a^2 a' b'\\
       0 & a^3 a' & a^3 a' & a^4
     \end{pmatrix} . \end{equation*}
  Changing basis to $e_2 + e_3, e_4, e_1, e_2$, we get the upper triangular
  matrix
  \begin{equation*} \begin{pmatrix}
       a^4 + a^3 b' + a^2 ba' & a^3 a' + a^2 a' b' & a^3 b + a^2 a' b' & 0\\
       0 & a^4 & 0 & a^3 a'\\
       0 & 0 & a^4 & a^3 a'\\
       0 & 0 & 0 & a^4 + a^3 b' + a^2 ba'
     \end{pmatrix} \end{equation*}
  so $T_2^2 = 2 \xi \chi + 2 \chi$ in the Grothendieck ring.
  
  The representation of $\GL (P)$ on $T_2 T_3$ with basis $e_1,
  e_2, e_3, e_4, d e_1 = e_2, d e_3 = e_4$ is
  \begin{equation*} (a, b) \mapsto \begin{pmatrix}
       a^5 & a^4 a' & a^4 a' & 0\\
       a^4 b + a^3 a' b' + a^4 a' & a^5 + a^4 b' & a^3 ba' & a^4 a' + a^3 a'
       b'\\
       a^4 a' + a^3 a' b' & 0 & a^5 + a^4 b' + a^3 ba' & a^4 a' + a^3 a' b'\\
       a^3 ba' + a^2 ba' b' & a^4 a' & a^4 b + a^4 a' + a^3 bb' + a^2 b^2 a' &
       a^5 + a^3 ba' + a^2 ba' b'
     \end{pmatrix} \end{equation*}
  and, changing basis to $e_1, e_2, e_2 + e_3, e_4$, we get the block lower
  triangular matrix
  \begin{equation*} \begin{pmatrix}
       a^5 & a^4 a' & 0 & 0\\
       a^4 b & a^5 + a^4 b' & 0 & 0\\
       a^4 a' + a^3 a' b' & 0 & a^5 + a^4 b' + a^3 ba' & a^4 a' + a^3 a' b'\\
       a^3 ba' + a^2 ba' b' & a^4 a' & a^4 b + a^3 bb' + a^2 b^2 a' & a^5 +
       a^3 ba' + a^2 ba' b'
     \end{pmatrix} . \end{equation*}
  The upper left block is $T_5 = \chi T_1$, while the lower right block is
  $\xi \chi T_1$.
  
  Finally, for $T_3 \otimes T_3$,
  \begin{equation*} (a, b) \mapsto \begin{pmatrix}
       a^6 & a^5 a' & a^5 a' & 0\\
       a^5 b + a^4 a' b' & a^6 + a^5 b' & a^4 ba' & a^5 a' + a^4 a' b'\\
       a^5 b + a^5 a'  & a^4 ba' & a^6 + a^5 b' & a^5 a' + a^4 a' b'\\
       a^5 b' + a^4 b^2 & a^5 b + a^4 bb' + a^5 a' & a^5 b + a^4 bb' & a^6
     \end{pmatrix} \end{equation*}
  and changing basis to $e_1, e_2, e_2 + e_3, e_4$, we get
  \begin{equation*} \begin{pmatrix}
       a^6 & a^5 a' & 0 & 0\\
       a^5 a' + a^4 a' b' & a^6 + a^5 b' + a^4 ba' & 0 & 0\\
       a^5 b + a^5 a'  & a^4 ba' & a^6 + a^5 b' + a^4 ba' & a^5 a' + a^4 a'
       b'\\
       a^5 b' + a^4 b^2 & a^5 b + a^5 a' + a^4 bb' & a^5 a' & a^6
     \end{pmatrix} \end{equation*}
  so $T_3^2 = 2 T_6 = 2 \chi T_2$ in the Grothendieck ring.

  Hence, the finite-dimensional irreducible representations of $\GL(P)$ are exactly $\chi^n$, $\xi \otimes \chi^n$, $T_n$ for $n \not\equiv 0 \pmod{4}$, and $\xi \otimes T_n$ for $n \equiv 1, 3 \pmod{4}$.
\end{proof}

In particular, we can define a notion of degree on the irreducible $\GL(P)$-representations via the inclusion of $\mbb{G}_m \subset \GL(P)$, $a \mapsto \begin{pmatrix} a & 0 \\ 0 & a \end{pmatrix}$ for $a \in A^\times$: the degree of an irreducible $\GL(P)$-representation is its degree as a $\mbb{G}_m$-representation. Hence, we can characterize $\Rep(\GL(P))$ as follows:

\begin{proposition}
    The category $\Rep(\GL(P))$ is $\mbb{Z}$-graded by degree, and it has exactly one block in each degree: 
    \begin{equation*}
         B_n = \begin{cases}
       \langle \chi^{n / 4}, \xi \chi^{n / 4} \rangle & n \equiv 0
       \pmod{4}\\
       \langle T_n, \xi T_n \rangle & n \equiv 1, 3 \pmod{4}\\
       \langle T_n \rangle & n \equiv 2 \pmod{4}
     \end{cases}. 
    \end{equation*}
\end{proposition}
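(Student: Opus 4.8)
The plan is to establish two things: first, that the indecomposable objects listed actually lie in the claimed blocks, and second, that there are no extensions \emph{between} objects of different degree and no extensions \emph{between} the two simples in a fixed degree $n \equiv 0, 1, 3 \pmod 4$ beyond what is needed to see they are a single block. Since $\Rep(\GL(P))$ is a tensor category in which every object is built from $P$, and $P$ has degree $1$ under the $\mathbb G_m \subset \mathbb G'_m \subset \GL(P)$ grading, the whole category inherits a $\mathbb Z$-grading; I would first record that the $\mathbb G_m$-action (via $f \colon \mathbb G_m \to \GL(P)$, $a \mapsto \diag(a,a)$) commutes with $\phi(\pi_1)$ and hence decomposes $\Rep(\GL(P))$ into a direct sum of subcategories $\Rep(\GL(P))_n$ indexed by $n \in \mathbb Z$, with $\Hom$ and $\Ext^1$ vanishing between different graded pieces. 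This immediately gives that the blocks refine the degree grading, i.e. each block is contained in some $B_n$.

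Next I would check that each $B_n$ as listed is nonempty and closed: for $n \equiv 0 \pmod 4$ the relevant simples are $\chi^{n/4}$ and $\xi\chi^{n/4}$, for $n \equiv 2$ only $T_n$, and for $n \equiv 1, 3$ the pair $T_n, \xi T_n$ — and by the classification proposition these are \emph{all} the simples of degree $n$. So $B_n$ at least contains all degree-$n$ simples, and to show it is a single block I must exhibit a nonsplit extension (in some indecomposable object) linking the two simples when there are two of them. For $n \equiv 1, 3 \pmod 4$, the tensor product computations already in the excerpt do the job: e.g. $T_1 T_3 = [\xi\chi,\chi,\chi,\xi\chi]$ and $T_2^2 = [\xi\chi, \chi+\chi, \xi\chi]$ have both $\chi$ and $\xi\chi$ as composition factors in an indecomposable object (one checks indecomposability from the explicit upper-triangular matrix form, since the off-diagonal entries are not all zero after any base change), which forces $\chi^{n/4}$ and $\xi\chi^{n/4}$ into the same block; twisting by $\chi^{k}$ moves this between all degrees $\equiv 0$. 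Similarly $T_1^2 = [T_2,T_2]$ (a nonsplit self-extension of $T_2$, from the corollary on $\Sym^n P$ with $n=4$ after twisting, or directly) handles $n \equiv 2$, and for $n \equiv 1,3$ one produces a nonsplit extension of $T_n$ by $\xi T_n$ or vice versa inside $T_1 T_2 = [\xi T_3, T_3]$ and $T_2 T_3 = [\chi T_1, \xi\chi T_1]$ and the like, again twisting by powers of $\chi$ to cover all such degrees.

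Finally, I must rule out a block being \emph{smaller} than $B_n$ in the cases with two simples — that is, show $\xi \otimes -$ really does link the two simples rather than them being separate blocks. This is exactly what the nonsplit extensions above accomplish, so the two halves of the argument meet here. The main obstacle I anticipate is the indecomposability bookkeeping: I need to be confident that the $4$-dimensional or $2$-dimensional modules appearing as tensor products are genuinely indecomposable (not merely have the right composition factors), which requires reading off from the explicit matrices that the module has a one-dimensional socle or, equivalently, that $\Hom$ from the putative sub is one-dimensional — a finite but slightly delicate check. Once indecomposability is in hand, the statement follows formally: the degree grading gives "at most one block per degree is populated by each simple," the tensor-product extensions give "the simples of a fixed degree all lie in one block," and the classification gives "these are all the simples," so $\Rep(\GL(P)) = \bigoplus_n B_n$ with each $B_n$ a single block of the stated form.
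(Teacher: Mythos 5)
Your proposal follows essentially the same route as the paper: split $\Rep(\GL(P))$ into degree pieces via the central $\mathbb{G}_m$, invoke the classification of simples in each degree, and glue the two simples of a degree $\not\equiv 2 \pmod{4}$ into one block using nonsplit extensions extracted from (twists of) the symmetric-power and tensor-product computations --- the paper uses $T_{4k}$, $\chi^{k-1}T_4T_1$ and $\chi^k T_1T_2$ where you use subquotients of $T_2^2$ (or $T_1T_3$), $T_2T_3$ and $T_1T_2$, an immaterial difference. (Two minor points: your sentence beginning ``For $n \equiv 1,3$'' actually handles the $n \equiv 0 \pmod{4}$ case, and the self-extension check for $n \equiv 2 \pmod{4}$ is unnecessary, since a degree with a single simple automatically constitutes one block.)
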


\begin{proof}
  These are exactly the simple objects of each degree, so it suffices to show
  that when $n \not\equiv 2 \pmod{4}$, there are nontrivial extensions
  between the generators listed.
  
  For degree $4 k$, note that we have a non-split exact sequence $0 \rightarrow \chi^k
  \rightarrow T_{4 k} \rightarrow \xi \chi^{k} \rightarrow 0$. For
  degree $4 k + 1$, note that $T_4$ is a nontrivial extension of $\chi$ by $\xi$, so $\chi^{k - 1} T_4 T_1$ is a nontrivial extension
  of $\chi^k T_1$ by $\xi \chi^k T_1$. For
  degree $4 k + 3$, we showed above that $T_1 T_2$ is a nontrivial extension of $T_3$ by $\xi T_3$, so $\chi^k T_1 T_2$ is a nontrivial extension of
  $\chi^k T_3$ by $\xi \chi^k \otimes T_3$.
\end{proof}

We can now describe the Ext quiver for the degree 0 block. Let $G = \GL(P)$; then we have $\mcal{O}(G) = k[A, B, A^{-1}, A', B']/(A')^2 = (B')^2 = 0$, where $A$ corresponds to the function $(a, b) \mapsto a$ and $B$ to $(a, b) \mapsto b$. Then the characters of $G$ can be treated as elements of $\mcal{O}(G)$, e.g. $\1$ as $1 \in \mcal{O}(G)$ and $\xi$ as $1 + (BA^{-1})' \in \mcal{O}(G)$.
\begin{proposition}
Let $\xi \in \mathcal{O}(G)$ be $1 + (BA^{-1})'$. Elements of $\Ext(\1, \1)$ correspond to primitive elements in $\mathcal{O}(G)$, while elements of $\Ext(\1, \xi)$ correspond to skew-primitive elements $f \in \mathcal{O}(G)$ with $\Delta(f) = 1 \otimes f + f \otimes \xi$.
\end{proposition}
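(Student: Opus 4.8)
\emph{Approach.} The plan is to run the standard description of $\Ext^1$ between one-dimensional representations through the comodule structure, being careful about the $\Ver_4^+$-structure and about compatibility with the fundamental group $\pi:=\pi_1(\Ver_4^+)$. Write $H:=\mathcal O(G)$. A representation is an $H$-comodule in $\Ver_4^+$; fixing the left/right convention so that the skew-primitivity relation below comes out as stated, we may take it to be a left comodule $\rho\colon M\to H\otimes M$, and $\Ext^1_{\Rep G}(\1,M)$ is the group of Yoneda extensions $0\to M\to E\to\1\to 0$ in $\Rep G$. I treat $M=\1$ and $M=\xi$ uniformly: both are one-dimensional, given by a grouplike $g\in H$ with $g=1$ (resp.\ $g=\xi$). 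I first record that $\xi'=d\xi=((BA^{-1})')'=0$, so $\xi\in\ker d$; this is exactly what makes $\xi$ a character of a group scheme in $\Ver_4^+$, and it forces $\xi$ to restrict to $1$ on $\pi$.

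\emph{Cocycle description.} Given an extension $0\to M\to E\to\1\to 0$, pick $v\in E$ lifting a generator of the quotient $\1$ and let $w$ be a generator of the image of $M$ in $E$. Since $M$ and $\1$ carry the trivial $\pi$-action we have $dw=0$, and since the image of $v$ in the quotient is $d$-closed we get $dv\in\langle w\rangle$; hence the coaction is forced to have the shape
\[
\rho_E(w)=g\otimes w,\qquad \rho_E(v)=1\otimes v+f\otimes w
\]
for a unique $f\in H$. The axioms then translate as follows. Counitality gives $\epsilon(f)=0$. Coassociativity gives $\Delta(f)=1\otimes f+f\otimes g$, i.e.\ $f$ primitive for $M=\1$ and the stated skew-primitivity $\Delta(f)=1\otimes f+f\otimes\xi$ for $M=\xi$. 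Finally the two "$\Ver_4^+$-type'' requirements — that $\rho_E$ be a morphism in $\Ver_4^+$, and that its restriction along $\pi\hookrightarrow G$ (equivalently along the Hopf surjection $H\twoheadrightarrow\mathcal O(\pi)\cong k[T]/T^2$) be the canonical $\pi$-coaction on $E$ — give $f'=c(g-1)$ and $\bar f=cT$, where $c\in k$ is $0$ precisely when the underlying object of $E$ is $\1\oplus\1$ and nonzero precisely when it is $P$. Conversely, each such $f$ builds an extension by the same formulas (the comodule axioms are routine to verify); and since $c$ is recovered as the $T$-coefficient of $\bar f$, extensions correspond bijectively to the $f\in H$ with $\epsilon(f)=0$ satisfying the (skew-)primitivity relation.

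\emph{Passing to $\Ext$; the main point.} Changing the lift $v\mapsto v+\mu w$ ($\mu\in k$) replaces $f$ by $f+\mu(g-1)$ and changes nothing else, so $\Ext^1_{\Rep G}(\1,M)$ is this cocycle space modulo $k(g-1)$. For $M=\1$ we have $g-1=0$, no quotient is taken, and $\Ext^1(\1,\1)$ is exactly the set of primitive elements of $H$. For $M=\xi$ we obtain the $\xi$-skew-primitive elements modulo the line $k(\xi-1)$ (note $\epsilon(f)=0$ is automatic here, by applying $\epsilon$ to the skew-primitivity relation), which is the assertion. The step I expect to be the main obstacle is confirming that the $\Ver_4^+$- and $\pi$-compatibility conditions do not shrink the cocycle space below the naive set of (skew-)primitives — equivalently, that a primitive (resp.\ $\xi$-skew-primitive) $f$ automatically satisfies $f'=c(g-1)$ with $c$ the $T$-coefficient of $\bar f$. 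For $M=\1$ this amounts to $\mathrm{Prim}(H)\subseteq\ker d$, so that $f'=0$ is automatic; this, and its analogue for $M=\xi$, is a direct computation with the explicit coproducts $\Delta(A)=A\otimes A+A'\otimes B$ and $\Delta(B)=A\otimes B+B\otimes A+B'\otimes B$ of $H$ (together with $d^2=0$ and $\xi'=0$), and is the only place the particular structure of $\mathcal O(\GL(P))$ enters rather than formal coalgebra manipulation.
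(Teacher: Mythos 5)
Your cocycle setup is correct and is essentially the standard argument that the paper invokes by citing \cite{etingof_tensor_2015}, Proposition 1.9.12, with the right extra bookkeeping for $\Ver_4^+$: the coaction shape $\rho_E(v)=1\otimes v+f\otimes w$, counit and coassociativity giving $\epsilon(f)=0$ and $\Delta(f)=1\otimes f+f\otimes g$, the $d$-compatibility giving $f'=c(g-1)$, the $\pi_1$-compatibility giving $\bar f=cT$, and the coboundary analysis $f\mapsto f+\mu(g-1)$ (your explicit quotient by $k(\xi-1)$ for $\Ext(\1,\xi)$ is in fact more precise than the paper's wording, and consistent with the coboundary remark in the following proof). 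The genuine gap is exactly the step you yourself flag and then dismiss as ``a direct computation'': the claim that every primitive (resp.\ $\xi$-skew-primitive) $f\in\mathcal O(\GL(P))$ automatically satisfies $f'=c(g-1)$ with $c$ the $T$-coefficient of $\bar f$ — for $M=\1$, that $\mathrm{Prim}(\mathcal O(G))\subseteq\ker d$. This is not a formal coalgebra fact and it is false for other group schemes in $\Ver_4^+$: in $\mathcal O(\mathbb G'_a)=k[T,T']$ the element $T$ is primitive but not $d$-closed and supports no self-extension of $\1$ in the compatible category, and in $\mathcal O(M_1)$ the skew-primitive $Y$ is not $d$-closed (there the extension exists only because $c\neq 0$ and the underlying object is $P$). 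So without this containment your bijection statement ``extensions correspond to all $f$ with $\epsilon(f)=0$ satisfying (skew-)primitivity'' is unproved; and for $\mathcal O(\GL(P))$ verifying it from the coproducts $\Delta(A)=A\otimes A+A'\otimes B$, $\Delta(B)=A\otimes B+B\otimes A+B'\otimes B$ is essentially of the same depth as determining the (skew-)primitives outright, which is the content of the paper's \emph{next} proposition — so it cannot be waved through as routine without risking circularity.

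Two ways to repair this. Either actually prove the containment (e.g.\ show $\mathrm{Prim}(\mathcal O(G))\cap\im d=0$ by a direct argument, before the primitives are classified), or — closer to how the paper uses the statement — interpret ``primitive elements'' categorically, i.e.\ as morphisms $\1\to\mathcal O(G)$ landing in the primitives, which are automatically $d$-closed and restrict trivially to $\pi_1$; with that reading your formal argument is complete as written and no structure-specific input about $\GL(P)$ is needed at all. The paper itself sidesteps the issue by a one-line citation, so making the $\Ver_4^+$- and $\pi_1$-compatibility conditions explicit, as you do, is valuable — but the proof is only finished once the flagged step is either proved or removed by the categorical reading.
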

\begin{proof}
    See Proposition 1.9.12, \cite{etingof_tensor_2015}.
\end{proof}

Since $\Ext(\1, \1) \cong \Ext(\xi, \xi)$ and $\Ext(\1, \xi) \cong \Ext(\xi, \1)$ via tensoring an extension by $\xi$, it suffices to compute primitive elements and skew-primitive elements $f \in \mcal{O}(G)$ such that $\Delta(f) = 1 \otimes f + f \otimes \xi$.

\begin{proposition}
    $\Ext(\1, \1)$ and $\Ext(\xi, \xi)$ are infinite-dimensional, while $\Ext(\1, \xi)$ and $\Ext(\xi, \1)$ are 1-dimensional.
\end{proposition}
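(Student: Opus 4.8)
By the preceding Corollary it suffices to prove that $\Ext(\1,\1)$ is infinite-dimensional and that $\Ext(\1,\xi)$ is $1$-dimensional, as tensoring by $\xi$ then gives $\Ext(\xi,\xi)\cong\Ext(\1,\1)$ and $\Ext(\xi,\1)\cong\Ext(\1,\xi)$. By the preceding Proposition I work inside $\mathcal{O}\assign\mathcal{O}(\GL(P))$: I identify $\Ext(\1,\1)$ with the space of primitive elements of $\mathcal O$, and $\Ext(\1,\xi)$ with the space of $f\in\mathcal O$ satisfying $\Delta(f)=1\otimes f+f\otimes\xi$ and $\varepsilon(f)=0$, taken modulo the line $k(1-\xi)$ of split extensions. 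Reading off multiplication of the matrices $\begin{pmatrix}a&a'\\b&a+b'\end{pmatrix}$ one has $\mathcal O=k[A^{\pm1},B]$ with $dA=A'$, $dB=B'$, $\Delta(A)=A\otimes A+A'\otimes B$, $\Delta(B)=A\otimes B+B\otimes A+B'\otimes B$, $\varepsilon(A)=1$, $\varepsilon(B)=0$, and $\xi=1+(BA^{-1})'$.

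\emph{Step 1: $\Ext(\1,\1)$ is infinite-dimensional.} The plan is to produce a homomorphism of group schemes $\phi\colon\GL(P)\to\mathbb{G}_a$ by $\phi(a,b)\assign(ba^{-1})^2+(ba^{-1})'$; this lands in $\ker d$, so it does map to $\mathbb{G}_a$. To check $\phi$ is a homomorphism it is enough, since $M_1=\{(1,c)\}$ and $\mathbb{G}'_m=\{(a,0)\}$ generate $\GL(P)$ (by the scheme isomorphism $M_1\times H_1\cong\GL(P)$ with $H_1=\mathbb{G}'_m$), to verify additivity on products of elements of these two subgroups; this is a short characteristic-$2$ computation, the one nontrivial case being $\phi(c*d)=\phi(c)+\phi(d)$ on $M_1$ (where $\phi$ reads $c\mapsto c^2+c'$), in which the cross-terms cancel. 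Pulling back the coordinate of $\mathbb{G}_a$ gives the primitive element $p\assign(BA^{-1})^2+(BA^{-1})'\in\mathcal O$, which satisfies $dp=0$. Hence, by the freshman's dream (and because $dp=0$ kills the cross-term in $\Delta(p^{2})$), every power $p^{2^i}$, $i\ge0$, is again primitive; writing $X\assign BA^{-1}$ these are $X^2+X',\,X^4,\,X^8,\dots$, which are nonzero and linearly independent. So $\Ext(\1,\1)$, and with it $\Ext(\xi,\xi)$, is infinite-dimensional.

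\emph{Step 2: $\Ext(\1,\xi)$ is $1$-dimensional.} Nonvanishing follows from the nonsplit extension $0\to\chi\to T_4\to\xi\chi\to0$ exhibited above: tensoring with $\chi^{-1}$ gives a nonsplit extension of $\1$ by $\xi$. For the upper bound I would classify all $f\in\mathcal O$ with $\Delta(f)=1\otimes f+f\otimes\xi$, $\varepsilon(f)=0$. Since $\xi$ has degree $0$, this equation is homogeneous for the $\mathbb Z$-grading by degree, and since $\1$ and $\xi$ lie in the degree-$0$ block only the degree-$0$ part of $\mathcal O$ is relevant, which turns the classification into a finite-dimensional linear problem. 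Expanding such an $f$ in the degree-$0$ monomials in $A^{\pm1},A',B,B'$ and imposing the relation, one finds the solution space is $2$-dimensional, spanned by $1-\xi$ and one further element; modding out $k(1-\xi)$ yields $\dim\Ext(\1,\xi)=\dim\Ext(\xi,\1)=1$.

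\emph{Main obstacle.} Two points need genuine work: verifying that the explicit $\phi$ above is honestly a group homomorphism (the characteristic-$2$ cancellations are essential, and a careless variant of the formula fails to be additive), and — the more substantial one — the skew-primitive classification in Step 2. Unlike Step 1, where a single primitive and its Frobenius powers already suffice, here one must rule out any further skew-primitives, so the argument has to use the degree grading decisively in order to reduce to finitely many coefficients.
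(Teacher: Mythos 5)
Your Step 1 is correct, and it is a genuinely different construction from the paper's: the paper obtains infinitely many self-extensions of $\1$ from additivity of the fourth-power map, i.e.\ the primitives $(BA^{-1})^{4\cdot 2^k}$, whereas you use the Artin--Schreier-type character $c\mapsto c^2+c'$ of $M_1$. Your stated reduction (``check additivity on products of elements of the two subgroups'') is a little too weak as phrased: writing $hm=\widetilde m\,\widetilde h$ for the scheme decomposition, what is really needed beyond additivity on $M_1$, triviality on $H_1$ and the mixed products is that $\phi(\widetilde m)=\phi(m)$; but this does hold (with $\widetilde c=c+c'a^{-1}a'+a'ca^{-1}c$ one computes $\widetilde c^{\,2}=c^2$ and $d\widetilde c=c'$), so $(BA^{-1})^2+(BA^{-1})'$ is indeed primitive, lies in $\ker d$, vanishes on the image of $\pi_1$, and its iterated squares give an infinite independent family. (In fact this element appears to be a primitive outside the span recorded in the paper's own proof; that does not affect the proposition, since only infinite-dimensionality is needed.)

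Step 2, however, has a genuine gap, exactly at the point you yourself flag. The reduction to degree $0$ is legitimate (the coproduct is homogeneous for the grading with $\deg A=\deg B=\deg A'=\deg B'=1$), but it does \emph{not} produce a finite-dimensional linear problem: the degree-$0$ component of $\mcal{O}(\GL(P))$ contains $(BA^{-1})^n$ for all $n$, and indeed the entire infinite family of primitives from Step 1 lives in degree $0$. So ``expanding $f$ in the degree-$0$ monomials and imposing the relation'' is an infinite linear system, and the assertion that its solution space is $2$-dimensional is precisely the content that has to be proved --- the primitive case shows such degree-$0$ systems can have infinite-dimensional solution spaces, so no soft argument of this kind can close the bound. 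The paper does the required work differently: using the scheme isomorphism $\GL(P)\cong M_1\times H_1$ it reduces to classifying primitives in $\mcal{O}(H_1)=k[X^{\pm 1},X']$ and (skew-)primitives in $\mcal{O}(M_1)=k[Y,Y']$, and then checks by direct computation which pairs of restrictions glue to an actual $\GL(P)$-representation; the surviving skew-primitives are $A^{-1}A'$ together with the coboundary $(BA^{-1})'=1+\xi$, whence $\dim\Ext(\1,\xi)=\dim\Ext(\xi,\1)=1$. To complete your Step 2 you need an argument of this type (or some other genuine upper bound); the degree grading alone cannot supply it. Your nonvanishing argument via $0\to\chi\to T_4\to\xi\chi\to 0$ is fine.
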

\begin{proof}
Either from direct computation or looking at the $\GL(P)$-action on $\Sym^4(P)$, we have that $A^{-1}A'$ corresponds to an element in $\Ext(\1, \xi)$. Then $A^{-1}A'\xi = A^{-1}A' + A^{-2}B'A'$ is in $\Ext(\xi, \1)$.

For elements in $\Ext(\1, \1)$, we note that the fourth power map is a homomorphism, so $\Delta(A^4) = A^4 \otimes A^4$ and $\Delta(B) = B^4 \otimes A^4 + A^4 \otimes B^4$. Hence, any element of the form $(BA^{-1})^{4 \cdot 2^k}$ is primitive.

We claim that these are the only (skew)-primitive elements up to coboundaries. 

Since as schemes $\GL(P) \cong M_1 \times H_1$, $H_1 := \mathbb G'_m$ via
\begin{equation*}
    \begin{pmatrix}
    a & a' \\ b & a + b' \end{pmatrix} = 
    \begin{pmatrix}
        1 & 0 \\ ba^{-1} & 1 + (ba^{-1})'
    \end{pmatrix}
    \begin{pmatrix}
        a & a' \\ 0 & a
    \end{pmatrix},
\end{equation*}
a representation of $\GL(P)$ is determined by its restriction to $M_1$ and $H_1$. An extension of $\1$ by $\1$ must still be one when restricted to $M_1$ and $H_1$. An extension of $\1$ by $\xi$ restricts to an extension of trivial representations for $H_1$, and an extension of $\1$ by $x \mapsto 1 + x'$ for $M_1$. So it suffices to classify primitive elements in $\mathcal{O}(H_1)$ and both primitive and skew-primitive elements in $\mathcal{O}(M_1)$.

Recall that $\mathcal{O}(H_1) = k[X, X^{-1}, X']$ with $\Delta(X) = X \otimes X$. It is easy to see that the only primitive elements in this Hopf algebra are those in the span of $X^{-1}X'$.

Recall that $\mathcal{O}(M_1) = k[Y, Y']$ with $\Delta(Y) = Y \otimes 1 + (1 + Y') \otimes Y$. Then we can compute $\Delta(Y^n)$ and $\Delta(Y^n Y')$ by using that $Y^{4 \cdot 2^k}$ is primitive and computing $\Delta(Y^n)$ for $0 \le n < 3$, e.g. $\Delta(Y^2) = 1 \otimes Y^2 + Y^2 \otimes 1 + Y' \otimes Y'$. Note the $Y' \otimes Y'$ term appears because the braiding is nontrivial, so
\begin{equation*}
    ((1 + Y') \otimes Y)(Y \otimes 1) = (Y + YY') \otimes Y + Y' \otimes Y'.
\end{equation*}
It's easy to see that nothing in the span of $Y^n, Y^n Y'$ is primitive except $\text{span}(Y^{4 \cdot 2^k})$, and none are skew-primitive except $\text{span}(Y, Y')$. 

Let $V$ be a $\GL(P)$-representation with $0 \to \1 \to V \to \1 \to 0$.
Restricting $V$ to $H_1$, we obtain
\begin{equation*}
    \begin{pmatrix}a & a' \\ 0 & a \end{pmatrix} \mapsto \begin{pmatrix} 1 & \lambda a^{-1}a' \\ 0 & 1 \end{pmatrix}.
\end{equation*}
Restricting $V$ to $M_1$, we obtain
\begin{equation*}
    \begin{pmatrix} 1 & 0 \\ ba^{-1} & 1 + (ba^{-1})' \end{pmatrix} \mapsto \begin{pmatrix} 1 & \sum \lambda_k (ba^{-1})^{4 \cdot 2^k} \\ 0 & 1 \end{pmatrix}.
\end{equation*}
By direct computation, we can now check that 
\begin{equation*}
    \begin{pmatrix} a & a' \\ b & a + b'\end{pmatrix} \mapsto \begin{pmatrix} 1 & \lambda a^{-1}a' +\sum \lambda_k (ba^{-1})^{4 \cdot 2^k}  \\ 0 & 1 \end{pmatrix}
\end{equation*}
is not a $\GL(P)$-representation unless $\lambda = 0$. Hence, the only extensions of $\1$ by itself are the trivial extension and those corresponding to the span of $(BA^{-1})^{4 \cdot 2^k}$ in $\mathcal{O}(\GL(P))$.

Likewise, an extension of $\1$ by $\xi$ has
\begin{equation*}
    \begin{pmatrix}a & a' \\ 0 & a \end{pmatrix} \mapsto \begin{pmatrix} 1 & \lambda a^{-1}a' \\ 0 & 1 \end{pmatrix}
\end{equation*}
while
\begin{equation*}
    \begin{pmatrix} 1 & 0 \\ ba^{-1} & 1 + (ba^{-1})' \end{pmatrix} \mapsto \begin{pmatrix} 1 & \lambda_1 (ba^{-1})' + \lambda_2 ba^{-1} \\ 0 & 1 + (ba^{-1})' \end{pmatrix}.
\end{equation*}
Again, we can check that we only produce a $\GL(P)$ representation when $H_1$ acts by $\begin{pmatrix} 1 & \lambda a^{-1}a' \\ 0 & 1 \end{pmatrix}$ and $\lambda_2 = 0$. However, we can check that $(ba^{-1})' = 1 + \xi$ is a coboundary. So $\Ext(\1, \xi)$ is 1-dimensional and spanned by $A^{-1}A'$.
\end{proof}



We can also see how these irreducible $\GL(P)$ representations restrict to representations of $M_1$ and $H_1$.
\begin{proposition}
    If $n$ is odd, $T_n, \xi T_n$ both restrict to 
$L_n$, the irreducible representation of $H_1$ labeled by $n$. As an $M_1$ representation, $T_n$ restricts to an extension of $L_\xi$ (the nontrivial irreducible representation of $M_1$) by $\1$, while $\xi T_n$ restricts to an extension of $\1$ by $L_\xi$. If $n \equiv 2 \pmod{4}$, $T_n$ restricts to an extension of $L_n$ by itself as a representation of $H_1$, and to a direct sum of $\1$ and $M_\xi$ as a representation of $M_1$. If $n \equiv 0 \pmod{4}$, $\chi^n, \xi \chi^n$ restrict to $L_n$ as representations of $H_1$. As a representation of $M_1$, $\chi^n$ restricts to $\1$, while $\xi \chi^n$ restricts to $M_\xi$.
\end{proposition}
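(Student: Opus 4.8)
Proof proposal.

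\medskip

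The plan is to use the identification $\GL(P)\cong M_1\times H_1$ (as schemes) recorded just above the statement, under which both $H_1=\mathbb G'_m$ and $M_1$ sit inside $\GL(P)$ as genuine subgroups: the assignment $a\mapsto\bigl(\begin{smallmatrix}a&a'\\0&a\end{smallmatrix}\bigr)$ is a homomorphism $\mathbb G'_m\to\GL(P)$ with image $H_1$, and $c\mapsto\bigl(\begin{smallmatrix}1&0\\c&1+c'\end{smallmatrix}\bigr)$ is a homomorphism realizing the operation $c*e=c+e+c'e$ of $M_1$ (using $c''=0$). In the $(a,b)$-coordinates on $\GL(P)(A)$ these inclusions are $a\mapsto(a,0)$ and $c\mapsto(1,c)$, so restricting a $\GL(P)$-module to $H_1$ amounts to setting $b=0$ in the matrix describing the action, and restricting to $M_1$ amounts to setting $a=1$ (hence $a'=0$). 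Every matrix needed is already proved: Proposition~\ref{glp_symnp_matrix} for $T_n=\Sym^n P$, the character $\chi^k\colon(a,b)\mapsto a^{4k}$, and $\xi\colon(a,b)\mapsto 1+a^{-1}b'+a^{-2}ba'$.

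First I would do the restrictions to $H_1$ (set $b=0$). The odd-$n$ matrix becomes $\bigl(\begin{smallmatrix}a^{n}&a^{n-1}a'\\ \tfrac{n-1}{2}\,a^{n-1}a'&a^{n}\end{smallmatrix}\bigr)$: for $n\equiv1\pmod{4}$ the coefficient $\tfrac{n-1}{2}$ is even and this is literally $L_n$, while for $n\equiv3\pmod{4}$ it is odd, and the $d$-equivariant change of basis $u=x^n+x^{n-1}x'$, $v=x^{n-1}x'$ (valid since $du=v$, matching $dx=x'$ on $P$) conjugates it into the normalized form of $L_n$ from the proposition on $\mathbb G'_m$. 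Since $\xi(a,0)=1$ we also get $\xi T_n|_{H_1}=L_n$. For $n\equiv2\pmod{4}$ the even-$n$ matrix with $b=0$ is $\bigl(\begin{smallmatrix}a^{n}&a^{n-1}a'\\ a^{n-1}a'&a^{n}\end{smallmatrix}\bigr)$ (now $\tfrac{n}{2}$ is odd); the line $k(x^n+x^{n-1}x')$ is $H_1$-stable with character $a^n+a^{n-1}a'=L_n$ and the quotient is again $L_n$, the extension being non-split because this is the only $H_1$-stable line (equivalently the off-diagonal entry $a^{n-1}a'$ does not vanish identically). Finally $\chi^k|_{H_1}\colon a\mapsto a^{4k}$ is $L_{4k}$, and $\xi\chi^k|_{H_1}=\chi^k|_{H_1}$ since $\xi|_{H_1}=1$.

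Next the restrictions to $M_1$ (set $a=1$, $a'=0$). The odd-$n$ matrix collapses to $\bigl(\begin{smallmatrix}1&0\\ c&1+c'\end{smallmatrix}\bigr)$ on $\Sym^n P\cong P$; the $d$-closed line $k\,x^{n-1}x'$ carries the $M_1$-action $c\mapsto 1+c'=L_\xi$, the quotient is trivial, and the extension is non-split because $\Sym^n P\cong P$ is indecomposable (equivalently the $(2,1)$-entry $c$ does not vanish) — so $T_n|_{M_1}$ has submodule $L_\xi$ and quotient $\1$. Since $\xi|_{M_1}\colon c\mapsto 1+c'$ is $L_\xi$ and $L_\xi^{\otimes 2}=\1$ (as $(1+c')^2=1+(c')^2=1$), tensoring this sequence by $\xi$ produces the non-split extension with submodule $\1$ and quotient $L_\xi$ realizing $\xi T_n|_{M_1}$. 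For $n\equiv2\pmod{4}$ the even-$n$ matrix with $a=1$ is the diagonal matrix $\bigl(\begin{smallmatrix}1&0\\ 0&1+c'\end{smallmatrix}\bigr)=\1\oplus L_\xi$ (a genuine direct sum, since $\Sym^n P\cong\1\oplus\1$ as an object for $n$ even). And $\chi^k|_{M_1}\colon c\mapsto1$ is $\1$, while $\xi\chi^k|_{M_1}=\xi|_{M_1}=L_\xi$.

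There is no real obstacle here beyond careful bookkeeping: once the two inclusions are made explicit, every claim is a substitution into already-established formulas together with tracking parities of binomial coefficients modulo $2$. The step most deserving of attention is the $n\equiv3\pmod{4}$ restriction to $H_1$, where one must exhibit the $d$-equivariant change of basis that brings the computed matrix into the normalized form of $L_n$ used elsewhere; and in the extension cases one should not skip the (one-line) verification of non-splitness, which always reduces either to the non-vanishing of a single off-diagonal entry or to indecomposability of $\Sym^n P\cong P$ for $n$ odd.
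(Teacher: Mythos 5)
Your proposal is correct and follows the paper's own approach exactly: the paper's proof is simply the direct computation obtained by setting $b=0$ for the restriction to $H_1$ and $a=1$ (hence $a'=0$) for the restriction to $M_1$ in the matrices of Proposition \ref{glp_symnp_matrix} and in the formulas for $\chi$ and $\xi$. You merely carry out in full the substitutions, the change of basis for $n\equiv 3\pmod 4$, and the non-splitness checks that the paper leaves implicit.
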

\begin{proof}
Via direct computation – restricting to $H_1$ sets $b = 0$, and restricting to $M_1$ sets $a = 1$.
\end{proof}

\begin{remark}
    We can also consider the induced modules $\Hom_{H_1}(V, \mcal{O}(G))$ for $V$ an $H_1$-representation and $\Hom_{M_1}(W, \mcal{O}(G))$ for $W$ an $M_1$-representation. The isomorphism of schemes $G \cong M_1 \times H_1$ implies $\mcal{O}(G) \cong \mcal{O}(M_1) \otimes \mcal{O}(H_1)$, so
\begin{equation*}
    \Hom_{H_1}(V, \mcal{O}(G)) = V^* \otimes \mcal{O}(M_1), \Hom_{M_1}(W, \mcal{O}(G)) = W^* \otimes \mcal{O}(H_1).
\end{equation*}
Recall that $L_\xi$ is the irreducible $M_1$-representation defined by $a \mapsto 1 + a'$. The invariant functions we get in $\Hom_{M_1}(L_\xi, \mcal{O}(G))$ generate the $G$-representation $\xi$; indeed, $\xi(a, b) = 1 + (ba^{-1})'$. Recall that $L_n$ is the irreducible $H_1$-representation corresponding to the integer $n$. We can check that the $H_1$-invariant functions $L_n \to \mcal{O}(G)$ corresponding to $L_n^* \otimes 1$ generate either $T_n$ or $\chi^{n/4}$ depending on whether $n \equiv 0 \pmod{4}$ or not.
\end{remark}

\subsubsection{Explicit structure of representations of $\GL(\1+P)$ and their tensor products}

By Proposition \ref{glm+np_irrep}, we know that finite-dimensional irreducible representations of $\GL(\1 + P)$ correspond to pairs $(n, T)$ where $n \in \mathbb{Z}$ and $T$ is an irreducible $\GL(P)$-module. In this section, we give the weight decomposition for all these irreducibles and use this decomposition to compute tensor products of the irreducible $\GL(\1 + P)$-modules.

Recall from Corollary \ref{glm+np_lu} that there is a subgroup $B_{1, 1} \in \GL(\1 + P)$ of block upper triangular matrices of the form
\begin{equation*}
    \begin{pmatrix}
        \1 \otimes \1^* & \1 \otimes P^* \\
        0 & P \otimes P^*
    \end{pmatrix}.
\end{equation*}
The irreducible representations of $B_{1, 1}$ are also in bijection with pairs $(n, T)$ where $n \in \mathbb{Z}$, $T$ an irreducible $\GL(P)$-module. Thus given an irreducible $B_{1, 1}$-representation $V$ with highest weight $(n, T)$, we can construct the corresponding $\GL(\1 + P)$ representation with highest weight $(n, T)$ by finding a $B_{1, 1}$-invariant function $f: V \to \mcal{O}(\GL(\1 + P))$, and then looking at the $\GL(\1+P)$-representation generated by $f$. To find such an invariant function, we identify $\Hom_{B_{1, 1}}(V, \mcal{O}(\GL(\1 + P)))$ with $V^{*} \otimes \Sym (P)$. We then analyze the $\GL(\1 + P)$-representation using Sage to determine its $\GL(\1) \times \GL(P)$ weight decomposition.

Let $L (n, T)$ be the irreducible representation of $\GL (1 + P)$ corresponding to $n$ an
integer (i.e. a representation of $\GL_1$) and $T$ an irreducible representation of
$\GL (P)$. Because $L (2, \1)$, $L(0, \chi)$ are one-dimensional, it suffices to describe $L(n, T)$ when $n = 0, 1$ and $0 \le \deg T < 4$. 

Write elements of $\GL (\1 + P) (A)$ in the form
\begin{equation*} \begin{pmatrix}
     a & c & c'\\
     b' & e & e'\\
     b & f & e + f'
   \end{pmatrix} \end{equation*}
where the first row/column corresponds to $\1$ and the second and third
rows/columns correspond to $P$. In particular $a' = 0$ and $a, e$ are
invertible.

Let $A, B, C, E, F$ be the matrix functions taking an element to $a, b, c, e,
f$ respectively, so $\mcal{O}(G) = k [A, B, C, E, F, A^{- 1}, E^{- 1}] / (d A = 0)$.

\begin{proposition}
Explicit invariants for some $B_{1, 1}$-representations are as follows:
\begin{center}
\bgroup \small
\begin{tabular}{|c|c|}
\hline
    Highest Weight & Invariant \\
    \hline
    $(n, \1)$ & $A^n$ \\
    \hline
    $(0, \chi^n)$ & $E^{4n}$ \\
    \hline
    $(0, T_i), 1 \le i \le 3$ & $A^{-1} E^i B'$ \\
    \hline
    $(0, \xi \chi)$ & \makecell{$E^4 + E^3F' + E^2FE' + $ \\ $A^{-1}(CE^3B' + BE^3C' + E^2FB'C' +$ \\ $ BCE^2E' + CE^2B'F' + EB'C'D'F')$} \\
    \hline
    $(1, \xi\chi)$ & \makecell{$A(E^4 + E^3F' + E^2FE') + $ \\ $CE^3B' + BE^3C' + E^2FB'C' +$ \\ $ BCE^2E' + CE^2B'F' + EB'C'D'F'$} \\
    \hline
    $(1, T_i), 1 \le i \le 3$ & $E^i B'$ \\
    \hline
\end{tabular}
\egroup
\end{center}
\end{proposition}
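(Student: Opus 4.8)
The plan is to verify each entry of the table by computing the right regular coaction of $B_{1,1}$ on the displayed function and checking that it transforms as the corresponding irreducible $B_{1,1}$-module prescribes. Recall from the proof of Proposition \ref{glm+np_irrep} (via the Gauss decomposition of Corollary \ref{glm+np_lu}, here with $M = (\mathbb{G}'_a)^{1} = \mathbb{G}'_a$) that $\mathcal{O}(G) \cong \mathcal{O}(M)\otimes\mathcal{O}(B_{1,1})$, so $\Hom_{B_{1,1}}(V,\mathcal{O}(G)) \cong V^*\otimes\mathcal{O}(M) = V^*\otimes\Sym P$ for every irreducible $B_{1,1}$-module $V$; thus it suffices to exhibit one nonzero element of this space, and the table records (the image of a highest-weight generator of) such an element. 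Concretely, one checks that each listed $f$ is nonzero, lies in the correct weight space for the right $B_{1,1}$-action, and is semi-invariant under the unipotent radical $N_{1,1}\subset B_{1,1}$ (the $\1\otimes P^*$ block); when $T$ is the object $P$ (i.e.\ $T = T_1$ or $T_3$) or $T = T_2$ (which is $\1\oplus\1$ as an object) the module is two-dimensional and one tracks $f$ together with $df$ and its $\rho$-descendants.

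First I would write out the coaction $\rho\colon\mathcal{O}(G)\to\mathcal{O}(G)\otimes\mathcal{O}(B_{1,1})$ dual to right translation on the generators $A,B,C,E,F$. Using the matrix forms of $\GL(\1+P)(A)$ and of $B_{1,1}(A)$ in \eqref{b_mn_format}, this is just the entrywise block matrix product, giving $\rho(A) = A\otimes\alpha$, $\rho(B) = B\otimes\alpha$, $\rho(C) = A\otimes\gamma + C\otimes\varepsilon + C'\otimes\phi$, $\rho(E) = B'\otimes\gamma + E\otimes\varepsilon + E'\otimes\phi$, and $\rho(F) = B\otimes\gamma + F\otimes\varepsilon + (E+F')\otimes\phi$, where $\alpha,\varepsilon$ are the Levi coordinates of the $\GL(\1)$- and $\GL(P)$-factors and $\gamma$ (with $\phi$) the remaining coordinates on $B_{1,1}$. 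Everything after this is an application of $\rho$ being an algebra map together with the $\Ver_4^+$ identities $xy = yx + x'y'$, $(x')^2 = 0$, and $\ker d$ central (whence, as in the $\GL(P)$-computation of $\chi$, fourth powers are grouplike).

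The one-dimensional rows are then short: $\rho(A^n) = A^n\otimes\alpha^n$ with $\alpha^n$ the character of $B_{1,1}$ attached to $(n,\1)$; for $(0,\chi^n)$ one checks that all $B',E'$-contributions to $\rho(E)^4$ square to zero and the cross-terms die under $d$, so $\rho(E^{4n}) = E^{4n}\otimes\varepsilon^{4n}$ with $\varepsilon^4 = \chi$; and for $(n,T_i)$, $E^i$ plays the role of the top matrix entry $a^i$ of $\Sym^i P$ in Proposition \ref{glp_symnp_matrix} while $B'$ carries the $\GL(\1)$-direction. The genuinely hard rows are $(0,\xi\chi)$ and $(1,\xi\chi)$: the leading term $E^4 + E^3F' + E^2FE' = E^4(1 + E^{-1}F' + E^{-2}FE')$ is the pullback of the $\xi\chi$-character of the $\GL(P)$-block and would be the invariant if $B_{1,1}$ coincided with its Levi, but it is \emph{not} semi-invariant for the full group --- the $\gamma$-direction contributes extra terms --- so one must add exactly the correction terms in $B,C$ (coordinates on the $\1\otimes P^*$ and $P\otimes\1^*$ blocks) recorded in the table in order to cancel them. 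Checking that this correction works is a long but purely mechanical computation in $\mathcal{O}(G)$, and, as indicated in the text, I would carry it out with the Sage code rather than by hand; this is the step I expect to be the main obstacle.
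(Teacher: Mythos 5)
Your proposal is correct and takes essentially the same route as the paper: the paper also obtains these functions by identifying $\Hom_{B_{1,1}}(V,\mathcal{O}(\GL(\1+P)))$ with $V^*\otimes\Sym P$ via the decomposition of Corollary \ref{glm+np_lu} and then verifies the required transformation behaviour by direct, Sage-assisted computation of the right coaction on $A,B,C,E,F$, which is exactly your plan (including tracking $f$ and $df$ for the two-dimensional weights $T_1,T_2,T_3$).
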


We can then check that $L(1, \xi \chi)$ is one-dimensional. Therefore, computing the $\GL(\1+P)$-representations with highest weights in the above list is sufficient; all other representations are tensor products of those with $L(2n, \1)$, $L(0, \chi^n)$, and $L(1, \chi \xi)$.

The weight decompositions for $\GL(\1 + P)$ representations with highest weight $(n, T)$, $0 < \deg T \le 4$, $0 \le n < 2$, are listed below. Extensions are denoted by $\times$.
\begin{itemize}
  \item $L (2, \1)$, $L (0,\chi)$, and $L (1, \xi \chi)$ are one-dimensional.
  
  \item $L (1, \1)$ is 3-dimensional: \begin{tabular}{|c|}
    \hline
    $(1, \1)$\\
    \hline
    $(0, T_1)$\\
    \hline
  \end{tabular}
  
  \item $L (0, T_1)$ is $8$-dimensional: \begin{tabular}{|c|}
    \hline
    (0,$T_1$)\\
    \hline
    $(- 1, T_2) \times (- 1, T_2)$\\
    \hline
    $(- 2, \xi T_3)$\\
    \hline
  \end{tabular}
  
  \item $L (0, T_2)$ is 4-dimensional: \begin{tabular}{|c|}
    \hline
    $(0, T_2)$\\
    \hline
    $(- 1, \xi T_3)$\\
    \hline
  \end{tabular}
  
  \item $L (0, T_3)$ is 8-dimensional: \begin{tabular}{|c|}
    \hline
    $(0, T_3)$\\
    \hline
    $(- 1, \xi \chi) \times (- 1, \chi) \times (- 1, \chi) \times (- 1,
    \xi \chi)$\\
    \hline
    $(- 2, \xi \chi T_1)$\\
    \hline
  \end{tabular}
  
  \item $L (1, T_1)$, $L (1, T_2)$ are 4-dimensional: \begin{tabular}{|c|}
    \hline
    $(1, T_i)$\\
    \hline
    $(0, T_{i + 1})$\\
    \hline
  \end{tabular}
  
  \item $L (1, T_3)$ is 3-dimensional: \begin{tabular}{|c|}
    \hline
    $(1, T_3)$\\
    \hline
    $(0, \xi \chi)$\\
    \hline
  \end{tabular}
  
  \item $L (0, \xi \chi) \cong L(1, T_3)^* \otimes L(\chi)^{\otimes 2}$ is 3-dimensional: \begin{tabular}{|c|}
    \hline
    $(0, \xi \chi)$\\
    \hline
    $(- 1, \xi \chi T_1)$\\
    \hline
  \end{tabular}
  
  \item $L (0, \xi \chi^{- 1} T_3) \cong L(1, \1)^*$ is 3-dimensional: \begin{tabular}{|c|}
    \hline
    $(0, \xi \chi^{- 1} T_3)$\\
    \hline
    $(- 1, \1)$\\
    \hline
  \end{tabular}
  
  \item $L (0, \xi \chi^{- 1} T_1) \cong L(1, T_2)^*$ is 4 dimensional: \begin{tabular}{|c|}
    \hline
    $(0, \xi \chi^{- 1} T_1)$\\
    \hline
    $(- 1, \chi^{- 1} T_2)$\\
    \hline
  \end{tabular}
  
  \item $L (1, \xi T_1) \cong L (1 \comma \xi \chi) \otimes L (\chi^{- 1})
  \otimes L (0, T_1)$ is 8-dimensional: \begin{tabular}{|c|}
    \hline
    (1,$\xi T_1$)\\
    \hline
    $(0, T_2) \times (0, T_2)$\\
    \hline
    $(- 1, T_3)$\\
    \hline
  \end{tabular}
  
  \item $L (1, \xi T_3) \cong L (1 \comma \xi \chi) \otimes L (\chi^{-
  1}) \otimes L (0, T_3)$ is 8-dimensional: \begin{tabular}{|c|}
    \hline
    $(1, \xi T_3)$\\
    \hline
    $(0, \chi) \times (0, \xi \chi) \times (0, \xi \chi) \times (0, \xi)$\\
    \hline
    $(- 1, T_1)$\\
    \hline
  \end{tabular}
\end{itemize}

We also compute weight decompositions for tensor products of the above irreducible representations in the table below. These were calculated using
Sage.

\begin{table}
\centering
\bgroup \small
\begin{tabular}{|c|c|c|c|c|c|c|c|}
\hline
   & $(0, T_1)$ & $(0, T_2)$ & $(0, T_3)$ & $(1, 1)$ & $(1, T_1)$ &
  $(1, T_2)$ & $(1, T_3)$\\
  \hline
  $(0, T_1)$ & \makecell{$2 (0, T_2) +$
  \\
  $2 (- 1, \xi T_3) +$
  \\
  $4 (- 1, T_3) +$
  \\
  $4 (- 2, \xi \chi) +$
  \\
  $8 (- 2, \chi) +$
  \\
  $2 (- 3, \chi T_1)$} & \  & \  & \  & \  & \  & \ \\
  \hline
  $(0, T_2)$ & \makecell{$(0, \xi T_3) +$
  \\
  $(0, T_3) +$
  \\
  $4 (- 1, \xi \chi) +$
  \\
  $3 (- 1, \chi) +$
  \\
  $2 (- 2, \xi \chi T_1)$} & \makecell{$2 (0, \xi \chi) +$
  \\
  $2 (0 \comma \chi) +$
  \\
  $2 (- 1, \chi T_1)$} & \  & \  & \  & \  & \ \\
  \hline
  $(0, T_3)$ & \makecell{$2 (0, \xi \chi) +$
  \\
  $2 (0, \chi) +$
  \\
  $2 (- 1, \xi \chi T_1) +$
  \\
  $4 (- 1 \comma \chi T_1) +$
  \\
  $4 (- 2, \chi T_2) +$
  \\
  $2 (- 3, \chi T_3) +$
  \\
  $2 (- 4, \chi^2)$} & \makecell{$(0, \xi \chi T_1) +$
  \\
  $(0, \chi T_1) +$
  \\
  $3 (- 1, \chi T_2) +$
  \\
  $2 (- 2, \xi \chi T_3) +$
  \\
  $2 (- 3, \xi \chi^2)$} & \makecell{$2 (0, \chi T_2) +$
  \\
  $2 (- 1, \xi \chi T_3) +$
  \\
  $4 (- 1, \chi T_3) +$
  \\
  $4 (- 2, \xi \chi^2) +$
  \\
  $8 (- 2, \chi^2) +$
  \\
  $2 (- 3, \chi^2 T_1)$} & \  & \  & \  & \ \\
  \hline
  $(1, 1)$ & \makecell{$(1, T_1) +$
  \\
  $3 (0, T_2) +$
  \\
  $2 (- 1, T_3) +$
  \\
  $2 (- 2, \chi)$} & \makecell{$(1, T_2) +$
  \\
  $2 (0, \xi T_3) +$
  \\
  $2 (- 1, \xi \chi)$} & \makecell{$(1, T_3) +$
  \\
  $3 (0, \xi \chi) +$
  \\
  $4 (0, \chi) +$
  \\
  $2 (- 1, \chi T_1)$} & \makecell{$(2, 1) +$
  \\
  $2 (1, T_1)$} & \  & \  & \ \\
  \hline
  $(1, T_1)$ & \makecell{$2 (1, T_2) +$
  \\
  $3 (0, \xi T_3) +$
  \\
  $(0, T_3) +$
  \\
  $4 (- 1, \xi \chi) +$
  \\
  $(- 1, \chi)$} & \makecell{$(1, \xi T_3) +$
  \\
  $(1, T_3) +$
  \\
  $(0, \xi \chi) +$
  \\
  $2 (0, \chi)$} & \makecell{$2 (1, \xi \chi) +$
  \\
  $2 (1, \chi) +$
  \\
  $3 (0, \xi \chi T_1) +$
  \\
  $(0, \chi T_1) +$
  \\
  $(- 1, \chi T_2)$} & \makecell{$(2, T_1) +$
  \\
  $(1, T_2)$} & \makecell{$2 (2, T_2) +$
  \\
  $2 (1, T_3) +$
  \\
  $2 (0, \chi)$} & \  & \ \\
  \hline
  $(1, T_2)$ & \makecell{$(0, \xi T_3) +$
  \\
  $(0, T_3) +$
  \\
  $3 (- 1, \xi \chi) +$
  \\
  $4 (- 1, \chi) +$
  \\
  $2 (- 2, \chi T_1)$} & \makecell{$2 (1, \xi \chi) +$
  \\
  $2 (1, \chi) +$
  \\
  $2 (0, \xi \chi T_1)$} & \makecell{$(1, \xi \chi T_1) +$
  \\
  $(1, \chi T_1) +$
  \\
  $3 (0, \chi T_2) +$
  \\
  $2 (- 1, \chi T_3) +$
  \\
  $2 (- 2, \chi^2)$} & \makecell{$(2, T_2) +$
  \\
  $2 (1, T_3) +$
  \\
  $2 (0, \chi)$} & \makecell{$(2, \xi T_3) +$
  \\
  $(2, T_3) +$
  \\
  $2 (1, \xi \chi) +$
  \\
  $(1, \chi)$} & \makecell{$2 (2, \xi \chi) +$
  \\
  $2 (2, \chi) +$
  \\
  $2 (1, \chi T_1)$} & \ \\
  \hline
  $(1, T_3)$ & \makecell{$2 (0, \xi \chi) +$
  \\
  $2 (0, \chi) +$
  \\
  $3 (- 1, \xi \chi T_1) +$
  \\
  $(- 2, T_2)$} & \makecell{$(1, \xi \chi T_1) +$
  \\
  $(1, \chi T_1)$} & \makecell{$2 (1, \chi T_2) +$
  \\
  $3 (0, \xi \chi T_3) +$
  \\
  $4 (- 1, \xi \chi^2) +$
  \\
  $(- 1, \chi^2)$} & \makecell{$(2, T_3) +$
  \\
  $(1, \xi \chi)$} & \makecell{$2 (2, \xi \chi) +$
  \\
  $2 (2, \chi) +$
  \\
  $(1, \chi T_1)$} & \makecell{$(2, \xi \chi T_1) +$
  \\
  $(2, \chi T_1)$} & \makecell{$2 (2, \chi T_2) +$
  \\
  $(0, \chi^2)$}\\
  \hline
\end{tabular}
\caption{Weight decompositions for tensor products of irreducible representation of $\GL(1+P)$}
\egroup
\end{table}

\subsubsection{Representations of $\GL(nP)$} \label{glnp_reps}

In this section, we describe the irreducible representations of $\GL(nP)$. 

If we write elements of $\GL(nP)(A)$ as $n \times n$ block matrices
\begin{equation*}
    \begin{pmatrix}
        X_{11} & \cdots & X_{1n} \\
        \vdots & \ddots & \vdots \\
        X_{n1} & \cdots & X_{nn}
    \end{pmatrix}
\end{equation*}
where $X_{ij} \in \End_A(P \otimes A)$, we see that $\GL(nP)$ has a subgroup $B$ consisting of the upper triangular block matrices of this form with the $X_{ii}$ invertible. The irreducible representations of this subgroup $B$ are $n$-tuples of irreducible $\GL(P)$-modules, and per the discussion in Section \ref{highest_weights}, every irreducible representation of $\GL(nP)$ has a unique $B$-highest weight, which is an irreducible $B$-module $V$. The irreducible representations of $\GL(nP)$ correspond to the irreducible $B$-modules $V$ with $\Hom_{B}(V, \mcal{O}(\GL(nP))) \ne 0$.

\begin{lemma}
    If the $n$-tuple of $\GL(P)$-irreps $(L_1, \dots, L_n)$ does not satisfy $\deg L_1 \ge \cdots \ge \deg L_n$, then it cannot be the highest weight of an irreducible $\GL(nP)$-module.
\end{lemma}
\begin{proof}
    We have $\GL(n) \subset \GL(nP)$ via sending an invertible matrix $M \in \GL(n)(A)$ to $\begin{pmatrix} M & 0 \\ 0 & M \end{pmatrix}$. A $\GL(nP)$-module with highest weight $(L_1, \dots, L_n)$ restricts to a $\GL(n)$-module (not necessarily irreducible) with highest weight $\deg L_1, \dots, \deg L_n$, which cannot be finite-dimensional unless $\deg L_1 \ge \cdots \ge \deg L_n$.
\end{proof}

We claim that this is also a sufficient condition.

\begin{lemma}
    The $\GL(nP)$-module with highest weight $(L_1, \dots, L_n)$, $\deg L_1 \ge \cdots \ge \deg L_n$, is finite-dimensional.
\end{lemma}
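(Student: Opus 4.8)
The idea is to realize $L(L_1,\dots,L_n)$ as a composition factor of an explicit \emph{finite-dimensional} $\GL(nP)$-module. The engine is part~(3) of the highest-weight theorem, applied with the cocharacter $f(t)=\diag(t^{m_1}I,\dots,t^{m_n}I)$, $m_1>\dots>m_n$, whose centralizer is the block-diagonal Levi $\GL(P)^n\subset\GL(nP)$ of the block-upper-triangular Borel: it tells us that the set $\mathcal G$ of $n$-tuples of irreducible $\GL(P)$-modules that occur as highest weights of finite-dimensional irreducible $\GL(nP)$-modules is closed under "$(L_\bullet),(L'_\bullet)\mapsto$ composition factors of the $\GL(P)^n$-module $(L_1\otimes L'_1,\dots,L_n\otimes L'_n)$". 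Since one already knows every member of $\mathcal G$ is dominant ($\deg L_1\ge\dots\ge\deg L_n$), it remains to show the converse, i.e.\ that $\mathcal G$ contains all dominant tuples.

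I would first stock $\mathcal G$ with the "column" tuples $(\underbrace{T_1,\dots,T_1}_{c},\underbrace{\1,\dots,\1}_{n-c})$ for $1\le c\le n$ and the dual tuples $(\underbrace{\1,\dots,\1}_{n-c},\underbrace{T_1^*,\dots,T_1^*}_{c})$, which should be read off from composition factors of $(nP)^{\otimes c}$, resp.\ $\bigl((nP)^*\bigr)^{\otimes c}$ (which are finite-dimensional), after identifying which subquotient of the tensor power carries the highest weight "degree $1$ on the first $c$ blocks". Granting this, an arbitrary dominant tuple $(L_1,\dots,L_n)$ with $d_i:=\deg L_i$ is obtained as follows: decompose $(d_1,\dots,d_n)$ into the columns of its nonnegative part and of its negative part, form the pointwise tensor product over columns of the corresponding members of $\mathcal G$, and observe that in block $i$ this pointwise tensor product is $T_1^{\otimes|d_i|}$ (or $\bigl(T_1^*\bigr)^{\otimes|d_i|}$). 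Since $\Rep\GL(P)$ is generated by the standard representation $T_1$ and its dual, every irreducible $\GL(P)$-module of degree $d_i$ — in particular $L_i$ — is a composition factor of $T_1^{\otimes|d_i|}$ (resp.\ its dual); hence $(L_1,\dots,L_n)$ is a composition factor of the (finite-dimensional) pointwise tensor product, and therefore lies in $\mathcal G$, proving the lemma.

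The real work, and the main obstacle, is stocking $\mathcal G$ with the column tuples: in $\Ver_4^+$ the naive exterior powers behave unexpectedly (for instance $\wedge^2 P\cong\1\oplus\1$, so $\wedge^2(nP)$ already has highest weight of degree $(2,0,\dots)$, not $(1,1,0,\dots)$), so one must instead run a bounded highest-weight computation inside $(nP)^{\otimes c}$, showing that the $\GL(P)^n$-multiplicity of the sought highest weight in its $f$-weight slice is not fully absorbed by the finitely many composition factors of strictly higher $f$-weight — the analogue of the char-$0$ fact that $\Lambda^c$ of the standard module is a summand of its $c$-th tensor power, where in $\Ver_4^+$ the lowering operators carry an extra $\xi$-twist that has to be tracked. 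An alternative, trading this computation for an input from algebraic geometry in $\Ver_4^+$, is an induction on $n$ via the maximal parabolic $Q\subset\GL(nP)$ with Levi $\GL(P)\times\GL((n-1)P)$: by induction $L(L_2,\dots,L_n)$ is finite-dimensional, so the $Q$-module $L_1\boxtimes L(L_2,\dots,L_n)$ (unipotent radical acting trivially) is too, $L(L_1,\dots,L_n)$ embeds into $\ind_Q^{\GL(nP)}\bigl(L_1\boxtimes L(L_2,\dots,L_n)\bigr)$, the dominance $\deg L_1\ge\deg L_2$ makes this nonzero, and finite-dimensionality of this space of sections would follow from properness of the partial flag space $\GL(nP)/Q$ in $\Ver_4^+$.
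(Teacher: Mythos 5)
Your argument has the right global shape (the closure property in part~(3) of the highest-weight theorem does reduce everything to exhibiting a generating set of $\mathcal G$), but the step that carries all the weight --- that the column tuples $(T_1,\dots,T_1,\1,\dots,\1)$ lie in $\mathcal G$ --- is exactly the step you do not prove, and you say so yourself. Neither of your two proposed routes is completed: the multiplicity count inside $(nP)^{\otimes c}$ requires knowing the dimension of the $(1^c,0^{n-c})$-weight space of every irreducible with strictly higher highest weight (which is not available at this point in the development, and your own observation that $\wedge^2 P\cong \1\oplus\1$ shows the char-$0$ shortcut is closed), while the parabolic-induction route needs finite-dimensionality of $\ind_Q^{\GL(nP)}$, i.e.\ properness of $\GL(nP)/Q$ in $\Ver_4^+$, which is an unproven input at least as strong as the lemma itself. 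So as written this is a plan, not a proof. For comparison, the paper avoids the tensor combinatorics entirely: it induces from the closed subgroup $G_*$ with $\mcal O(G_*)=\mcal O(G)/(\im d)$, identifies $G_*\cong\GL(n)\ltimes\mfrak{gl}(n)$, and shows $\Dist(G)$ is a \emph{finitely generated} $\Dist(G_*)$-module because the odd coordinates $C'_{ij},D'_{ij}$ square to zero --- whence $\ind_{G_*}^G(V)$ is finite-dimensional for any finite-dimensional $G_*$-irreducible $V$, and it contains the desired irreducible. That is both more elementary and uniform in the weight.

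There is also a secondary defect in your reduction of a general dominant tuple to columns: your recipe produces in slot $i$ the module $T_1^{\otimes d_i}$ (or its dual), but when $d_i=0$ this is just $\1$, so tuples such as $(\xi,\1,\dots,\1)$ --- which are dominant and do occur as highest weights --- are never reached; more generally $\xi$-twists in degree-zero slots are invisible to your construction. This is repairable (e.g.\ tensor with enough copies of the full positive column $(T_1,\dots,T_1)$ and the full negative column $(T_1^*,\dots,T_1^*)$, using that $\xi$ is a composition factor of $T_1\otimes T_1^*$ and $\1$ is too, so slot $i$ becomes $T_1^{\otimes r_i}\otimes(T_1^*)^{\otimes s_i}$ with $r_i-s_i=d_i$ and $r_i,s_i$ large, which contains every irreducible of degree $d_i$), but it must be said, and it still rests on the unestablished column tuples.
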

\begin{proof}
Let $G = \GL(nP)$ and let $N$ be the nilradical of $\mathcal{O}(G)$; then $N$ is the ideal generated by $\im d$ and $\mathcal{O}(G)/N = \mcal{O}(G_*)$ for $G_*$ the closed subgroup such that $G_*(A) = \left\{ \begin{pmatrix} C & 0 \\ D & C \end{pmatrix}\right\}$, $C \in \GL(n)(A)$, $D \in \mfrak{gl}_n(A)$, so $dC = dD = 0$. Let $(D, C) \in G_*(A)$ denote the element
\begin{equation*}
    \begin{pmatrix}
        I & 0 \\ D & I 
    \end{pmatrix}
    \begin{pmatrix}
        C  & 0 \\ 0 &  C
    \end{pmatrix}.
\end{equation*}
Then multiplication on $G_*$ is given by 
\begin{equation*}
    (D_1, C_1)(D_2, C_2) = (D_1 + C_1 D_2 C_1^{-1}, C_1 C_2),
\end{equation*}
so $G_* = \GL(n) \ltimes \mfrak{gl}(n)$ where $\GL(n)$ acts by conjugation. Therefore, irreducible representations of $G_*$ correspond exactly to representations of $\GL(n)$ with trivial $\mfrak{gl}_n$-action. Consider the irreducible $G_*$ representation $V$ with highest weight $(\lambda_1, \dots, \lambda_n)$. The representation $\ind_{G_*}^G(V)$ contains the irreducible $G$-representation with highest weight $(L_1, \dots, L_n)$, $\deg L_i = \lambda_i$. We prove below in Lemma \ref{dist_g_fg} that $\Dist(G)$ is a finitely generated module over $\Dist(G_*)$, which implies that $\ind_{G_*}^G(V)$ is finite-dimensional. Hence, the irreducible $G$-representation with highest weight $(L_1, \dots, L_n)$ is finite-dimensional as well.

\end{proof}

\begin{lemma}
    \label{dist_g_fg}
    $\Dist(G)$ is finitely generated as a $\Dist(G_*)$-module.
\end{lemma}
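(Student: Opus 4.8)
The plan is to work infinitesimally: show that the formal completion of $G$ at the identity differs from that of $G_*$ only by finitely many square-zero directions coming from $\im d$, and deduce that $\Dist(G)$ is module-finite over $\Dist(G_*)$.

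\textbf{Setup.} Dualizing the Hopf-algebra surjection $\mathcal{O}(G)\twoheadrightarrow\mathcal{O}(G)/N=\mathcal{O}(G_*)$ realizes $\Dist(G_*)$ as the Hopf subalgebra $\{\phi\in\Dist(G):\phi|_N=0\}$ of $\Dist(G)$ (it is the image of the injective Hopf map $\Dist(G_*)\to\Dist(G)$). So it suffices to exhibit a finite-dimensional subobject $W\subseteq\Dist(G)$ with $\Dist(G)=\Dist(G_*)\cdot W$. To locate $W$, I would use the scheme isomorphism $\GL(nP)\cong M_n\times H_n$: letting $C_{ij},D_{ij}$ ($1\le i,j\le n$) be the matrix coordinates of the two factors (with $(\det D_{ij})^{-1}$ adjoined), $\mathcal{O}(G)$ is generated as an algebra by these unprimed coordinates together with the $2n^2$ elements $C'_{ij}=dC_{ij}$, $D'_{ij}=dD_{ij}$, which generate $N$ and which — being in $\ker d$ — are central and square to zero. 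Since a product of two primed generators is a commutator of unprimed ones (e.g. $C'_{ij}C'_{k\ell}=[C_{ij},C_{k\ell}]$), every element of $\mathcal{O}(G)$ is a left $R_0$-combination of the $2^{2n^2}$ squarefree monomials $w_T$ in the primed generators, where $R_0$ is the (non-commutative) subalgebra generated by the unprimed coordinates and $(\det D_{ij})^{-1}$, and $R_0\twoheadrightarrow\mathcal{O}(G_*)$. Dually, I would take $W$ to be the linear span of the distributions $\delta_T$ dual to the $w_T$ (equivalently, the products of the primitives $\delta_{C'_{ij}},\delta_{D'_{ij}}$), a subobject of dimension $\le 2^{2n^2}$.

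\textbf{Conclusion and the hard part.} It remains to prove $\Dist(G)=\Dist(G_*)\cdot W$. The natural approach is induction on the order filtration $\Dist(G)=\bigcup_n(\mathcal{O}(G)/I^{n})^*$: given $v$ of order $n$, subtract a $\Dist(G_*)$-linear combination of the $\delta_T$ that kills the class of $v$ in $\Dist(G)\otimes_{\Dist(G_*)}k$, and check the remainder has order $<n$. I expect this last check to be the main obstacle, precisely because $\mathcal{O}(G)$ is non-commutative — $[D_{ij},D_{k\ell}]=D'_{ij}D'_{k\ell}$ and so on — so the pieces of the order filtration are not $\Dist(G_*)$-submodules and one must verify that $\Dist(G_*)^+\cdot\Dist(G)$ meets each filtration step in the expected way. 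The cleanest route is to upgrade the module decomposition $\mathcal{O}(G)=\sum_T R_0\,w_T$ to a genuine tensor decomposition of the formal group, $\widehat{G}_{\,e}\cong\widehat{(G_*)}_e\times\Spec\Lambda$ with $\Lambda$ the finite square-zero algebra on the $C'_{ij},D'_{ij}$; this requires producing a \emph{commuting} lift of the reduced coordinates into $\mathcal{O}(G)^\wedge$, an obstruction living in the nilpotent ideal that should be killed by a direct computation. Granting it, $\Dist(G)\cong\Dist(G_*)\otimes\Lambda^\vee$ as left $\Dist(G_*)$-modules, free of rank $\dim\Lambda\le 2^{2n^2}$, hence finitely generated. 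Alternatively, finite-dimensionality of $\Dist(G)\otimes_{\Dist(G_*)}k$ — immediate from the $\delta_T$ — combined with faithful flatness of a cocommutative Hopf algebra over a Hopf subalgebra (the theory of relative Hopf modules), adapted to $\Ver_4^+$, yields the same conclusion.
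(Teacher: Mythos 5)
You have the right finite generating set (the distributions dual to the squarefree monomials in the primed coordinates $C'_{ij},D'_{ij}$ — essentially the paper's $\delta_{s(C',D')}$), but you never prove that it generates: the step you yourself flag as ``the main obstacle'' is exactly the content of the lemma, and your two proposed ways around it are both left as unverified assertions. The formal-group route is explicitly conditional (``Granting it\dots''): you would need to produce the commuting lift and check the claimed splitting $\widehat{G}_e\cong\widehat{(G_*)}_e\times\Spec\Lambda$, which is not done. The faithful-flatness route has two gaps: faithful flatness (or freeness) of a cocommutative Hopf algebra over a Hopf subalgebra is a theorem about ordinary Hopf algebras and would have to be established in $\Ver_4^+$; and even granting it, ``finite-dimensional fiber $\Dist(G)\otimes_{\Dist(G_*)}k$ plus faithful flatness implies finite generation'' is a Nakayama-type deduction that does not hold automatically for modules over an infinite-dimensional ring — moreover, showing the $\delta_T$ even span that fiber already requires the same kind of computation you are trying to avoid.

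The irony is that the obstacle you anticipate is not actually there, and the induction you abandon is how the paper concludes. One does not need the order-filtration pieces to be $\Dist(G_*)$-submodules. Writing a monomial $p=p_1(C-1,D)\,p_2(C',D')$, the product of distributions is computed through the coproduct, and the standard estimate (cf.\ Jantzen 7.7)
\begin{equation*}
\Delta(f_1\cdots f_m)\in\prod_{i=1}^m(1\otimes f_i+f_i\otimes 1)+\sum_{r=1}^m I^r\otimes I^{m+1-r},\qquad f_i\in I,
\end{equation*}
which is insensitive to the braided commutativity you worry about, gives
$\delta_{p_1(C-1,D)}\,\delta_{p_2(C',D')}=\delta_p+\mu$ with $\mu$ a combination of $\delta_q$ for monomials $q$ of strictly smaller degree. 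Since $\delta_{p_1(C-1,D)}\in\Dist(G_*)$ and $\delta_{p_2(C',D')}$ lies in your span $W$, induction on $\deg p$ shows $\Dist(G)=\Dist(G_*)\cdot W$, and finiteness of $W$ follows because the primed generators square to zero. Your proposal stops exactly where this one-paragraph argument is needed, so as written it does not establish the lemma.
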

\begin{proof}
Let $\mcal{O}(\GL(nP)) = k[C_{ij}, D_{ij}, C'_{ij}, D'_{ij}]$, where $C_{ij}$ and $D_{ij}$ correspond to matrix entries in an element $\begin{pmatrix} C & C' \\ D & C + D' \end{pmatrix}$ and $C'_{ij} = dC_{ij}$, $D'_{ij}, = dD_{ij}$, $1 \le i, j \le n$. The inclusion $G_* \subset G$ induces a map $\Dist(G_*) \inj \Dist(G)$ whose image is the elements of $\Dist(G)$ vanishing on $N$.

The augmentation ideal $I \subset \mcal{O}(\GL(nP))$ is generated by $C_{ij} - \delta_{ij}$ where $\delta$ is the Kronecker delta, and $D_{ij}, C'_{ij}, D'_{ij}$ for $1 \le i, j \le n$. For ease of notation, let $C - 1 := \{C_{ij} - \delta_{ij}\}$, $D := \{D_{ij}\}$, $C' := \{C'_{ij}\}$, $D' := \{D'_{ij}\}$. 
Then, the residue classes of monomials in $C - 1$, $D$, $C'$, $D'$ of degree at most $m$ form a basis for $\mcal{O}(G)/I^{m+1}$ and for a fixed monomial $p(C - 1, D, C', D')$, there is a unique $\delta_p \in \Dist(G)$ such that

\begin{equation*}
    \delta_p(I^{m+1}) = 0, \delta_p(p) = 1, \delta_p(q) = 0
\end{equation*}
when $q \ne p$ is another monomial in $C - 1, D, C', D'$ of degree at most $m$. The $\delta_p$ over all monomials $p$ of any nonnegative degree form a basis of $\Dist(G)$.
We claim that as a $\Dist(G_*)$-module, $\Dist(G)$ is generated by $\delta_{s(C', D')}$ where $s$ runs over all monomials in $C'_{ij}, D'_{ij}$. Let $V \subset \Dist(G)$ be the $\Dist(G_*)$-module generated by such $\delta_{s(C', D')}$. We induct on the degree of $p$ to show that $\delta_p$ lies in $V$. The base case, $\deg p = 1$, is clear. If $\deg p \ge 1$, write $p = p_1(C - 1, D)p_2(C', D')$. Recall (e.g. \cite{jantzen_algebraic_groups_2003} 7.7) that for $f_1, \dots, f_n \in I$,
\begin{equation*}
    \Delta(f_1 f_2 \cdots f_n) \in \prod_{i = 1}^n (1 \otimes f_i + f_i \otimes 1) + \sum_{r = 1}^n I^r \otimes I^{n + 1 - r}.
\end{equation*}
Therefore,
\begin{equation*}
    \delta_{p_1(C - 1, D)} \delta_{p_2(C', D')} = \delta_{p} + \mu, \mu \in \text{span}(\delta_q)_{\deg q < m}.
\end{equation*}
By induction, $\mu \in V$; $\delta_{p_1(C - 1, D)} \in \Dist(G_*)$; and $\delta_{p_2(C', D')} \in V$ by definition. Hence, $\delta_p \in V$ as well.

Since $C'_{ij}$ and $D'_{ij}$ square to 0, there are only finitely many monomials in $C'_{ij}, D'_{ij}$. Therefore, $\Dist(G)$ is finitely generated as a $\Dist(G_*)$-module.
\end{proof}

\begin{corollary}
    The irreducible $\GL(nP)$-representations are labeled by $n$-tuples of irreducible $\GL(P)$ representations $(L_1, \dots, L_n)$ where $\deg L_1 \ge \cdots \ge \deg L_n$.
\end{corollary}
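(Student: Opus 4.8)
The plan is to assemble three ingredients already in place: the general highest-weight machinery of the ``Highest Weights'' subsection, the degree argument recorded just before the lemmas of this subsection, and the two lemmas above. First I would apply the highest-weight theorem to $G=\GL(nP)$ with the cocharacter $f\colon\mathbb G_m\to G$ sending $t$ to $\diag(t^{c_1}I_P,\dots,t^{c_n}I_P)$ for strictly decreasing integers $c_1>\dots>c_n$. Rescaling the blocks commutes with the diagonal action of $\pi_1(\mcal{C})$ on $nP$ (which acts by the same unipotent matrix on every copy of $P$), so $f$ lands in the centralizer of $\phi(\pi_1(\mcal{C}))$; moreover $Z(f)$ is exactly the block-diagonal subgroup $\prod_{i=1}^n\GL(P)$ and $B_+=Z(f)\ltimes N_+$ is the block upper-triangular subgroup $B$ considered above. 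The theorem then gives an injection $\hw_f\colon\Irrep(G,\phi)\hookrightarrow\Irrep\big(\prod_{i}\GL(P),\phi\big)$, whose target is canonically the set of $n$-tuples $(L_1,\dots,L_n)$ of irreducible $\GL(P)$-modules (external tensor products). Thus irreducible $\GL(nP)$-representations are labeled by \emph{some} collection of such tuples, and the task is to identify that collection.

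Next I would establish the two inclusions. For necessity: if $L$ is a genuine (finite-length) irreducible with $\hw_f(L)=L_1\boxtimes\dots\boxtimes L_n$, then restricting to the rank-$n$ torus inside $\GL(nP)$ that rescales the individual copies of $P$, this highest weight carries torus weight $(\deg L_1,\dots,\deg L_n)$; since the block root subgroups act locally nilpotently on the finite-dimensional $L$, the same degree/string argument as in the classical $\GL(n)$ case, applied to each adjacent pair of blocks, forces $\deg L_1\ge\dots\ge\deg L_n$. For sufficiency: given a dominant tuple, form the Verma module $\Dist(G)\otimes_{\Dist(B_+)}V$ attached to the irreducible $B_+$-module $V$ labeled by $(L_1,\dots,L_n)$; by the universal property in the highest-weight theorem it has a unique irreducible quotient $L$ with $\hw_f(L)=(L_1,\dots,L_n)$. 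The only remaining point is that $L$ must be an honest object of $\mcal{C}$ (of finite length) rather than merely an ind-object, and this is exactly the finite-dimensionality lemma above, which in turn rests on Lemma~\ref{dist_g_fg}. Combining the inclusions, $\hw_f$ is a bijection onto the dominant tuples, which is the claim.

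The crux, and the only place where something genuinely has to be proved rather than merely assembled from what precedes, is sufficiency: the highest-weight construction by itself only produces an ind-object, so to get finite length one needs the structural input of Lemma~\ref{dist_g_fg} — that $\Dist(\GL(nP))$ is finitely generated over $\Dist(G_*)$ for $G_*=\GL(n)\ltimes\mfrak{gl}(n)$, proved via the reduction modulo the nilradical and the nilpotence of the coordinates $C'_{ij},D'_{ij}$ — so that $\ind_{G_*}^{G}$ carries finite-dimensional modules to finite-dimensional modules. Everything else (the choice of $f$, the identification of $Z(f)$ and $B_+$, and the dominance bookkeeping) follows routinely from the earlier results.
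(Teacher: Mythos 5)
Your proposal is correct and follows essentially the same route as the paper: the block upper-triangular highest-weight machinery gives the injection into $n$-tuples of irreducible $\GL(P)$-modules, the classical $\GL(n)$-style degree argument gives necessity of $\deg L_1\ge\cdots\ge\deg L_n$, and sufficiency reduces to the finite-dimensionality lemma proved via induction from $G_*=\GL(n)\ltimes\mfrak{gl}(n)$ together with Lemma~\ref{dist_g_fg}. You correctly identify that lemma as the only nontrivial input; the rest is assembly.
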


\subsubsection{An alternative construction of representations of $\GL(nP)$}

In this section, we describe another approach to constructing representations of $\GL(nP)$.

Recall that multiplication is an isomorphism $\GL(nP) \cong M_n \times H_n$ as schemes, where $H_n(A)$ is the group of $n \times n$ invertible matrices over $A$ under multiplication and $M_n(A)$ is the group of all $n \times n$ matrices with operation $(X, Y) \mapsto X + Y + X'Y$.
\begin{proposition}
    Representations of $H_n$, like for $\GL(n)$, correspond to $n$-tuples of integers $\lambda_1 \ge \cdots \ge \lambda_n$.
\end{proposition}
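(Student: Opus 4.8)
The plan is to run the same highest-weight argument already used for $\GL(n)$ and $\GL(nP)$, with the diagonal torus $\mathbb G_m^n$ of $\GL(n)$ replaced by the ``non-reduced torus'' $T = (\mathbb G_m')^n$ of invertible diagonal matrices in $H_n$. First I would record the structural input: inside $H_n$ one has $T$, the subgroups $N_+$ and $N_-$ of upper- and lower-unitriangular matrices — whose root subgroups are copies of $\mathbb G_a'$, since the off-diagonal entry of an element of $H_n(A)$ ranges over all of $A$ — and the Borel $B = T N_+$. As in the proof of the highest-weight theorem and of Corollary~\ref{glm+np_lu}, multiplication $N_- \times T \times N_+ \to H_n$ is an open immersion, which yields a PBW factorization $\Dist(H_n) \cong \Dist(N_-) \otimes \Dist(T) \otimes \Dist(N_+)$ compatible with the weight grading attached to a regular cocharacter $f \colon \mathbb G_m \to H_n$ with centralizer $Z(f) = T$.

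Applying the highest-weight theorem to this $f$, the map $L \mapsto \hw_f(L)$ embeds the irreducible representations of $H_n$ into the irreducible representations of $T = (\mathbb G_m')^n$; by the classification of irreducible $\mathbb G_m'$-modules (one module $L_\lambda$ for each $\lambda \in \mbb{Z}$), the latter are labeled by $n$-tuples of integers $(\lambda_1, \dots, \lambda_n)$. The same ``degree argument'' used in the $\GL(n)$ and $\GL(nP)$ cases then forces dominance: a highest-weight vector generates the module under $\Dist(N_-)$, and the rank-two Levi attached to the simple root $e_i - e_{i+1}$ acts on it as a finite-dimensional highest-weight module, which is impossible unless $\lambda_i \ge \lambda_{i+1}$. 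So only dominant $n$-tuples $\lambda_1 \ge \cdots \ge \lambda_n$ can occur.

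For the converse — that every dominant weight is realized by a finite-dimensional irreducible — I would mimic the finite-dimensionality lemma for $\GL(nP)$ together with Lemma~\ref{dist_g_fg}. The coordinate ring $\mcal{O}(H_n)$ is generated by the matrix coordinates $X_{ij}$, their derivatives $X'_{ij} = dX_{ij}$, and an inverse to the determinant; its nilradical is the ideal generated by the $X'_{ij}$, and the quotient is $k[X_{ij}, \det^{-1}] = \mcal{O}(\GL(n))$, so the reduced subgroup $H_{n,\ast}$ of $H_n$ is the ordinary general linear group $\GL(n)$. Exactly as in Lemma~\ref{dist_g_fg} — using that each $X'_{ij}$ squares to zero, so there are only finitely many monomials in the $X'_{ij}$ — one shows $\Dist(H_n)$ is finitely generated as a $\Dist(\GL(n))$-module. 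Consequently, for a dominant $\lambda$ with associated irreducible $\GL(n)$-module $V_\lambda$, the module $\ind_{\GL(n)}^{H_n} V_\lambda$ is finite-dimensional and contains the irreducible $H_n$-module of highest weight $\lambda$; combined with the previous paragraph this gives a bijection between irreducible $H_n$-modules and dominant $n$-tuples, i.e.\ exactly the $\GL(n)$ classification. (Alternatively, sufficiency follows from part~3 of the highest-weight theorem once one checks that $nP$ and $(nP)^{\ast} = nP$ are irreducible $H_n$-modules with highest weights $(1,0,\dots,0)$ and $(0,\dots,0,-1)$, since every dominant weight occurs among the composition factors of a tensor product of these two.)

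The main obstacle is the finite-dimensionality step, i.e.\ the statement that $\Dist(H_n)$ is finite over $\Dist(\GL(n))$: this is where the defining feature of $\Ver_4^+$ — the derivation squaring to zero, so that the ``extra'' generators of the coordinate ring are nilpotent of bounded order — does the real work, while the rest is a routine transcription of the $\GL(n)$ story. A minor point to keep in mind is that the root subgroups of $H_n$ are copies of $\mathbb G_a'$ rather than $\mathbb G_a$, but since $\mathbb G_a'$ has only the trivial irreducible representation this does not affect the highest-weight argument (a highest-weight vector is still annihilated by $N_+$).
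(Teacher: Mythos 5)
Your overall architecture (highest-weight theory for the Borel of upper triangular matrices in $H_n$, giving necessity of $\lambda_1\ge\cdots\ge\lambda_n$, plus a separate existence argument for each dominant weight) matches the paper, and the necessity half is essentially what the paper means by ``the same argument as for $\GL(n)$.'' Where you genuinely diverge is the existence half. The paper does \emph{not} argue via distribution algebras here: it exhibits, for each irreducible polynomial $\GL(n)$-module $V$ with matrix coefficients $p_{i'j'}$, an explicit nonzero $\GL(n)$-equivariant map $V\to\mcal{O}(H_n)$, $v_{j'}\mapsto p_{1,j'}(X'_{ij})$, using that the span of the $X'_{ij}$ transforms under right translation by $\GL(n)\subset H_n$ (the kernel-of-$d$ subgroup) exactly like the matrix coordinates, since the entries of elements of $\GL(n)$ are killed by $d$. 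This at once produces an $H_n$-submodule of $\mcal{O}(H_n)$ with the prescribed highest weight, so existence is certified directly. You instead transplant the strategy of the finite-dimensionality lemma for $\GL(nP)$: identify the reduced subgroup of $H_n$ with $\GL(n)$, prove finite generation of $\Dist(H_n)$ over $\Dist(\GL(n))$ as in Lemma~\ref{dist_g_fg}, and induce $V_\lambda$ up. That transplant is sound as far as it goes (the monomial argument carries over verbatim, and in fact finiteness is even easier here, since $\mcal{O}(H_n)\cong\mcal{O}(\GL(n))\otimes k[X'_{ij}]$ as right $\GL(n)$-comodules), and it buys a uniform treatment parallel to the $\GL(nP)$ case, whereas the paper's construction buys an explicit model of the representation inside $\mcal{O}(H_n)$.

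The one load-bearing step you assert rather than prove is that $\ind_{\GL(n)}^{H_n}V_\lambda$ ``contains the irreducible $H_n$-module of highest weight $\lambda$'': this is precisely the existence statement at issue, so as written the argument is circular unless you add a reason why an irreducible constituent of the induced module has highest weight exactly $\lambda$ (for instance, a Frobenius-reciprocity plus highest-weight-vector argument identifying $\lambda$ in the socle, or simply the paper's explicit invariant, which settles it immediately). Your parenthetical alternative via part~3 of the highest-weight theorem has the same status: that every dominant tuple occurs among composition factors of mixed tensor powers of $nP$ is plausible but needs an argument. Filling in either of these would complete your route; with the explicit invariant it collapses into the paper's proof.
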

\begin{proof}
    Inside $H_n$ sits the group of $n \times n$ upper triangular invertible matrices, which we call $B$; irreducible representations of $B$ correspond to $n$-tuples of irreducible representations of $H_1$, which correspond to the integers. There is at most one representation of each $B$-weight, and the same argument as for $\GL(n)$ implies that $(\lambda_1, \dots, \lambda_n)$ only corresponds to an irreducible $H_n$-representation if the sequence is nonincreasing.

    
    To construct a representation of each weight, note that $\GL(n) \subset H_n$ as the subgroup of invertible matrices in the kernel of $d$. Let $\mcal{O}(H_n) = k[X_{ij}, X'_{ij}, \det^{-1}]$, where $X_{ij}$ is the function taking an invertible $n \times n$ matrix to its $ij$th entry, and $X'_{ij} = dX_{ij}$. We claim that for every irreducible (polynomial) $\GL(n)$-representation $V$, $\Hom_{\GL(n)}(V, \mathcal{O}(H_n))$ is nonzero. If $(a_{ij}) \in \GL(n)$ acts on $V$ via the matrix $p_{i'j'}(a_{ij})$ where $1 \le i', j' \le \dim V$ and $p$ is some polynomial, then the map $v_{j'} \mapsto p_{1, j'}(X'_{ij})$ is a $\GL(n)$-invariant map. Since $V$ is a $\GL(n)$-representation and since the matrix entries in elements of $\GL(n)$ are in the kernel of $d$, the $\GL(n)$ action on $p_{1, j'}(X'_{ij})$ is $p_{1, j'}(\sum_k X'_{ik} \otimes e_{kj}) = \sum_{k'} p_{1, k'}(X'_{ij}) \otimes p_{k', j'}(e_{ij}) $. 
    Then there exists at least one $H_n$-representation with the same weight as $V$, so there is exactly one for each partition.
\end{proof}

\begin{proposition}
    Representations of $M_n$ correspond to $n$-tuples of nonnegative integers $\lambda_1 \ge \cdots \ge \lambda_n$.
\end{proposition}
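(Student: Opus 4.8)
The plan is to cut $M_n$ down to a finite group scheme over $k$ and then recognise it. Entrywise application of $d$ defines a morphism $\phi\colon M_n\to H_n$ by $\phi(X)=\1+X'$: since each $X_{ij}'$ satisfies $(X_{ij}')'=0$ one gets $(X'Y)'=X'Y'$, hence $\phi(X*Y)=\1+X'+Y'+X'Y'=(\1+X')(\1+Y')=\phi(X)\phi(Y)$, and $\1+X'$ is invertible because the matrix $X'$ has all entries in $\im d$ and is therefore nilpotent. The kernel of $\phi$ is $\{X:dX=0\}$, on which $*$ degenerates to $+$; thus $\ker\phi\cong(\mathbb{G}_a)^{n^2}$ is an ordinary unipotent group scheme, so — exactly as was used for $\mathbb{G}_a'$ and $M_1$ — it has only the trivial irreducible representation. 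As $\ker\phi$ is normal, $V^{\ker\phi}$ is an $M_n$-subrepresentation for any $V$, so every irreducible representation of $M_n$ has $\ker\phi$ acting trivially and factors through $U_n:=\phi(M_n)$. Note that the copy of $\pi_1$ inside $M_n$ (acting on $nP$ through scalar matrices) lies in $\ker\phi$, so these irreducibles all carry the trivial $\pi_1$-action and live in $\mathrm{Vec}$; the classification thereby becomes a classical question.

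Next I would identify $U_n$. Its coordinate ring is the image of $\phi^*\colon\mathcal{O}(H_n)\to\mathcal{O}(M_n)=k[X_{ij},X_{ij}']$, namely the subalgebra $k[X_{ij}']/\big((X_{ij}')^2\big)$ generated by the $X_{ij}'$, with comultiplication making the matrix $\1+(X_{ij}')$ grouplike. As all $X_{ij}'$ lie in $\ker d$ this is an ordinary commutative Hopf algebra over $k$, and it is precisely the coordinate ring of the first Frobenius kernel $\GL(n)^{(1)}$ of $\GL(n)$ in characteristic $2$ (one obtains $\mathcal{O}(\GL(n)^{(1)})$ from $\mathcal{O}(\GL(n))$ by killing the squares of the augmentation ideal). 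Hence $U_n\cong\GL(n)^{(1)}$ and $\Irrep(M_n)=\Irrep(\GL(n)^{(1)})$.

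Now I would invoke the classification of simple modules for Frobenius kernels (see \cite{jantzen_algebraic_groups_2003}): the simple $\GL(n)^{(1)}$-modules are labelled by the $2$-restricted dominant weights of $\GL(n)$, i.e.\ by $n$-tuples $\lambda_1\ge\cdots\ge\lambda_n\ge0$ with $\lambda_i-\lambda_{i+1}\le1$ — equivalently, setting $\epsilon_i:=\lambda_i-\lambda_{i+1}\in\{0,1\}$ (with $\epsilon_n:=\lambda_n$), by $n$-tuples of irreducible $M_1$-modules. This matches the indexing of the irreducibles of the upper-triangular Borel $B\subset M_n$ (whose strictly-upper part is unipotent, so $\Irrep(B)=\Irrep(M_1^n)$), which is the input one wants in order to combine this with the companion statement for $H_n$ and recover the irreducibles of $\GL(nP)\cong M_n\times H_n$ as tuples of $\GL(P)$-irreducibles. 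For a proof not quoting Frobenius-kernel theory one would instead use a Gauss decomposition of $M_n$ as lower-unipotent times $B$ to get an injection $\Irrep(M_n)\hookrightarrow\Irrep(B)$, and then obtain surjectivity by producing, for each $B$-weight, a $B$-semiinvariant in $\mathcal{O}(M_n)$ out of the minors of the grouplike matrix $\1+X'$.

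The main obstacle is exactly this surjectivity. In contrast to $H_n$ and $\GL(m)$, where only the dominant Borel-weights extend to the whole group, here \emph{every} weight of $B$ extends, so $\Irrep(M_n)\hookrightarrow\Irrep(B)$ is in fact a bijection; the clean way to see this is the identification with $\GL(n)^{(1)}$ together with the known count $|\Irrep(\GL(n)^{(1)})|=2^n=|\Irrep(B)|$, while the hands-on alternative requires constructing all $2^n$ semiinvariants explicitly. The only remaining point is bookkeeping: translating between the bit-string labels coming from $B$ and the ``nonincreasing nonnegative $n$-tuple'' formulation via partial sums (with the $2$-restriction $\lambda_i-\lambda_{i+1}\le1$ implicit), so that the correspondence is genuinely bijective and parallels the $H_n$ and $\GL(m)$ statements.
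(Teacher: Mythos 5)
Your argument is correct, and at its core it is the paper's argument seen from the quotient side rather than the induction side: the paper induces the trivial module from $\End_n=\ker\phi$, obtains $k[X_{ij}']$, and observes that $M_n$ acts through left multiplication by the grouplike matrix $\1+X'$ --- which is exactly your homomorphism $\phi$. What you add is the identification of the image with the first Frobenius kernel of $\GL(n)$ (usually written $\GL(n)_1$; the notation $\GL(n)^{(1)}$ normally denotes the Frobenius twist) and the resulting count of $2^n$ simples. This extra precision matters, because the statement as given, and the paper's conclusion that the irreducibles are ``exactly the polynomial $\GL(n)$-representations,'' cannot be read as a bijection with all partitions: the pullback of $L(\lambda)$ along $X\mapsto \1+X'$ is its restriction to the Frobenius kernel, which for non-$2$-restricted $\lambda$ is generally not irreducible (e.g.\ $L(2,0,\dots,0)$ pulls back to a trivial $n$-dimensional module) and which identifies weights congruent modulo $2X(T)$. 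Your count of $2^n$ irreducibles, indexed by $2$-restricted dominant weights, is the one consistent with the paper's earlier assertion that $M_1$ has exactly two irreducible representations, and with the subsequent corollary, which uses only the parities $\xi^{\lambda_i}$. One small point to patch in your write-up: to see that $\1+X'$ is invertible (equivalently that $X'$ is nilpotent as a matrix) you should use that the entries of $X'$ lie in $\im d$, hence commute with one another and square to zero, so that a sufficiently high power of the matrix vanishes; nilpotency of the individual entries alone would not suffice.
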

\begin{proof}
    Let $\End_n$ be the affine group defined by
    \begin{equation*}
        \End_n(A) = \{X \in \Mat_{n \times n}(A), X' = 0\} \text { under addition}.
    \end{equation*}
    Then $\End_n \subset M_n$.
    
    Hence, all irreducible $M_n$-representations are subrepresentations of the induced representation $(\1 \otimes \mcal{O}(M_n))^{\End_n}$; this space consists of elements of $\mcal{O}(M_n)$ where $\End_n$ acts trivially. Let $\mcal{O}(M_n) = k[X_{ij}, X_{ij}']$, where $X_{ij}$ corresponds to the $ij$th matrix entry of $X \in \Mat_n$, with coproduct
    \begin{equation*}
        \Delta(X_{ij}) = X_{ij} \otimes 1 + 1 \otimes X_{ij} + \sum_{k = 1}^n X_{ik}' \otimes X_{kj}.
    \end{equation*}
    Elements in $\End_n$ are their own inverses, so the $\End_n$-action of $Y \in \End_n(A)$ takes a matrix $X$ to $X + Y + X'Y$ and $X'$ to $X'$. Thus
    \begin{equation*}
        (\1 \otimes \mcal{O}(M_n))^{\End_n} \cong k[X_{ij}']
    \end{equation*}
    and we can rewrite $k[X_{ij}']$ as $k[1 + X_{ii}', X_{ij}']$ with $i \ne j$. Notice that the $M_n$-action of $Y \in M_n(A)$ by left multiplication on this representation sends $1 + X'$ to $(1 + Y')(1 + X')$. Hence the irreducible $M_n$-subrepresentations of $\ind_{\End_n}^{M_n} \1$ are exactly the polynomial $\GL(n)$-representations: given such a $\GL(n)$-representation $V$, the corresponding $M_n$-representation is the pullback along the map $M_n \to \GL(n)$ sending $X \mapsto I + X'$ where $I$ is the identity matrix.
\end{proof}

\begin{corollary}
    Let $L_H(\lambda_1, \dots, \lambda_n)$ be the irreducible $H_n$-representation corresponding to the $n$-tuple of integers $(\lambda_1, \dots, \lambda_n)$. Then the irreducible $\GL(nP)$-representation with highest weight $(T_{\lambda_1}, \dots, T_{\lambda_n})$, with $T_{\lambda} = \chi^{\lambda/4}$ when $\lambda \equiv 0 \pmod{4}$, is a submodule of \newline $\ind_{H_n}^{\GL(nP)} L_H(\lambda_1, \dots, \lambda_n)$. 
    \end{corollary}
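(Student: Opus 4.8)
The plan is to obtain the embedding from Frobenius reciprocity together with the behaviour of the irreducible $\GL(P)$-modules under restriction to $H_1=\mathbb G'_m$. Since $\ind_{H_n}^{\GL(nP)}$ is right adjoint to restriction,
\begin{align*}
&\Hom_{\GL(nP)}\big(L(T_{\lambda_1},\dots,T_{\lambda_n}),\ \ind_{H_n}^{\GL(nP)}L_H(\lambda_1,\dots,\lambda_n)\big)\\
&\qquad\cong\ \Hom_{H_n}\big(\on{Res}_{H_n}L(T_{\lambda_1},\dots,T_{\lambda_n}),\ L_H(\lambda_1,\dots,\lambda_n)\big),
\end{align*}
and because $L(T_{\lambda_1},\dots,T_{\lambda_n})$ is irreducible, any nonzero element of the left-hand side is automatically injective. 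So it is enough to produce a nonzero $H_n$-homomorphism $\on{Res}_{H_n}L(T_{\lambda_1},\dots,T_{\lambda_n})\to L_H(\lambda_1,\dots,\lambda_n)$; equivalently, to show that $L_H(\lambda_1,\dots,\lambda_n)$ lies in the head of $\on{Res}_{H_n}L(T_{\lambda_1},\dots,T_{\lambda_n})$.

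To control this head I would use the grading shared by the two groups. Let $f\colon\mathbb G_m\to\GL(nP)$ send $t$ to the automorphism acting by $t^{\,n-i}$ on the $i$-th copy of $P$; its image is block-scalar, hence lies in $\ker d$, so $f$ factors through $H_n$, with $Z_{\GL(nP)}(f)=\GL(P)^{\times n}$ (the block-diagonal subgroup) and $Z_{H_n}(f)=H_1^{\times n}$. By the highest weight theorem, the top $f$-eigenspace of $L(T_{\lambda_1},\dots,T_{\lambda_n})$ is $W:=T_{\lambda_1}\boxtimes\cdots\boxtimes T_{\lambda_n}$ as a $\GL(P)^{\times n}$-module; being the top graded piece it is annihilated by the strictly block-upper unipotent, in particular by its intersection with $H_n$, so $W$ is a $B_{H,+}$-submodule of $\on{Res}_{H_n}L(T_{\lambda_1},\dots,T_{\lambda_n})$ (here $B_{H,+}=H_1^{\times n}\ltimes U_{H,+}$ is the Borel of $H_n$, acting on $W$ through the restriction of the $\GL(P)^{\times n}$-action with $U_{H,+}$ trivial). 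By the explicit description obtained earlier of the restrictions of the irreducible $\GL(P)$-modules to $H_1$ — each $\on{Res}_{H_1}T_{\lambda_i}$, with $T_{\lambda_i}:=\chi^{\lambda_i/4}$ when $4\mid\lambda_i$, contains the irreducible $H_1$-module $L_{\lambda_i}$ of degree $\lambda_i$ both as a submodule and as a quotient — the external product $V_0:=L_{\lambda_1}\boxtimes\cdots\boxtimes L_{\lambda_n}$ is both a $B_{H,+}$-submodule and a $B_{H,+}$-quotient of $\on{Res}_{B_{H,+}}W$; and $V_0$ is precisely the top $f$-eigenspace, i.e. the highest weight, of $L_H(\lambda_1,\dots,\lambda_n)$.

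The remaining step is to upgrade this from the Borel level to the level of $H_n$-modules, and this is where I expect the real work to lie. By the construction in the proof of the highest weight theorem, $L_H(\lambda_1,\dots,\lambda_n)$ is the unique irreducible quotient of the Verma module $\Dist(H_n)\otimes_{\Dist(B_{H,+})}V_0$, so the $B_{H,+}$-embedding $V_0\hookrightarrow W\subseteq\on{Res}_{H_n}L(T_{\lambda_1},\dots,T_{\lambda_n})$ gives, via the universal property, a nonzero $H_n$-homomorphism from this Verma module whose image is the $H_n$-submodule generated by $V_0$, and hence has $L_H(\lambda_1,\dots,\lambda_n)$ as its head. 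The delicate point — not formal, unlike the manipulations above — is to conclude that $L_H(\lambda_1,\dots,\lambda_n)$, rather than some irreducible $H_n$-module with strictly smaller $f$-highest weight, actually appears in the head of the entire module $\on{Res}_{H_n}L(T_{\lambda_1},\dots,T_{\lambda_n})$; equivalently, that the top $f$-eigenspace $W$ is not contained in its radical. I would attempt this by a graded argument (a maximal $H_n$-submodule whose simple quotient has strictly smaller top $f$-weight must swallow all of $W$, and one rules out these being all the maximal submodules), or dually by locating $L_H(\lambda_1,\dots,\lambda_n)^*$ in the socle of $\on{Res}_{H_n}\big(L(T_{\lambda_1},\dots,T_{\lambda_n})^*\big)$ using that $T_{\lambda_i}^*$ is $T_{-\lambda_i}$ up to a twist by $\xi$.
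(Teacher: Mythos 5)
Your reduction is set up correctly: with the paper's convention $\ind_{H_n}^{\GL(nP)}W=(W\otimes\mcal{O}(\GL(nP)))^{H_n}$, induction is right adjoint to restriction, so the corollary is equivalent to producing a nonzero $H_n$-map $\on{Res}_{H_n}L(T_{\lambda_1},\dots,T_{\lambda_n})\to L_H(\lambda_1,\dots,\lambda_n)$, and your weight analysis of the top $f$-eigenspace $W=T_{\lambda_1}\boxtimes\cdots\boxtimes T_{\lambda_n}$ and of the restrictions $\on{Res}_{H_1}T_{\lambda_i}$ is consistent with the paper. But the argument stops exactly where the content begins. What you actually establish is a nonzero map from the Verma module $\Dist(H_n)\otimes_{\Dist(B_{H,+})}V_0$ into $\on{Res}_{H_n}L(T_{\lambda_1},\dots,T_{\lambda_n})$, whose image (the $H_n$-submodule generated by $V_0$) has $L_H(\lambda)$ as its head; this shows only that $L_H(\lambda)$ is a \emph{subquotient} of the restriction, not a quotient of it. The passage from ``$V_0$ generates a submodule with head $L_H(\lambda)$'' to ``$L_H(\lambda)$ lies in the head of the whole restriction'' is precisely the claim you flag as delicate and never prove: a maximal $H_n$-submodule of the restriction could a priori contain all of $W$. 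Neither of the two strategies you sketch (the graded argument on maximal submodules, or dualizing to a socle statement) is executed, so the proof is incomplete at its essential step.

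The paper avoids this issue entirely by arguing on the coordinate ring rather than on the restriction of the irreducible. Using the scheme isomorphism $\GL(nP)\cong M_n\times H_n$, it identifies $\Hom_{H_n}(L_H(\lambda),\mcal{O}(\GL(nP)))\cong L_H(\lambda)^*\otimes\mcal{O}(M_n)\neq 0$, takes the invariant function $\varphi$ corresponding to $v^*\otimes 1$ for $v$ a highest weight vector, and verifies by an explicit degree count that the $\GL(nP)$-module generated by $F=\varphi(v)=\prod_i p_{\lambda_i}(A_{ii})+\cdots$ has maximal $A_{11}$-degree, hence highest $\GL(P)^n$-weight $(T_{\lambda_1},\dots,T_{\lambda_n})$. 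That construction pins down which irreducible occurs without ever needing to analyze the head of $\on{Res}_{H_n}L(T_{\lambda_1},\dots,T_{\lambda_n})$. To complete your route you would need to supply the missing head argument; alternatively, adopt the paper's explicit invariant-function construction.
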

    \begin{corollary}
    Let $L_M(\lambda_1, \dots, \lambda_n)$ be the irreducible $M_n$-representation corresponding to $(\lambda_1, \dots, \lambda_n)$. The $\GL(nP)$-representation with highest weight $(\xi^{\lambda_1}, \dots, \xi^{\lambda_n})$ is a submodule of $\ind_{M_n}^{\GL(nP)} L_M(\lambda_1, \dots, \lambda_n)$.
\end{corollary}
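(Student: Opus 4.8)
The plan is to imitate the proof of the preceding corollary for $H_n$, with the $n=1$ case — recorded in the Remark, where $\Hom_{M_1}(L_\xi,\mathcal O(\GL(P)))$ is shown to generate the character $\xi$, $\xi(a,b)=1+(ba^{-1})'$ — as the template. Two inputs are used. First, the scheme isomorphism $\GL(nP)\cong M_n\times H_n$, which gives $\mathcal O(\GL(nP))\cong\mathcal O(M_n)\otimes\mathcal O(H_n)$. Second, the description, from the proposition classifying $M_n$-representations, of $L_M(\lambda_1,\dots,\lambda_n)$ as the pullback of the polynomial $\GL(n)$-module $V_\lambda$ of highest weight $\lambda=(\lambda_1,\dots,\lambda_n)$ along $M_n\to\GL(n)$, $X\mapsto I+X'$. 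Inside $\ind_{M_n}^{\GL(nP)}L_M(\lambda)=\big(L_M(\lambda)\otimes\mathcal O(\GL(nP))\big)^{M_n}$ one has the ``matrix-coefficient'' copy of $L_M(\lambda)$, i.e.\ the image of $\id\in L_M(\lambda)\otimes L_M(\lambda)^*$ under the $M_n$-equivariant embedding of $L_M(\lambda)^*$ into $\mathcal O(M_n)\subseteq\mathcal O(\GL(nP))$ by matrix coefficients; concretely these are the matrix coefficients of $V_\lambda$ fed the matrix $I+C'$, where $C$ is the $M_n$-coordinate of a generic element $\begin{pmatrix}D&D'\\ E&D+E'\end{pmatrix}$. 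Let $W\subseteq\ind_{M_n}^{\GL(nP)}L_M(\lambda)$ be the $\GL(nP)$-submodule generated by this copy.

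I would then compute the highest weight of $W$ by restricting everything to the block-diagonal copy $\GL(P)^n\subseteq\GL(nP)$ — the centralizer of the one-parameter subgroup used to define highest weights in $\Ver_4^+$. The restriction reduces factor by factor to the $n=1$ situation of the Remark: the $i$-th factor $(a_i,b_i)$ of $\GL(P)$ enters the matrix coefficients of $V_\lambda$ through the character $\xi(a_i,b_i)=1+(b_ia_i^{-1})'$, so $V_\lambda$ is being evaluated at $\diag\big(\xi(a_1,b_1),\dots,\xi(a_n,b_n)\big)$, each $\xi$ a character of $\GL(P)$. Since the top-weight matrix coefficient of the $\GL(n)$-module $V_\lambda$ takes the value $t_1^{\lambda_1}\cdots t_n^{\lambda_n}$ at $\diag(t_1,\dots,t_n)$, the top graded component of $W$ transforms under $\GL(P)^n$ by the one-dimensional character $(\xi^{\lambda_1},\dots,\xi^{\lambda_n})$; by $\xi^2=\1$ only the parities $\lambda_i\bmod 2$ actually enter. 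Hence $W$ contains a highest weight vector of weight $(\xi^{\lambda_1},\dots,\xi^{\lambda_n})$, so $L(\xi^{\lambda_1},\dots,\xi^{\lambda_n})$ is its unique simple quotient and in particular a subquotient of $\ind_{M_n}^{\GL(nP)}L_M(\lambda)$.

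What remains — and this is where I expect the real work — is to upgrade ``subquotient'' to ``submodule''. By Frobenius reciprocity, $\Hom_{\GL(nP)}\big(L(\xi^{\lambda_1},\dots,\xi^{\lambda_n}),\,\ind_{M_n}^{\GL(nP)}L_M(\lambda)\big)\cong\Hom_{M_n}\big(\on{Res}_{M_n}L(\xi^{\lambda_1},\dots,\xi^{\lambda_n}),\,L_M(\lambda)\big)$, so the claim is equivalent to $L_M(\lambda)$ occurring as a quotient of $\on{Res}_{M_n}L(\xi^{\vec\lambda})$. For $n=1$ this is immediate because $\on{Res}_{M_1}\xi=L_\xi$ outright, by the proposition describing how the $\GL(P)$-irreducibles restrict to $M_1$. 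In general one must show $L_M(\lambda)$ lies in the head of $\on{Res}_{M_n}L(\xi^{\vec\lambda})$; the natural approach is to use the scheme decomposition once more — in the opposite order $\GL(nP)\cong H_n\times M_n$, right translation by $M_n$ affects only the $\mathcal O(M_n)$-tensor factor, which makes $\on{Res}_{M_n}$ of induced and regular modules transparent — together with the fact that the highest weight line of $L(\xi^{\vec\lambda})$ restricts to the top $M_1^n$-weight space of $L_M(\lambda)=V_\lambda|_{M_n}$, and a degree argument paralleling the $\GL(n)$ case to conclude that the $M_n$-submodule it generates is all of $L_M(\lambda)$ and is a quotient of the full restriction. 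Granting this, Frobenius reciprocity directly yields the desired embedding $L(\xi^{\lambda_1},\dots,\xi^{\lambda_n})\hookrightarrow\ind_{M_n}^{\GL(nP)}L_M(\lambda)$.
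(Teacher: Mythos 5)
Your construction is the same as the paper's: the ``matrix-coefficient copy'' of $L_M(\lambda)$ you describe is exactly the invariant function of the paper's remark, whose top coefficient is $F=\prod_i\bigl((1+(BA^{-1})')_{ii}\bigr)^{\lambda_i}$, and your identification of the highest $\GL(P)^n$-weight as $(\xi^{\lambda_1},\dots,\xi^{\lambda_n})$ via the character $\xi(a,b)=1+(ba^{-1})'$ is the paper's computation verbatim. The paper stops there: it takes the $\GL(nP)$-submodule of the induced module generated by $F$, observes it is a highest weight module of weight $(\xi^{\lambda_1},\dots,\xi^{\lambda_n})$, and identifies it with ``the representation with that highest weight,'' without separating head from socle. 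So the step you flag as ``the real work'' --- upgrading subquotient to submodule, equivalently showing via Frobenius reciprocity that $L_M(\lambda)$ occurs in the head of $\on{Res}_{M_n}L(\xi^{\lambda_1},\dots,\xi^{\lambda_n})$ --- is a legitimate concern, but it is not a step the paper's own argument carries out either; you have correctly reduced it to the right statement but left it as a sketch, so on a strict reading your proof (like the paper's) establishes the cyclic highest-weight submodule and the subquotient claim rather than the literal embedding of the irreducible. If you want to close it, the $n=1$ case you cite ($\on{Res}_{M_1}\xi=L_\xi$) together with restriction to the diagonal $M_1^n\subset M_n$ pins down the relevant weight space, but the passage from that to surjectivity onto $L_M(\lambda)$ still needs the argument you only gesture at.
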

\begin{remark}
We can explicitly construct the $\GL(nP)$-module with highest weight \newline $(T_{\lambda_1}, \dots, T_{\lambda_n})$ by finding an invariant function $\varphi \in \Hom_H(L_H(\lambda_1, \dots, \lambda_n), \mcal{O}(\GL(nP)))$ and considering the $\GL(nP)$-module generated by the image of $\varphi$.
Let $v \in L:= L_H(\lambda_1, \dots, \lambda_n)$ be a highest weight vector for the $(\mathbb G'_m)^n$ action. Define the polynomials
\begin{equation*}
    p_{\lambda_i}(x) = \begin{cases} x^{\lambda_i} & \lambda_i \equiv 0, 1 \pmod{4} \\
    x^{\lambda_i} + x^{\lambda_i - 1}x' & \lambda_i \equiv 2, 3 \pmod{4} \end{cases},
\end{equation*}
so an upper triangular matrix in $H_n$ with $(a_{11}, \dots, a_{nn})$ on the diagonal acts on $v$ by the eigenvalue $\prod_i p_{\lambda_i}(a_{ii})$. Because the $H_n$-action on $v$ generates $L$ and $\varphi$ is $H_n$-invariant, $\varphi$ is defined by $\varphi(v)$. Extending $v$ to a homogeneous basis of $L$, let $v^*$ be the dual basis element corresponding to $v$ in $L^*$. Recall that we have an identification 
\begin{equation*}
    \Hom_H(L, \mcal{O}(\GL(nP))) \cong L^* \otimes \mcal{O}(M_n).
\end{equation*}
We pick $\varphi$ to be the function corresponding to $v^* \otimes 1$. Hence for $w \in L$, $\varphi(w) \in \mcal{O}(GL(nP))$ sends $h$ to the $v$-coefficient of $h(w)$.
In particular,
\begin{equation*}
    \varphi(v) = F := \prod_i p_{\lambda_i}(A_{ii}) + \text{ terms involving } A_{ij}, i > j
\end{equation*}
and $F$ has degree $\sum_i \lambda_i$ as a polynomial in $A_{ij}, A_{ij}'$.

Then consider the $\GL(nP)$-representation $L$ generated by $F$ and let us determine its highest $\GL(P)^n$-weight. 
Let $(a_i, b_i)_{1 \le i \le n}$ be an element of $\GL(P)^n$. This element acts by
\begin{align*}
    A_{ij} &\mapsto A_{ij} \otimes a_j + A'_{ij} \otimes b_j, \\
    B_{ij} &\mapsto B_{ij} \otimes a_j + (A_{ij} + B'_{ij}) \otimes b_j,
\end{align*}
so the $\GL(P)^n$-action on $F$ is
\begin{equation*}
    F \mapsto \prod_i p_{\lambda_i}(A_{ij} \otimes a_j + A'_{ij} \otimes b_j) + \text{ terms with lower } a_1 \text { degree}.
\end{equation*}

Since $F$ has the highest $A_{11}$ degree among elements in $L$, it also has the highest $a_1$ degree for the $\GL(P)^n$-action. So $F$ generates the $\GL(nP)$-representation with highest weight $(T_{\lambda_1}, \dots, T_{\lambda_n})$.
\end{remark}
\begin{remark}
The $\GL(nP)$-module with highest weight $(\xi^{\lambda_1}, \dots, \xi^{\lambda_n})$ can be explicitly constructed by finding an invariant function in $\Hom(L_M(\lambda_1, \dots, \lambda_n), \mcal{O}(\GL(nP)))$ and considering the $\GL(nP)$-module generated by its image.
    The construction is the same as above but the the invariant function $F$ will be $\prod ((1 + (BA^{-1})')_{ii})^{\lambda_{i}}$, so the representation will have highest $\GL(P)^n$-weight $(\xi^{\lambda_1}, \dots, \xi^{\lambda_n})$.
\end{remark}

\subsubsection{The Steinberg tensor product theorem for $\GL(m + nP)$}

While the Frobenius map $a \mapsto a^2$ is not a homomorphism in $\Ver_4^+$, it is true that $a \mapsto a^4$ is a homomorphism. In particular, this induces a homomorphism $\Fr_{\text{odd}}: \GL(m + nP) \to \GL(n)$, acting on an element of $\GL(m+nP)(A)$ as
\begin{equation*}
    \begin{pmatrix}
        F & C & C' \\
        B' & D & D' \\
        B & E & D+E'
    \end{pmatrix} \mapsto D^{(2)}
\end{equation*}
where $D^{(2)}$ denotes the matrix in $\GL(n)(A)$ with entries $d_{ij}^4$. 

We also have a homomorphism $\Fr_{\text{even}}: \GL(m + nP) \to \GL(m)$ which sends an element of $\GL(m + nP)(A)$ as above to $F^{(1)}$ (which is the matrix with entries $f_{ij}^2$). 
\begin{proposition}
$\Fr_{\text{even}}$ is a homomorphism.
\end{proposition}
\begin{proof}
Let us compute the $ij$th matrix entry of
\begin{equation*}
    \Fr_{\text{even}}\left(\begin{pmatrix}
        F_1 & C_1 & C_1' \\
        B_1' & D_1 & D_1' \\
        B_1 & E_1 & D_1+E_1'
    \end{pmatrix} \begin{pmatrix}
        F_2 & C_2 & C_2' \\
        B_2' & D_2 & D_2' \\
        B_2 & E_2 & D_2+E_2'
    \end{pmatrix}\right) = (F_1F_2 + C_1 B_2' + C_1' B_2)^{(1)}.
\end{equation*}
Denote the matrix entries of $F_k$ as $f^k_{ij}$ (all of which are in the kernel of $d$), the matrix entries of $C_1$ as $c_{ij}$, and the matrix entries of $B_2$ as $b_{ij}$. Then 
\begin{equation*}
    (F_1F_2 + C_1 B_2' + C_1' B_2)^{(1)}_{ij} = \left(\sum_k f^1_{ik} f^2_{kj} + c_{ik} b_{kj}' + c'_{ik} b_{kj}\right)^2.
\end{equation*}
We have that each $f^1_{ik} f^2_{kj}$, being in $\ker d$, commutes with all other terms, while each $c_{ik} b_{kj}'$ and $c_{ik}' b_{kj}$ squares to 0. Note also that
\begin{equation*}
    c_{ik}b'_{kj}c_{ik'}b'_{k'j} + c_{ik'}b'_{k'j}c_{ik}b'_{kj} = c'_{ik'}c_{ik}'b'_{kj}b'_{k'j} = c'_{ik}b_{kj}c'_{ik'}b_{k'j} + c'_{ik'}b_{k'j} + c'_{ik}b_{kj}.
\end{equation*}
Likewise,
\begin{equation*}
    c_{ik}b'_{kj}c'_{ik'}b_{k'j} + c'_{ik'}b_{k'j}c_{ik}b'_{kj} = c'_{ik}b_{kj}c_{ik'}b'_{k'j} + c_{ik'}b'_{k'j}c'_{ik}b_{kj}.
\end{equation*}
Hence, all terms except those in $(F_1F_2)^{(1)}$ cancel, so
\begin{equation*}
    (F_1F_2 + C_1 B_2' + C_1' B_2)^{(1)} = (F_1 F_2)^{(1)}.
\end{equation*}
\end{proof}

Because both of these maps correspond to injective maps on their coordinate rings ($\Fr_{\text{odd}}$ sends matrix functions to their fourth powers, and $\Fr_{\text{even}}$ to their squares), they are surjections of schemes. Therefore, the pullback of an irreducible $\GL(m)$ or $\GL(n)$-module along one of these homomorphisms is an irreducible $\GL(m + nP)$-module. By checking highest weights, we have
\begin{align*}
    \Fr_{\text{even}}^*(L_{\GL(m)}(\lambda)) &= L(2\lambda, \1) \\
    \Fr_{\text{odd}}^* (L_{\GL(n)}(\lambda)) &= L(0, \chi^\mu)
\end{align*}
where $\chi^\mu = (\chi^{\mu_1}, \dots, \chi^{\mu_n})$.

\begin{remark}
    The terminology of an even and odd Frobenius comes from the following: for an object $X = V \oplus W \otimes P \in \Ver_4^+$, notice that $\Fr_{\text{odd}}$ corresponds to the functor (which is not a tensor functor) $X \mapsto W^{(2)}$, where $W^{(2)}$ is the second Frobenius twist of $W$, while $\Fr_{\text{even}}$ corresponds to the functor $X \mapsto V^{(1)} = X^{(1)}$, the usual Frobenius twist of $X$.
\end{remark}

We expect that a Steinberg tensor product theorem should hold for $\GL(m + nP)$ using both $\Fr_{\text{odd}}$ and $\Fr_{\text{even}}$. Say that a highest weight $\Lambda$ of $\GL(nP)$ is $4$-restricted if $\deg \Lambda_i - \deg \Lambda_{i + 1} < 4$. Then every highest weight of $\GL(nP)$ can be written as $\Lambda \cdot \chi^\mu = (\chi^{\mu_1} \otimes \Lambda_1, \dots, \chi^{\mu_n} \otimes \Lambda_n)$ where $\Lambda$ is $4$-restricted and $\mu = (\mu_1, \dots, \mu_n)$ is a highest $\GL(n)$ weight. Say that a highest weight $(\mu, \Lambda)$ of $\GL(m + nP)$ is restricted if $\mu$ is 2-restricted and $\Lambda$ is 4-restricted as a $\GL(nP)$-highest weight. Our conjecture states that every irreducible representation of $\GL(m + nP)$ can be written as a tensor product of $L(\mu, \Lambda)$ for $(\mu, \Lambda)$ restricted with Frobenius pullbacks of irreducible $\GL(m)$ and $\GL(n)$ modules.

\begin{conjecture}
    For $\lambda$ a 4-restricted $\GL(nP)$ weight, $\mu$ a 2-restricted $\GL(m)$-weight, $\nu$ a $\GL(n)$-weight, and $\rho$ a $\GL(m)$-weight, we have
\label{steinberg}
    \begin{equation*}
        L_{\GL(m + nP)}(\mu + 2\rho,  \lambda \cdot \chi^\nu) \cong L_{\GL(m + nP)}(\mu, \lambda) \otimes (\Fr_{\text{odd}})^* L_{\GL(n)}(\nu) \otimes (\Fr_{\text{even}})^* L_{\GL(m)}(\rho).
    \end{equation*}
\end{conjecture}
A Steinberg tensor product theorem for $\sVec$ was proved in \cite{kujawa_steinberg_2006} and one for $\Ver_p$ was proved in \cite{kannan_steinberg_2024}; we expect to prove Conjecture \ref{steinberg} in $\Ver_4^+$ with similar methods.

\printbibliography

\end{document}